\documentclass[a4paper, 10pt]{amsart}
\usepackage{etex}
\usepackage{a4wide}
\usepackage[utf8]{inputenc}
\usepackage[T1]{fontenc}
\usepackage{lmodern}
\usepackage{setspace}
\usepackage{layout}
\usepackage{setspace}
\usepackage[pdftex]{graphicx}
\usepackage{hyphsubst}%
\usepackage[english]{babel}
\usepackage{textcomp}
\usepackage{enumitem}
\usepackage{microtype}
\usepackage{amsthm}
\usepackage{amsmath}
\numberwithin{equation}{section}
\usepackage{amssymb}
\usepackage{mathrsfs}
\usepackage{mathtools}
\usepackage{MnSymbol} 
\usepackage[OT2,OT1]{fontenc}
\usepackage{pdfpages}
\usepackage{lipsum}
\newcommand\cyr
{
\renewcommand\rmdefault{wncyr}
\renewcommand\sfdefault{wncyss}
\renewcommand\encodingdefault{OT2}
\normalfont
\selectfont
}
\DeclareTextFontCommand{\textcyr}{\cyr}

\theoremstyle{plain}
\newtheorem{Lemma}{Lemma}
\newtheorem*{Theorem*}{Theorem}

\theoremstyle{definition}
\newtheorem{Definition}{Definition}
\theoremstyle{remark}

\newcommand{\Q}{\mathbb Q}
\newcommand{\N}{\mathbb N}
\newcommand{\F}{\mathbb F}
\newcommand{\Z}{\mathbb Z}

\newcommand{\mx}{\textup{max}}

\begin{document}

	\title{On Artin's Conjecture: Pairs of Additive Forms}
	\author{Miriam Sophie Kaesberg}
	
	\begin{abstract}
		It is established that for every pair of additive forms $f=\sum_{i=1}^s a_i x_i^k, g=\sum_{i=1}^s b_i x_i^k$ of degree $k$ in $s>2k^2$ variables the equations $f=g=0$ have a non-trivial $p$-adic solution for all odd primes.
	\end{abstract}
	\maketitle

\section{Introduction}
\let\thefootnote\relax\footnotetext{This result is part of the author's PhD thesis submitted to the Georg-August Universit\"at G\"ottingen on 6 November 2020.}
Let $k \ge 1$ be a natural number and $a_i$  and $b_i$ integer coefficient for $1 \le i \le s$. A special case of Artin's conjecture~\cite{artin} states that the pair of additive equations
\begin{align*}
	\sum_{i =1}^s a_i x_i^k= \sum_{i=1}^s b_i x_i^k = 0
\end{align*}
have a non-trivial $p$-adic solution for all primes $p$ provided that $s > 2k^2$.

Davenport and Lewis~\cite{twoaddeq} started to answer the question whether this statement is true by proving that $s > 2k^2$ variables are sufficient if $k$ is odd, whereas for even $k$ they only obtained the bound $s \ge 7k^3$.
Br\"udern and Godinho~\cite{BundG2} proved that the expected bound $s > 2k^2$ holds for even $k$ which are not of the shape
\begin{align*}
    k=3\cdot2^\tau \quad \textup{or} \quad k=p^\tau\left(p-1\right)
\end{align*}
for $p$ prime and $\tau \ge1$ as well. For each of these excluded shapes they proved for all but one prime that a non-trivial $p$-adic solution exists if $s>2k^2$. The missing primes are $p=2$ in the case $k=3 \cdot 2^\tau$ and $p$ if $k=p^{\tau} \left(p-1\right)$. Here, they gave the bounds $s\ge \frac{8}{3} k^2$ for $p=2$ and $k=3 \cdot 2^\tau$, $s \ge 8k^2$ for $p=2$ and $k=2^\tau$, and $s\ge4k^2$ for $p\ge 3$ and $k=p^\tau \left( p-1 \right)$. All in all, the bound $s\ge8k^2$ holds for all $p$ and all $k$.

There was some further progress for $p=2$ and $k=2^\tau$ for $\tau=1$, $\tau=2$ and $\tau\ge 16$. For $k=2$ the expected bound $s > 8$ follows from the general result by Dem'yanov~\cite{Demyanov} that for two quadratic forms $f_1, f_2$ in at least nine variables the equations $f_1=f_2=0$ have a non-trivial $p$-adic solution for all primes $p$. Poehler~\cite{Poehler} proved for $k=4$, that $49=3k^2+1$ variables suffice and Kr\"anzlein~\cite{Kraenzlein} showed for $k=2^\tau$ with $\tau \ge 16$ that the expected $2k^2+1$ variables are sufficient.

For $p \ge 3$ and $k=p^\tau\left(p-1\right)$ on the other hand, the bound was further sharpened by Godinho and de Souza Neto~\cite{GundNet,GodSouNet} who proved that
$
        s \ge 2 \frac{p}{p-1} k^2 -2k
$
suffices for $p \in \{3,5\}$ and if $\tau \ge \frac{p-1}{2}$ for $p \ge7$ as well. For $k=6=3\cdot 2$, the bound $s > 2k^2$ was reached by Godinho, Knapp and Rodrigues~\cite{k=6} while later Godinho and Ventura~\cite{p=3} showed that this bound suffices for $k=3^\tau \cdot 2$ with $\tau \ge 2$ as well. Therefore, all pairs of diagonal forms of equal degree $k$ in more than $2k^2$ variables have a non-trivial $3$-adic solution. The aim of this paper is to prove the following theorem, which shows that this statement does not only hold for $p=3$ but for all $p\ge 3$, by taking care of the degrees $k=p^{\tau} \left(p-1\right)$ for $p \ge 5$ and $\tau \ge 1$.
\begin{Theorem*}
        Let $p \ge 5$ be a prime, $\tau \ge1$ and $k=p^\tau\left(p-1\right)$. Then for $a_i, b_i \in \Z$ with $1 \le i\le s$, the equations 
        \begin{align}
        \label{Theoremeq}
            \sum_{i =1}^s a_i x_i^k= \sum_{i=1}^s b_i x_i^k = 0
        \end{align}
        have a non-trivial $p$-adic solution for all $s >2k^2$.
\end{Theorem*}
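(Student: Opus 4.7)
The plan is to work within the Davenport--Lewis normalization framework, refined in the spirit of the Br\"udern--Godinho approach. The first step is to $p$-normalize the pair $(f,g)$: applying the substitutions $x_i\mapsto p x_i$ together with invertible $\Q_p$-linear combinations of the two forms, one organises the coefficient pairs $(a_i,b_i)$ into ``levels'' indexed by the minimal $p$-adic valuation attainable in their $\Q_p$-orbit. A suitable version of the classical normalization lemma then yields a lower bound on the number $m_0$ of \emph{level-$0$} variables — those with coefficient pair primitive mod $p$ — which grows linearly in $s/k$ and, under $s>2k^2$, should be large enough to drive the subsequent combinatorial step.

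The key arithmetic observation is that $k=p^\tau(p-1)=\phi(p^{\tau+1})$, so by Euler's theorem $x^k\equiv 1\pmod{p^{\tau+1}}$ for every $x$ coprime to $p$, while $x^k\equiv 0\pmod{p^{\tau+1}}$ whenever $p\mid x$ (as $k\ge\tau+1$). Restricting to the level-$0$ variables and setting $x_i\equiv 1\pmod p$ for $i\in S$ and $x_i\equiv 0\pmod p$ otherwise, the system \eqref{Theoremeq} reduced modulo $p^{\tau+1}$ collapses to the purely additive subset-sum conditions
$$\sum_{i\in S}a_i\equiv 0\pmod{p^{\tau+1}},\qquad \sum_{i\in S}b_i\equiv 0\pmod{p^{\tau+1}}.$$
A $p$-adic lift of such a solution via Hensel's lemma requires a non-singularity condition, namely that there exist $i,j\in S$ with $a_i b_j-a_j b_i\not\equiv 0\pmod p$. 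The extra freedom in writing $x_i=1+p\,u_i$ with $u_i\in\Z/p^\tau$ supplies the additional degrees of freedom needed to refine the solution to the precision at which Hensel engages, taking account of the factor of $k$ in the Jacobian that contributes a $p$-valuation of $\tau$.

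The combinatorial kernel of the argument is then to produce, among the $m_0$ primitive level-$0$ pairs, a subset $S$ with both coordinate sums divisible by $p^{\tau+1}$ and containing at least two elements whose reductions mod $p$ are $\F_p$-linearly independent. I would classify the primitive pairs by their image in $\mathbb{P}^1(\F_p)$, obtaining at most $p+1$ classes, and split into regimes. In the ``spread'' regime, where at least two classes each contain substantially many pairs, standard zero-sum bounds of Davenport--Erd\H{o}s--Ginzburg--Ziv type for $\Z/p^{\tau+1}$ produce the required zero-sum after combining contributions from two distinct classes to guarantee non-singularity. In the ``concentrated'' regime, where nearly all primitive pairs lie on a single projective line, the transverse direction must be supplied by variables at deeper levels; these are brought in by an iterated contraction $x_i\mapsto p x_i$, at the cost of a more delicate staged Hensel lift.

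The main obstacle I anticipate is matching the combinatorial budget to the sharp threshold $s>2k^2$ in the concentrated regime, especially for small $\tau$, where deeper levels are scarce. This is also where the hypothesis $p\ge 5$ enters essentially: with $p+1\ge 6$ projective classes available, the pigeon-hole splitting that feeds the spread regime becomes viable, whereas $p=3$ was handled separately in~\cite{p=3}. I expect the proof to proceed by a careful case analysis on the distribution of primitive pairs among projective classes, combined with a precise level-by-level accounting of the variables available after normalization.
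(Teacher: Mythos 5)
Your outline reproduces the known reduction, but the substance of the theorem is missing. The framework you describe --- Davenport--Lewis $p$-normalisation, the observation that $k=\phi(p^{\tau+1})$ so that $x^k\equiv 1$ or $0 \bmod p^{\tau+1}$ and the congruences \eqref{Theoremeq} modulo $p^{\tau+1}$ become subset-sum conditions in $\left(\Z/p^{\tau+1}\Z\right)^2$, the Hensel-type lifting under a rank-$2$ condition, and the classification of level-$0$ coefficient pairs into $p+1$ projective classes --- is exactly the starting point of the paper (Sections 2--3, following Davenport--Lewis and Br\"udern--Godinho), and that machinery by itself is already known to give only bounds of the shape $s\ge 4k^2$ for these degrees. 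The entire new content lies in the part you defer: the ``combinatorial budget'' in the concentrated regime and for small $\tau$, which you explicitly flag as the anticipated obstacle, is not a technical residue but the proof itself. Concretely, even your ``spread'' regime is not closed by EGZ/Olson-type bounds alone: the relevant group is $\left(\Z/p^{\tau+1}\Z\right)^2$ with Davenport constant $2p^{\tau+1}-1$, while normalisation only guarantees $m_0>s/k>2k=2p^{\tau+1}-2p^{\tau}$ level-$0$ variables, which falls short; the paper can finish at level $0$ only when $q_0\ge 2p^{\tau+1}-1$ (Lemma \ref{q_0}), and otherwise must run a multi-level contraction argument in which variables are pushed up one level at a time while the non-singularity is tracked through ``primary'' and ``colourful'' variables rather than by a single Hensel step with the $x_i=1+pu_i$ freedom you invoke.

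What makes that multi-level argument work at the sharp threshold $s>2k^2$ --- and what your sketch gives no route to --- is a collection of devices absent from your proposal: a refinement of the colour classes to ``colour nuances'' (classifying level coefficient vectors modulo $p^2$, Lemmas \ref{FS3}, \ref{createavariableinm_1-I_0^1}, \ref{FS5}), which is needed to control the exact level and colour of contracted variables; the additional normalisation properties of Lemma \ref{properties} and the parameter $r$ measuring $q_0$; quantitative contraction lemmas such as \ref{FS6}, \ref{hinterlassen}, \ref{x-y-Lemmaneu} that convert counts of $E_\nu^l$ variables into variables at prescribed higher levels while leaving prescribed reserves; and, for $\tau=1$, a $p$-equivalence permutation trick (Lemma \ref{i,i+1}) that compensates for the shortage of intermediate levels --- precisely the case you identify as problematic. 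Your explanation of where $p\ge 5$ enters (having $p+1\ge 6$ projective classes) is also not the actual mechanism; the hypothesis is consumed in numerous numerical inequalities, with several lemmata even requiring separate treatment of $p=5$ versus $p\ge 7$. So the proposal is a correct description of the ambient strategy but leaves a genuine gap where the theorem's proof actually lives.
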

This completes the proof of Artin's conjecture for two diagonal forms of the same degree for all primes $p\neq 2$. For $p=2$ there are only the questions left whether there is a non-trivial $2$-adic solution for $k=3 \cdot 2^\tau$ for $\tau \ge 2$ and $k=2^\tau$ for $2 \le \tau \le 15$ provided that $s > 2k^2$. The argument by Kr\"anzlein~\cite{Kraenzlein} can be easily applied for the case $k=3\cdot 2^\tau$ as well if $\tau\ge 16$. Thus, only finitely many $k$ remain for which the bound $s > 2k^2$ is not reached.
\newline\\
The proof of the theorem follows a pattern by Davenport and Lewis~\cite{twoaddeq} while making use of some improvements by Br\"udern and Godinho~\cite{BundG2}. Section \ref{$p$-normalization} defines an equivalence relation on the set of all systems \eqref{Theoremeq}, introduced by Davenport and Lewis~\cite{twoaddeq}. This equivalence relation is defined in a way that solubility of \eqref{Theoremeq} in $\Q_p^s\backslash\{\boldsymbol{0}\}$ is preserved, which allows to pick representatives with useful properties from each class and prove the existence of a non-trivial $p$-adic solution only for them. Due to a version of Hensel's lemma, one can show that a system \eqref{Theoremeq} has a non-trivial $p$-adic solution by proving that the congruences
\begin{align}
\label{equiv}
    \sum_{i = 1}^s a_i x_i^k \equiv \sum_{i=1}^s b_i x_i^k \equiv 0 \mod p^{\tau+1}
\end{align}
have a solution for which the matrix
\begin{align}
\label{matrix}
		\begin{pmatrix}
		a_1 x_1 & \dots & a_s x_s\\
		b_1 x_1 & \dots & b_s x_s
		\end{pmatrix}
		\end{align}
has rank $2$ modulo $p$. Section \ref{Coloured variables and Contractions} recalls the notions of coloured variables, introduced by Br\"udern and Godinho~\cite{BundG2}, and contractions which were established by Davenport and Lewis~\cite{twoaddeq}. Together, they are the foundation of the proof. Coloured variables and a refinement of them provide a way to take care of the rank of the matrix \eqref{matrix}, while contractions are a means to solve the equations \eqref{equiv} recursively by lifting solutions modulo $p^l$ to solutions modulo $p^{l+1}$. Furthermore, this section continues the path laid down by Davenport and Lewis~\cite{twoaddeq} and Br\"udern and Godinho~\cite{BundG2}, which issues more restrictions on the pairs of equations one has to find a solution for. Section \ref{Combinatorial Results} is a collection of combinatorial results which are frequently used, directly and indirectly, in the remaining sections. A description on how the notion of coloured variables is used in combination with contractions to obtain a solution of \eqref{equiv} such that the matrix \eqref{matrix} has rank $2$ is contained in Section \ref{Strategy}, whereas Section \ref{Contraction Related Auxiliaries} consists of a collection of lemmata which describe situations in which one can lift some solutions modulo $p^l$ to solutions of a higher modulus. The remaining two sections contain the actual proof which is divided into Section \ref{Pairs of forms with tau=1} for the case $k=p\left(p-1\right)$ and Section \ref{Pairs of forms with tau ge 2}, where the remaining cases with $k= p^\tau \left(p-1\right)$ and $\tau \ge 2$ are handled. This division is due to the different modulus in \eqref{equiv}. For big $\tau$, one has more variables whose coefficients are not both congruent to $0$ modulo $p^{\tau+1}$, which is balanced in the case $\tau=1$ by a permutation argument.
\section{$p$-Normalisation}
	\label{$p$-normalization}
    This section will recall an equivalence relation on the set of systems \eqref{Theoremeq} which was introduced by Davenport and Lewis~\cite{twoaddeq} in order to choose representatives with specific characteristics.
    
    Define for any pair of additive forms
    \begin{align}
    \label{f,g}
        f= \sum_{i=1}^s a_i x_i^k,\qquad g= \sum_{i=1}^s b_i x_i^k
    \end{align}
    with rational coefficients $a_i$ and $b_i$ $\left( 1 \le i \le s \right)$ a rational number
    \begin{align*}  
        \vartheta \left( f, g \right) := \prod_{\substack{1 \le i, j \le s\\ i \neq j }} \left( a_i b_j - a_j b_i \right).
    \end{align*}
    For integers $\nu_i$ $\left( 1 \le i \le s \right)$ consider the pair
    \begin{align}
    \label{op1}
        f'= f\left(p^{\nu_1}x_1, \dots , p^{\nu_s}x_s\right), \qquad g' = g\left(p^{\nu_1}x_1, \dots , p^{\nu_s}x_s\right)
    \end{align}
    and for rational numbers $\lambda_1$, $\lambda_2$, $\mu_1$ and $\mu_2$ with $\lambda_1 \mu_2 - \lambda_2 \mu_1 \neq 0$ the pair
    \begin{align}
    \label{op2}
        f''=\lambda_1 f + \lambda_2 g, \qquad g''= \mu_1 f + \mu_2 g. 
    \end{align}
    If another pair $\tilde f, \tilde g$ with rational coefficients can be obtained by a finite succession of the operations $\eqref{op1}$ and $\eqref{op2}$ on the pair $f, g$, then they are called $p$-equivalent. If $\left( x_1', \dots, x_s'\right)$ is a non-trivial solution of $f'=g'=0$ then $\left(p^{\nu_1}x_1', \dots, p^{\nu_s}x_s'\right)$ is a non-trivial solution of $f=g=0$, whereas if $\left( x_1, \dots, x_s\right) $ is a non-trivial solution for $f=g=0$, then one has a non-trivial solution for $f'=g'=0$ as well, given via $\left( p^{-\nu_1} x_1, \dots, p^{-\nu_s}x_s\right)$. Therefore, solubility is preserved under the operation $\eqref{op1}$. The same holds for the operation $\eqref{op2}$. Here, one direction is obvious, and the other holds, because the transformation is invertible. Consequently, the existence of a non-trivial solution for $f=g=0$ in $\Q_p$ implies that there is one for all pairs $\tilde f, \tilde g$ which are $p$-equivalent to $f, g$. It can also be easily deduced from the definition of $\vartheta \left( f, g\right)$, that if $\vartheta \left(f, g\right)=0$, the same holds for $\vartheta \left(f',g'\right)$ and $\vartheta \left( f'', g''\right)$ and  therefore, for the whole $p$-equivalent class.
    \begin{Definition}
        A pair $f,g$ given by \eqref{f,g} with integers coefficients and $\vartheta \left( f, g \right) \neq 0 $ is called \textit{$p$-normalized}, if the power of $p$ dividing $\vartheta \left( f, g\right)$ is as small as possible amongst all pairs of forms \eqref{f,g} with integer coefficients in the same $p$-equivalent class.
    \end{Definition}
	As each $p$-equivalent class contains pairs, for which all coefficients $a_i, b_i$ are integers, it follows that the existence of a non-trivial solution for all $p$-normalised pairs induces a non-trivial solution for all pairs of forms with rational coefficients $a_j, b_j$ and $\vartheta \left( f, g \right)\neq 0$. Using a compactness argument, Davenport and Lewis~\cite{twoaddeq} showed that it induces the existence of a solution for all pairs of forms $f, g$ with $\vartheta\left( f, g \right)=0$ as well.
    \begin{Lemma}
    \label{pnormreicht}
        Suppose for an fixed $s$ that the equations $f=g=0$ have a non-trivial solution in $\Q_p$ for all $p$-normalised pairs $f,g$. Then, for any rational coefficients $a_j, b_j$, the equations \eqref{Theoremeq} have a non-trivial solution in $\Q_p$.
    \end{Lemma}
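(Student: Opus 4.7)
The plan proceeds in three stages. First, if $f, g$ have rational coefficients, let $N$ be a common denominator of all $a_i, b_i$. Then $(Nf, Ng)$ is obtained from $(f,g)$ by operation \eqref{op2} with $\lambda_1 = \mu_2 = N$ and $\lambda_2 = \mu_1 = 0$, so the two pairs are $p$-equivalent and share their $p$-adic solubility. Hence one may assume integer coefficients. If additionally $\vartheta(f,g) \neq 0$, then on the set of integer-coefficient representatives of the $p$-equivalence class of $(f,g)$ the valuation $v_p(\vartheta(\cdot,\cdot))$ takes non-negative integer values, so its minimum is attained at some pair $(f^*, g^*)$ which is by definition $p$-normalised. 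By hypothesis $(f^*, g^*)$ has a non-trivial $\Q_p$-solution, which transports back to $(f,g)$ via the $p$-equivalence.

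The substantial case is $\vartheta(f,g) = 0$, and here I would run a $p$-adic compactness argument. Since $\vartheta$ is a non-zero polynomial in the $2s$ variables $a_1, \dots, a_s, b_1, \dots, b_s$, the map $(u, v) \mapsto \vartheta(a + p^n u, b + p^n v)$ is $\vartheta$ precomposed with an invertible affine change of variables on $\Q^{2s}$, hence also non-zero, so it cannot vanish identically on $\Z^{2s}$. This produces integer-coefficient pairs $(f_n, g_n)$ with coefficients congruent to those of $(f,g)$ modulo $p^n$ and $\vartheta(f_n, g_n) \neq 0$. By the previous case each $(f_n, g_n)$ admits a non-trivial $\Q_p$-solution $\mathbf{x}^{(n)}$, and after rescaling by an appropriate power of $p$ one may assume $\mathbf{x}^{(n)} \in \Z_p^s$ with $\max_i |x_i^{(n)}|_p = 1$.

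Compactness of $\Z_p^s$ then furnishes a subsequence $\mathbf{x}^{(n_\ell)} \to \mathbf{x} \in \Z_p^s$. Since $|\cdot|_p$ is continuous, $\max_i |x_i|_p = 1$ and thus $\mathbf{x} \neq \mathbf{0}$; because the coefficients of $f_n, g_n$ converge $p$-adically to those of $f, g$, passing to the limit in $f_{n_\ell}(\mathbf{x}^{(n_\ell)}) = g_{n_\ell}(\mathbf{x}^{(n_\ell)}) = 0$ yields $f(\mathbf{x}) = g(\mathbf{x}) = 0$. The main subtlety I anticipate is verifying that these three ingredients -- the density of $\{\vartheta \neq 0\} \cap \Z^{2s}$ in the $p$-adic topology, the normalisation ensuring $\max_i |x_i^{(n)}|_p = 1$, and the joint $p$-adic continuity of polynomial evaluation in both coefficients and variables -- combine to deliver a genuinely non-trivial limiting solution; the choice of the normalisation is what prevents the limit from collapsing to zero, so it must be made before extracting the subsequence.
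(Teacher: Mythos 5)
Your argument is correct and is essentially the Davenport--Lewis argument that the paper cites for this lemma (reduction to integer coefficients, attaining the minimal power of $p$ dividing $\vartheta$ when $\vartheta\neq 0$, and a compactness/perturbation argument via the density of $\{\vartheta\neq 0\}$ and the normalisation $\max_i |x_i^{(n)}|_p=1$ when $\vartheta=0$). All three ingredients you flag do combine as you describe, so no gap remains.
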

    \begin{proof}
        See~\cite[Section 5]{twoaddeq}.
    \end{proof}
    Consequently, it suffices to focus on finding non-trivial $p$-adic solutions for $p$-normalised pairs $f, g$ in more than $2k^2$ variables. The following lemma gives information about the properties of them.
     \begin{Lemma}
    	\label{order}
    	A $p$-normalised pair of additive forms $f,g$ of degree $k$ in $s$ variables can be written as
    	\begin{align*}
    		f&=f_0 + p f_1 + \dots + p^{k-1} f_{k-1},\\
    		g&=g_0+ p g_1 + \dots + p^{k-1} g_{k-1},
    	\end{align*}
    	where $f_i, g_i$ are forms in $m_i$ variables, and these sets of variables are disjoint for $i = 0, 1, \dots, k-1$. Moreover, each of the $m_i$ variables occurs in at least one of $f_i, g_i$ with a coefficient not divisible by $p$. One has 
    	\begin{align}
    		m_0 + \dots + m_j \ge \frac{\left(j + 1\right) s}{k} \qquad \textup{ for } \qquad j=0, 1, \dots, k-1.
    	\end{align}
    	Moreover, if $q_i$ denotes the minimum number of variables appearing in any form $\lambda f_i + \mu g_i$ ($\lambda$ and $ \mu$ not both divisible by $p$) with coefficients not divisible by $p$, then 
    	\begin{align*}	
    			m_0 + \dots + m_{j-1} + q_j \ge \frac{\left( j + \frac 1 2 \right) s }{k } \qquad \textup{ for } \qquad j = 0, 1, \dots, k-1.
    	\end{align*}
    \end{Lemma}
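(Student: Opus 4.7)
The plan is to deduce everything from the minimality of $v_p\bigl(\vartheta(f,g)\bigr)$ that defines $p$-normalisation, by showing that any failure of the claims would produce a $p$-equivalent integer-coefficient pair with strictly smaller $v_p(\vartheta)$. First I would observe that every variable satisfies $t_i:=\min\bigl(v_p(a_i),v_p(b_i)\bigr)\le k-1$: otherwise operation \eqref{op1} with $\nu_i=-1$ and all other $\nu_j=0$ preserves integer coefficients and lowers $v_p(\vartheta)$ by $2k(s-1)$. Then one defines $M_j=\{i:t_i=j\}$, sets $m_j=|M_j|$, and extracts $f_j=p^{-j}\sum_{i\in M_j}a_i x_i^k$, $g_j=p^{-j}\sum_{i\in M_j}b_i x_i^k$; the sets $M_j$ are disjoint and by construction each variable in $f_j,g_j$ carries at least one coefficient coprime to $p$.

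For the first inequality I would argue by contradiction: suppose $m:=m_0+\dots+m_j<(j+1)s/k$ for some $j$. Apply \eqref{op1} with $\nu_i=1$ for $i\in M_0\cup\dots\cup M_j$ and $\nu_i=0$ otherwise; this multiplies the ``low'' coefficients by $p^k$ while leaving the ``high'' ones untouched. Because every coefficient of the resulting pair is divisible by $p^{j+1}$ (low blocks have acquired $p^k$, high blocks already had $p^{j+1}$ by definition of $M_r$ with $r\ge j+1$), I can apply \eqref{op2} with $\lambda_1=\mu_2=p^{-(j+1)}$, $\lambda_2=\mu_1=0$ to divide both forms by $p^{j+1}$ and remain with integer coefficients. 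Using that $\vartheta$ transforms as $\vartheta(f',g')=p^{2k(s-1)\sum\nu_i}\vartheta(f,g)$ under \eqref{op1} and picks up a factor $p^{-2(j+1)s(s-1)}$ under this \eqref{op2}, the net change in $v_p(\vartheta)$ equals $2(s-1)\bigl(km-(j+1)s\bigr)$, which is negative under the hypothesis and contradicts $p$-normalisation.

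For the second inequality the strategy is parallel but one starts by mixing the forms. Assume $m_0+\dots+m_{j-1}+q_j<(j+\tfrac12)s/k$, and let $\lambda,\mu$, not both divisible by $p$, realise the minimum $q_j$; by symmetry suppose $p\nmid\mu$. Apply \eqref{op2} with $\lambda_1=\lambda$, $\lambda_2=\mu$, $\mu_1=1$, $\mu_2=0$ (determinant $-\mu\neq0$), replacing $f,g$ by $\lambda f+\mu g,\,f$. In the new $j$-th block only $q_j$ variables contribute a coefficient coprime to $p$ to the new $f_j$, while the remaining $m_j-q_j$ variables of $M_j$ have their new ``$a$-coefficient'' divisible by $p^{j+1}$. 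I then repeat the scaling argument of the previous paragraph, now with $\nu_i=1$ on $M_0\cup\dots\cup M_{j-1}$ together with the $q_j$ still-useful variables in block $j$, and $\nu_i=0$ elsewhere, followed by division of the resulting \emph{single} form by $p^{j+1}$ via a carefully chosen \eqref{op2}. The change in $v_p(\vartheta)$ turns out to be proportional to $k\bigl(m_0+\dots+m_{j-1}+q_j\bigr)-(j+\tfrac12)s$, the factor $\tfrac12$ reflecting that only one of the two forms is being rescaled by $p^{-(j+1)}$ instead of both.

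The main obstacle is the bookkeeping of $v_p(\vartheta)$ in this last step, since the initial \eqref{op2} does not simply multiply $\vartheta$ by a rational constant independent of $p$ and must be combined with the partial rescaling of only one form: the quantitative match with the exponent $j+\tfrac12$ is delicate and forces the precise choice of $\nu_i$ above. Once this computation is carried out, both inequalities follow from the definition of $p$-normalisation and one obtains the lemma as stated.
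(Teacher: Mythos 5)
Your overall strategy is the right one (it is exactly the Davenport--Lewis argument that the paper cites for this lemma): express everything through the effect of \eqref{op1} and \eqref{op2} on $v_p(\vartheta)$ and invoke minimality. Your treatment of the level decomposition and of the first inequality is correct and complete: scaling the $m=m_0+\dots+m_j$ low variables by $p$ and dividing both forms by $p^{j+1}$ changes $v_p(\vartheta)$ by $2(s-1)\left(km-(j+1)s\right)$, and minimality forces $km\ge (j+1)s$.

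The gap is in the second inequality, and it is precisely the step you leave as ``delicate'' and do not carry out. First, your stated worry is misplaced: the initial \eqref{op2} sending $(f,g)$ to $(\lambda f+\mu g,\,f)$ has determinant $-\mu$, a $p$-adic unit, and \emph{any} \eqref{op2} multiplies $\vartheta$ by $(\lambda_1\mu_2-\lambda_2\mu_1)^{s(s-1)}$, so this step changes $v_p(\vartheta)$ by $0$; there is nothing subtle there. The real problem is your final division: if, after putting $\nu_i=1$ on $M_0\cup\dots\cup M_{j-1}$ and the $q_j$ good variables of block $j$, you divide only the single form $\lambda f+\mu g$ by $p^{j+1}$ and leave the other form untouched, the net change in $v_p(\vartheta)$ is $2k(s-1)(m_0+\dots+m_{j-1}+q_j)-(j+1)s(s-1)$, and minimality only yields $m_0+\dots+m_{j-1}+q_j\ge \frac{(j+1)s}{2k}$, which is strictly weaker than the claimed $\left(j+\tfrac12\right)\frac{s}{k}$ for every $j\ge 1$; so the argument as described does not prove the lemma. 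The missing observation is that after the same scaling the \emph{second} form (your $f$) has all coefficients divisible by $p^{j}$: low-level and good level-$j$ variables acquired a factor $p^k\ge p^{j+1}$, and all variables at level $\ge j$ already carry $p^{j}$. Hence one may apply \eqref{op2} with diagonal entries $p^{-(j+1)}$ and $p^{-j}$, of determinant $p^{-(2j+1)}$, keeping integer coefficients; the net change then becomes $(s-1)\left[2k\left(m_0+\dots+m_{j-1}+q_j\right)-(2j+1)s\right]$, and minimality gives exactly $m_0+\dots+m_{j-1}+q_j\ge\frac{(2j+1)s}{2k}=\left(j+\tfrac12\right)\frac{s}{k}$. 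With this correction your proof coincides with the cited one; without it, the factor $j+\tfrac12$ is not obtained.
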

	\begin{proof}
		See~\cite[Lemma 9]{twoaddeq}.
	\end{proof}
	At least one integer coefficient $a_i$ or $b_i$ of a variable $x_i$ of a $p$-normalised pair $f, g$ is non-zero, because else one would have $\vartheta\left( f, g\right)=0$. Consequently, there is a maximal power $l$ of $p$, which divides both $a_i$ and $b_i$. Due to the previous lemma, one can deduce, that $0 \le l\le k-1$ for all variables $x_i$ of a $p$-normalised pair.
	\begin{Definition}
        A variable $x_i$ of a pair $f, g$ with integer coefficients is said to be \textit{at level} $l$ if its coefficients $a_i$ and $b_i$ are both divisible by $p^l$ but not both divisible by $p^{l+1}$.
	\end{Definition}
	By Lemma \ref{order}, a $p$-normalised pair has exactly $m_l$ variables at level $l$ for $0 \le l \le k-1$. The integers $\tilde a_i, \tilde b_i$ are defined for a variable $x_i$ at level $l$ with integer coefficients $a_i, b_i$ via $\tilde a_i= p^{-l}a_i$ and $\tilde b_i= p^{-l} b_i$. These integers $\tilde a_i, \tilde b_i$ are the coefficients of the forms $f_l, g_l$ as defined in Lemma~\ref{order} and the vector $\binom{\tilde a_i}{\tilde b_i}$ is called the \textit{level coefficient vector} of a variable $x_i$.
    
    One can restrict the question of the existence of a non-trivial $p$-adic solution to one of congruences. To this end, it is useful to adopt the notation $k=p^\tau \delta k_0$ with $\delta = \textup{gcd}\left(k, p-1 \right) $, $\textup{gcd}\left(p,k_0 \right) =1$ and
	\begin{align}
	\label{gamma}
	\gamma := \begin{cases}
	1, & \textup{ if } \tau = 0\\
	\tau +1, & \textup{ if } \tau > 0  \textup{ and } p >2\\
	\tau +2, & \textup{ if } \tau > 0 \textup{ and } p=2,
	\end{cases}
	\end{align}
	by Davenport and Lewis~\cite{twoaddeq} which is used in the following lemma.
	\begin{Lemma}
		\label{Hensel}
		If the congruences
		\begin{align}
			\label{modeq}
		\sum_{i = 1}^s a_i x_i^k \equiv 0 \mod p^\gamma, \qquad \sum_{i=1}^s b_i x_i^k \equiv 0 \mod p^\gamma
		\end{align}
		have a solution in the integers for which the matrix
		\begin{align*}
		\begin{pmatrix}
		a_1 x_1 & \dots & a_s x_s\\
		b_1 x_1 & \dots & b_s x_s
		\end{pmatrix}
		\end{align*}
		has rank $2$ modulo $p$, then the equations \eqref{Theoremeq} have a non-trivial $p$-adic solution.
	\end{Lemma}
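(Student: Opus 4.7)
The plan is to reduce to a two-variable Hensel-type lifting by exploiting the structure of $k$-th powers in $1 + p\Z_p$. Let $\left(x_1, \dots, x_s\right)$ be the integer solution of \eqref{modeq} whose existence is assumed, and pick indices $i \neq j$ for which the $2 \times 2$ minor $x_i x_j\left(a_i b_j - a_j b_i\right)$ is a unit modulo $p$; such a pair exists by the rank hypothesis, and in particular each of $x_i$, $x_j$ and $a_i b_j - a_j b_i$ is a unit in $\Z_p$.

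I then search for a genuine $p$-adic solution obtained from the given one by replacing $x_i$ with $x_i u_i$ and $x_j$ with $x_j u_j$, where $u_i, u_j \in 1 + p \Z_p$ remain to be determined. Setting $v_i = u_i^k - 1$ and $v_j = u_j^k - 1$, the vanishing of $f$ and $g$ becomes the linear system
\begin{align*}
    a_i x_i^k v_i + a_j x_j^k v_j &= -f\left(x_1, \dots, x_s\right),\\
    b_i x_i^k v_i + b_j x_j^k v_j &= -g\left(x_1, \dots, x_s\right).
\end{align*}
By \eqref{modeq} the right-hand sides lie in $p^\gamma \Z_p$, while the determinant $\left(a_i b_j - a_j b_i\right)\left(x_i x_j\right)^k$ of the coefficient matrix is a $p$-adic unit by the choice of $i, j$. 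Cramer's rule therefore yields unique $v_i, v_j \in p^\gamma \Z_p$ solving the system.

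It remains to exhibit $u_i, u_j \in 1 + p\Z_p$ with $u_i^k = 1 + v_i$ and $u_j^k = 1 + v_j$. For odd $p$ the $p$-adic logarithm gives a topological group isomorphism $1 + p\Z_p \cong p\Z_p$ under which $u \mapsto u^k$ corresponds to multiplication by $k$; since $k = p^\tau \delta k_0$ with $\textup{gcd}\left(\delta k_0, p\right) = 1$, the image of this map is $kp \Z_p = p^{\tau+1}\Z_p$, so the map $u \mapsto u^k$ bijects $1 + p\Z_p$ onto $1 + p^{\tau+1}\Z_p$, which is exactly the condition $\gamma = \tau + 1$ in \eqref{gamma}. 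For $p = 2$ the same argument applies after replacing $1 + p\Z_p$ by $1 + 4\Z_2$, which accounts for the shift $\gamma = \tau + 2$. The resulting point is an exact $p$-adic zero of both $f$ and $g$; it is nontrivial since $x_i u_i \equiv x_i \not\equiv 0 \m p$.

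The only technical step is matching the exponent $\gamma$ with the image of $u \mapsto u^k$ on the principal units, and this is precisely what the definition of $\gamma$ in \eqref{gamma} is designed to ensure. Once this is in hand, the proof is a direct linear-algebra calculation over $\Z_p$.
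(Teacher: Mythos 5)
Your proof is correct and follows essentially the same route as the argument the paper cites (Davenport and Lewis, Lemma 7): one perturbs the two variables supplying a unit $2\times 2$ minor and uses that a $p$-adic unit congruent to $1$ modulo $p^\gamma$ is a $k$-th power in $\Q_p$, which you re-derive via the $p$-adic logarithm instead of quoting it. The only point to tidy is the case $p=2$, $\tau=0$ (so $\gamma=1$), where replacing $1+p\Z_p$ by $1+4\Z_2$ does not literally cover the claim; there one notes instead that the $k$-th power map is bijective on $\Z_2^\times$ for odd $k$ --- a remark that is anyway immaterial for this paper, which only needs $p\ge 5$ and $\tau\ge 1$.
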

	\begin{proof}
		See~\cite[Lemma 7]{twoaddeq}.
	\end{proof}
	Such a solution is called a \textit{non-singular solution}. The remainder of the proof of the theorem will focus on finding non-singular solutions for $p$-normalised pairs $f, g$.
	
	The next section will introduce the methods used to find non-singular solutions.
	\section{Coloured Variables and Contractions}
	\label{Coloured variables and Contractions}
	This section will recall the concept of coloured variables, first used by Br\"udern and Godinho~\cite{BundG2}, and refine it in a way such that it meets the requirements of the special case $k=p^{\tau} \left(p-1\right)$. It will also describe the method of contractions which was introduced by Davenport and Lewis~\cite{twoaddeq}. Together, both concepts form the foundation of this proof.
	
    To have more control over the non-singularity of a solution of \eqref{modeq}, Br\"udern and Godinho~\cite{BundG2} divided the set of variables at level $l$ into $p+1$ sets, depending on their level coefficient vector.
	For that, they defined the vectors $\boldsymbol{e}_0 = \binom1 0$ and $\boldsymbol{e}_\nu = \binom \nu 1 $ for $\nu \in \left \{1, \dots, p \right \}$. Viewed as vectors in $\left(\Z/p\Z\right)^2$ the vectors define the sets
	\begin{align*}                                                                                                                                                                                     
            \mathscr{L}_\nu :=\left \{ c \boldsymbol{e}_\nu \mid c \in \left(\Z/p\Z\right)^\ast  \right \}
    \end{align*}
        for $0 \le \nu \le p$. Modulo $p$, each level coefficient vector $\left( \tilde{a}_i, \tilde{b}_i \right)$ lies in exactly one of the disjoint sets $\mathscr{L}_\nu$.
    \begin{Definition}
        A variable $x_i$ at level $l$ is said to be of \textit{colour} $\nu$, if the level coefficient vector $\left(\tilde a_i, \tilde b_i\right)$ interpreted as a vector in $\F_p^2$ lies in $\mathscr{L}_\nu$. The parameter $I_\nu^l$ of a pair $f, g$ is the number of variables $x_i$ at level $l$ of colour $\nu$.
    \end{Definition}
	The parameter $q_l$ introduced in Lemma \ref{order} denotes the minimum number of variables appearing with a coefficient not divisible by $p$ in any form $\lambda f_l + \mu g_l$ with $\left( \lambda, \mu \right) \nequiv \left(0,0\right)$ modulo $p$. This is closely related to the concept of coloured variables. By setting $\lambda \equiv 0$ modulo $p$ for $\nu=0$ or $\mu\equiv -\lambda \nu$ for $\nu \in \{1, \dots, p\}$ the variables which appear in $\lambda f_l + \mu g_l $ with a coefficient divisible by $p$ are exactly those of colour $\nu$. Consequently, if $I_\nu^l\ge I_\mu^l$ for all $0 \le \mu \le p$ it follows that $I_\nu^l= m_l-q_l$. Define  $I_\mx^l=m_l-q_l$. This notation can be generalised as follows.
	\begin{Definition}
		For a set $\mathscr{K}$ of indices $i$ of variables $x_i$ at level $l$ define $I_\nu\left(\mathscr{K}\right)$ as the number of $i \in \mathscr{K}$ with $x_i$ of colour $\nu$, $I_\mx\left(\mathscr{K}\right)= \max_{0 \le \nu \le p} I_\nu\left(\mathscr{K}\right)$ and $q\left( \mathscr{K} \right)= |\mathscr{K} |- I_\mx\left( \mathscr{K}\right)$.
	\end{Definition}
	Note, that if $\mathscr{K}$ is the set of all indices of variables at level $l$, then $|\mathscr{K}|=m_l$, $I_\nu\left(\mathscr{K}\right)=I_\nu^l$, $I_\mx\left(\mathscr{K}\right)=I_\mx^l$ and $q\left( \mathscr{K} \right)=q_l$.
	
	From the definition of a non-singular solution it follows, that whether a solution of $\eqref{modeq}$ is non-singular depends exclusively on the variables at level $0$. If a solution of \eqref{modeq} has variables at level $0$ of at least two different colours set to a value which is not congruent to $0$ modulo $p$, the corresponding matrix has rank $2$ modulo $p$ making it a non-singular solution.
	To use variables at different levels one can take sets of variables at one level and combine them in a way that they can be seen as a variable of a higher level. This method was introduced by Davenport and Lewis~\cite{twoaddeq} and applied in combination with the notion of coloured variables by Br\"udern and Godinho~\cite{BundG2}.
	\begin{Definition}
		Let $\mathscr{K}$ be a set of indices $j$ with $x_j$ at level $l$. Let $h \in \N$ with $h>l$ and suppose that there are integers $y_j$ with $p \nmid y_j$ such that
		\begin{align}
		\label{contraction}
		\sum_{j \in \mathscr{K}} a_j y_j^k \equiv \sum_{j \in \mathscr{K}} b_j y_j^k \equiv 0 \mod p^{h}.
		\end{align}
		Then $\mathscr{K}$ is called a \textit{contraction from level $l$ to level at least $h$}. If either $\sum_{j \in \mathscr{K}} a_j y_j^k$ or $\sum_{j \in \mathscr{K}} b_j y_j^k$ is not congruent to $0$ modulo $p^{h+1}$, then $\mathscr{K}$ is called a \textit{contraction from level $l$ to level $h$}.
	\end{Definition}
	Recall for variables at level $l$ that $\tilde a_j = p^{-l} a_j$ and $\tilde b_j = p^{-l} b_j$. Hence, a set $\mathscr{K}$ of variables at level $l$ is a contraction to a variable at level at least $l+n$ if there are $y_j$ not divisible by $p$ such that
	\begin{align*}
        \sum_{j \in \mathscr{K}} \tilde a_j y_j^k \equiv \sum_{j \in \mathscr{K}} \tilde b_j y_j^k \equiv 0 \mod p^n.
	\end{align*}

	If $\mathscr{K}$ is a contraction from level $l$ to some level $h$, one can set $x_j=y_jX_0$ for all $j$ in the contraction $\mathscr{K}$. Through this, one obtains a variable $X_0$ at level $h$. One says that the variable $X_0$ can be \textit{traced back} to the variables $x_j$ with $j \in \mathscr{K}$. Assume that there are other variables $X_i$ at level $h$ with $i \in \{1, \dots, n\}$, where each of the variables $X_i$ is a variable at level $h$ which either occurred in the pair $f, g$ or is the result of a contraction. If the set of indices $\{0, 1, \dots, n\}$ of the variables $X_0, X_1, \dots, X_n$ is a contraction to a variable $Y$ at a level at least $h+1$, then one says that the variable $Y$ can be traced back not only to the variables $X_i$ for $i \in \{ 0, 1, \dots, n\}$ but also to all the variables that those variables can be traced back to. For example, $Y$ can be traced back to all $x_j$ with $j \in \mathscr{K}$.
	\begin{Definition}
	A variable is called a \textit{primary variable} if it can be traced back to two variables at level $0$ of different colours.
	\end{Definition}
	If one can contract a primary variable at level at least $\gamma$, then by setting this contracted variable $1$ and everything else zero, one obtains a non-singular solution of \eqref{modeq} and therefore a non-trivial $p$-adic solution.
	
	In some cases the knowledge of the exact level and colour of a variable that was contracted will give quite an advantage. To gain control about this, the concept of coloured variables is not strong enough because it can only give the information whether a certain set of variables at level $l$ is a contraction to a variable at level at least $l+1$, but one does not know the behaviour of the variables modulo $p^{l+2}$. Therefore, one cannot use it to extract information about the exact level and colour of the contracted variable. To gain this information, one can divide the set of variables of one colour into smaller sets, which consider the level coefficient vectors $\binom{\tilde{a}_i}{\tilde{b}_i}$ not only modulo $p$ but modulo $p^2$.
	
	For that, view the vectors $\boldsymbol{e}_0 = \binom1 0$ and $\boldsymbol{e}_\nu = \binom \nu 1$ as vectors in $\left(\Z/p^2\Z\right)^2$ and define the vectors $\boldsymbol e^0= \binom 0 p $ and $\boldsymbol e^\nu = \binom p 0 $ for $\nu \in \{1, \dots, p-1\}$. This enables one to define sets similar to the sets $\mathscr{L}_\nu$ via
        \begin{align*}
            \mathscr{L}_{\nu \mu} &:= \left \{ c\left(\boldsymbol e_\nu +\mu \boldsymbol e^\nu \right) \mid c \in \left(\Z/p^2\Z\right)^\ast \right\}
        \end{align*}
        for $0 \le \nu \le p$ and $0 \le \mu \le p-1$.
	Here again, a level coefficient vector $\binom{\tilde{a}_i}{\tilde{b}_i}$ lies modulo $p^2$ in exactly one of the disjoint sets $\mathscr{L}_{\nu \mu}$.
	\begin{Definition}
		A variable $x_i$ is said to be of \textit{colour nuance} $\left(\nu,\mu\right)$ if the level coefficient vector $\left(\tilde a_i, \tilde b_i\right)$ interpreted as a vector in $\left(\Z/p^2 \Z\right)^2$  lies in $\mathscr{L}_{\nu \mu}$. The parameter $I_{\nu \mu}^l$ of a pair $f, g$ is the number of variables $x_i$ at level $l$ of colour nuance  $\left(\nu,\mu\right)$.
	\end{Definition}
	For all variables $x_i$ of colour nuance $\left(\nu, \mu\right)$ there is a unique integer $c_i \in \{1, 2, \dots, p^2\}\backslash p\Z$ for which $\binom{\tilde{a}_i}{\tilde{b}_i} \equiv c_i \left( \boldsymbol{e}_\nu + \mu \boldsymbol{e}^\nu\right) \bmod p^2$. The integer $c_i$ is said to be the \textit{corresponding integer} to $x_i$.
	
	Lemmata \ref{pnormreicht} and \ref{Hensel} show that it suffices to find a non-singular solution for all $p$-normalised pairs in order to prove that for any rational coefficients $a_j, b_j$ the equations \eqref{Theoremeq} have a non-trivial solution in $\Q_p$. Due to Lemma \ref{order} one already has some information about the number of variables at certain levels and the distribution of these variables in the different colours of $p$-normalised forms $f, g$. One can further exploit that every $p$-equivalence class contains more than just one $p$-normalised pair. The next lemma shows further properties that are fulfilled by at least one $p$-normalised pair in each $p$-equivalence class for which $\vartheta\left(f, g\right) \neq 0$ holds.
	
	\begin{Lemma}
		\label{properties}
		Each pair of additive forms \eqref{f,g}, with rational coefficients and $\vartheta \neq 0$, is $p$-equivalent to a $p$-normalised pair $f, g$ possessing the following properties:
		\begin{enumerate}
			\item $g_0$ contains exactly $q_0$ variables with coefficients not divisible by $p$.
			\item One of $f_1, g_1$ contains exactly $q_1$ variables with coefficients not divisible by $p$.
			\item $g_0$ has the form 
				\begin{align*}	
					g_0 = p^2 \sum_{i=1}^{I_{00}^0} \alpha_i x_i^k + p \sum_{i=I_{00}^0+1}^{I_0^0} \beta_i x_i^k + \sum_{I_0^0 +1}^{m_0} \gamma_i x_i^k,
				\end{align*}
				where $\beta_{I_{00}^0+1}, \dots, \beta_{I_0^0}, \gamma_{I_0^0+1}, \dots, \gamma_{m_0}$ are not divisible by $p$, and 
			\begin{align*}
			m_0 + m_1-I_0^1 - \frac{s}{k} \ge I_{00}^0 \ge \frac{m_0-q_0}{p}.
			\end{align*}
		Furthermore, $I_{00}^{0} \ge I_{0 \mu}^0$ for all $0 \le \mu \le p-1$.
		\end{enumerate}
	\end{Lemma}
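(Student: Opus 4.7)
The plan is to start from any $p$-normalised representative of the given equivalence class and then refine it by applying further operations within the class that preserve $v_p(\vartheta)$: operations of type \eqref{op2} whose coefficient matrix has $p$-adic unit determinant, and operations of type \eqref{op1} satisfying $\sum_i \nu_i = 0$. Properties (i) and (ii) will be secured simultaneously by a single $GL_2(\F_p)$-transformation, the structural form of $g_0$ in (iii) follows immediately from (i), the lower bound on $I_{00}^0$ comes from pigeonhole together with a finer relabelling, and the upper bound on $I_{00}^0$ is forced by $p$-normalisation itself.

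For (i) and (ii), I observe that modulo $p$ the level coefficient vectors lie on the $p+1$ lines $\mathscr{L}_0, \ldots, \mathscr{L}_p \subset \F_p^2$. First I choose an operation of type \eqref{op2} whose reduction modulo $p$ sends the most populated line at level $0$ onto $\mathscr{L}_0$; then $I_0^0 = I_\mx^0$, so $g_0$ contains exactly $m_0 - I_\mx^0 = q_0$ coefficients coprime to $p$, proving (i). The stabiliser of $\mathscr{L}_0$ in $GL_2(\F_p)$ is the Borel subgroup, which acts transitively on the remaining $p$ lines $\mathscr{L}_1, \ldots, \mathscr{L}_p$. Within this stabiliser I select an element that sends the most populated line at level $1$ to either $\mathscr{L}_0$ (in which case $g_1$ has exactly $q_1$ variables with coefficients coprime to $p$) or to $\mathscr{L}_p$ (in which case $f_1$ does), yielding (ii) without disturbing (i).

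Once (i) holds, the variables in $g_0$ split according to $v_p(b_i)$: the $m_0 - I_0^0$ variables of colour $\nu \neq 0$ contribute units $\gamma_i$; the variables of colour $0$ and colour nuance $(0, \mu)$ with $\mu \neq 0$ contribute coefficients with $v_p(b_i) = 1$; and those of colour nuance $(0, 0)$ contribute $v_p(b_i) \geq 2$. This is precisely the displayed decomposition of $g_0$. A further \eqref{op2} of the form $g \mapsto g + \lambda p f$ is congruent to the identity modulo $p$, so it preserves (i) and (ii) but shifts the subcolour label $\mu \mapsto \mu + \lambda$ inside colour $0$; relabelling in this way secures $I_{00}^0 \geq I_{0\mu}^0$ for every $\mu$. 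Since the $I_0^0 = m_0 - q_0$ variables of colour $0$ are distributed among $p$ subcolours, pigeonhole then yields $I_{00}^0 \geq (m_0 - q_0)/p$.

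For the upper bound, let $T$ be the set of variables at levels $0$ or $1$ with $v_p(b_i) \leq 1$; one checks $|T| = m_0 + m_1 - I_{00}^0 - I_0^1$. Suppose for contradiction that $|T| < s/k$ and apply the composite transformation consisting of the \eqref{op1} rescaling $x_i \mapsto p\, x_i$ for $i \in T$ (with $\nu_i = 0$ otherwise) together with the \eqref{op2} given by $(f, g) \mapsto (f, p^{-2} g)$. For $i \in T$ the new coefficients $p^k a_i$ and $p^{k-2} b_i$ are integral because $k \geq 2$; for $i \notin T$ we have $v_p(b_i) \geq 2$ so $b_i / p^2$ is integral. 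From the identity $a_i' b_j' - a_j' b_i' = p^{k(\nu_i + \nu_j) - 2} (a_i b_j - a_j b_i)$ one obtains
\begin{align*}
v_p(\vartheta') - v_p(\vartheta) = 2(s-1)\bigl(k|T| - s\bigr) < 0,
\end{align*}
contradicting $p$-normalisation; hence $|T| \geq s/k$, which rearranges to the claim. The main subtlety is the construction of this last composite operation and the verification that the resulting coefficients are integral; everything else is routine once the action of the two types of operations on $\vartheta$ has been recorded.
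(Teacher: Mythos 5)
The paper does not prove this lemma at all: it simply cites Lemma~10 of Davenport and Lewis~\cite{twoaddeq}, so there is no in-text argument to compare with. Your reconstruction is correct and is essentially the classical normalisation argument from that source. The individual steps all check out: an \eqref{op2} move with integer entries and determinant coprime to $p$ leaves $v_p\left(\vartheta\right)$, the levels and the parameters $m_l, q_l$ unchanged, so placing the largest level-$0$ colour class on $\mathscr{L}_0$ gives (i); the stabiliser of $\mathscr{L}_0$ acts on the remaining lines $\binom{\nu}{1}$ by $\nu \mapsto \left(a\nu+b\right)d^{-1}$, hence transitively, so (ii) can be arranged without disturbing (i); the substitution $g \mapsto g + \lambda p f$ is the identity modulo $p$, fixes all colours and levels, and permutes the nuances $\left(0,\mu\right)$ by $\mu \mapsto \mu + \lambda$, which yields $I_{00}^0 \ge I_{0\mu}^0$ and then the pigeonhole bound $I_{00}^0 \ge I_0^0/p = \left(m_0-q_0\right)/p$; and the identification of $v_p\left(b_i\right)$ with the nuance data gives both the displayed shape of $g_0$ and $|T| = m_0 - I_{00}^0 + m_1 - I_0^1$. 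The minimality computation is also right: with $\nu_i = 1$ on $T$ and the scaling $g \mapsto p^{-2}g$ the new coefficients are integral (here $k \ge 2$, and variables outside $T$ have $v_p\left(b_i\right)\ge 2$, including all levels $\ge 2$), each factor of $\vartheta$ picks up $p^{k\left(\nu_i+\nu_j\right)-2}$, and summing over ordered pairs gives $2\left(s-1\right)\left(k|T|-s\right)$, which is negative precisely when $|T| < s/k$, contradicting $p$-normalisation. The only cosmetic slip is in your preamble: the final composite operation does not preserve $v_p\left(\vartheta\right)$ and has $\sum_i \nu_i = |T| \neq 0$; it is of course not meant to, since it is the operation producing the contradiction, and this does not affect the argument.
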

	\begin{proof}
		See~\cite[Lemma 10]{twoaddeq}.
	\end{proof}
	It follows from the first property, that $I_\mx^0=I_0^0=m_0-q_0$. The second property shows, that either $I_0^1 = m_1-q_1$ or $I_p^1=m_1-q_1$ and therefore, either the colour $0$ or the colour $p$ has the most variables at level $1$. Note, that it follows from the third property, that 
	\begin{align*}
		 I_0^0 + q_0 + m_1- I_0^1 -\frac{s}{k} \ge I_{00}^0 \ge \frac{I_0^0}{p}
	\end{align*}
	and thus, that
	\begin{align}
	\label{I_0^0ohneI_{00}^0}
		I_0^0 - I_{00}^0 \ge \frac{s}{k} -q_0 - \left( m_1-I_0^1\right).
	\end{align}	
	As every $p$-normalised pair is $p$-equivalent to a $p$-normalised pair possessing the properties of the previous lemma, it suffices to prove the existence of a non-singular solutions for $p$-normalised pairs with these properties.
	
	By using only the variables at level $0$ it was proved by Br\"udern and Godinho~\cite[Section 4]{BundG2} that a pair $f, g$ for which $q_0$ is large has a non-singular solution as displayed in the following.
	
	They said that a colour $\nu$ is \textit{zero-representing} if there is a subset $\mathscr{K}$ of variables at level $0$ of colour $\nu$ for some $0\le \nu\le p$, which is a contraction to a variable at level at least $\gamma$.
    The following Lemma is an immediate result from this definition.
	\begin{Lemma}
		\label{twocolourszerorepresenting}
		If a pair $f, g$ has two colours that are zero-representing, then there exists a non-singular solution of \eqref{modeq}.
	\end{Lemma}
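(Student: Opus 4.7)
The proof is essentially immediate once one unpacks the definitions. Let $\nu_1 \neq \nu_2$ be two zero-representing colours, so that there exist disjoint subsets $\mathscr{K}_1, \mathscr{K}_2$ of indices of variables at level $0$ of colours $\nu_1$ and $\nu_2$ respectively, together with integers $y_j$ ($j \in \mathscr{K}_1 \cup \mathscr{K}_2$) with $p \nmid y_j$, such that
\begin{align*}
\sum_{j \in \mathscr{K}_1} a_j y_j^k \equiv \sum_{j \in \mathscr{K}_1} b_j y_j^k \equiv 0 \m p^\gamma,\qquad
\sum_{j \in \mathscr{K}_2} a_j y_j^k \equiv \sum_{j \in \mathscr{K}_2} b_j y_j^k \equiv 0 \m p^\gamma.
\end{align*}
The plan is simply to combine these two contractions into a single candidate solution of \eqref{modeq} and verify non-singularity.

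I would define $x_j = y_j$ for $j \in \mathscr{K}_1 \cup \mathscr{K}_2$ and $x_i = 0$ for all other indices $i$. Adding the two pairs of congruences displayed above shows that the congruences \eqref{modeq} are satisfied modulo $p^\gamma$. It remains to check that the matrix \eqref{matrix} has rank $2$ modulo $p$. The non-zero columns of this matrix are indexed by $\mathscr{K}_1 \cup \mathscr{K}_2$; for $j \in \mathscr{K}_1$, the column $(a_j x_j, b_j x_j)^{\top} = y_j (a_j, b_j)^{\top}$ is a non-zero scalar multiple of $\boldsymbol{e}_{\nu_1}$ modulo $p$, because $p \nmid y_j$ and the level coefficient vector of $x_j$ lies in $\mathscr{L}_{\nu_1}$. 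Similarly every column indexed by $\mathscr{K}_2$ is a non-zero multiple of $\boldsymbol{e}_{\nu_2}$ modulo $p$. Since $\boldsymbol{e}_{\nu_1}$ and $\boldsymbol{e}_{\nu_2}$ are linearly independent in $\F_p^2$ whenever $\nu_1 \neq \nu_2$, the matrix has rank $2$ modulo $p$, which is precisely the desired non-singularity.

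Alternatively, one can phrase the argument in the language introduced just before the statement: the set $\mathscr{K}_1 \cup \mathscr{K}_2$ itself constitutes a contraction from level $0$ to level at least $\gamma$, producing a variable which by construction can be traced back to variables at level $0$ of two different colours, hence a primary variable at level at least $\gamma$, so the already recorded observation immediately yields a non-singular solution. There is no genuine obstacle here; the only thing to watch is that the vectors $\boldsymbol{e}_0, \boldsymbol{e}_1, \ldots, \boldsymbol{e}_p$ are pairwise linearly independent over $\F_p$, which is immediate from their explicit definition $\boldsymbol{e}_0 = \binom{1}{0}$ and $\boldsymbol{e}_\nu = \binom{\nu}{1}$ for $1 \le \nu \le p$.
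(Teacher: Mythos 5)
Your argument is correct and is exactly the immediate one the paper has in mind: the text notes the lemma is "an immediate result from this definition" and simply cites Br\"udern--Godinho (Lemma 4.1 of their paper), whose proof is the same combination of the two single-colour contractions into one solution of \eqref{modeq} whose matrix columns span $\F_p^2$. Both your direct verification and your reformulation via a primary variable at level at least $\gamma$ are sound, since $\boldsymbol{e}_{\nu_1}$ and $\boldsymbol{e}_{\nu_2}$ are indeed linearly independent modulo $p$ for $\nu_1 \neq \nu_2$.
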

	\begin{proof}
        See~\cite[Lemma 4.1]{BundG2}.
	\end{proof}
   Using a theorem of Olson~\cite{Olson1}, they then provided a lower bound of the amount of variables at level $0$ of colour $\nu$ which are required in order to ensure that $\nu$ is zero-representing.
	\begin{Lemma}
	\label{zeropresenting}
		If $I_{\nu}^0\ge p^{\gamma} + p^{\gamma  -1} -1$, then the colour $\nu$ is zero-representing.
	\end{Lemma}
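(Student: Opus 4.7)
The plan is to reduce the claim to a direct application of Olson's theorem~\cite{Olson1} on a finite abelian $p$-group. The crucial arithmetic observation in the setting of this paper ($p \ge 3$, $\gamma = \tau + 1$ for $\tau \ge 1$, $k = p^\tau(p-1)$) is that $k = \phi(p^\gamma)$, so $\left(\Z/p^\gamma\Z\right)^*$ is cyclic of order $k$ and hence $y^k \equiv 1 \pmod{p^\gamma}$ whenever $p \nmid y$. It is therefore safe to set $y_j = 1$ in the definition of a contraction; proving that colour $\nu$ is zero-representing reduces to finding a nonempty subset $\mathscr{K}$ of the $I_\nu^0$ variables at level $0$ of colour $\nu$ such that $\sum_{j \in \mathscr{K}} a_j \equiv \sum_{j \in \mathscr{K}} b_j \equiv 0 \pmod{p^\gamma}$.

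Next, I would identify the subgroup of $\left(\Z/p^\gamma\Z\right)^2$ to which the coefficient vectors $(a_j, b_j)$ of these variables belong. Since all of them are of colour $\nu$, their level coefficient vectors are nonzero scalar multiples of $\boldsymbol{e}_\nu$ modulo $p$; equivalently, $b_j \equiv 0 \pmod p$ if $\nu = 0$, and $a_j \equiv \nu b_j \pmod p$ if $\nu \in \{1, \dots, p\}$. In either case the relevant subgroup $G$ of $\left(\Z/p^\gamma\Z\right)^2$ is abstractly isomorphic to $\Z/p^\gamma\Z \oplus \Z/p^{\gamma-1}\Z$: one coordinate ranges freely modulo $p^\gamma$, while the transverse direction is constrained to be divisible by $p$ and hence varies in $\Z/p^{\gamma-1}\Z$.

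Olson's theorem asserts that for any finite abelian $p$-group $H \cong \bigoplus_i \Z/p^{a_i}\Z$ every sequence of length at least $1 + \sum_i (p^{a_i} - 1)$ in $H$ admits a nonempty subsequence summing to $0$. Specialising to $H = G$ gives the bound $1 + (p^\gamma - 1) + (p^{\gamma-1} - 1) = p^\gamma + p^{\gamma-1} - 1$, exactly the hypothesis on $I_\nu^0$. Feeding the sequence of coefficient vectors of the colour-$\nu$ variables at level $0$ into Olson's theorem therefore produces the required $\mathscr{K}$, and then setting $y_j = 1$ for each $j \in \mathscr{K}$ supplies a contraction from level $0$ to a level at least $\gamma$.

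The only delicate point is the identification of the abelian-group structure of $G$ as $\Z/p^\gamma\Z \oplus \Z/p^{\gamma-1}\Z$ rather than some coarser invariant-factor decomposition; once this is pinned down, the numerical hypothesis on $I_\nu^0$ matches the Davenport constant of $G$ exactly and Olson's theorem closes the argument without further effort.
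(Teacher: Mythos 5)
Your proposal is correct: reducing zero-representation to a zero-sum problem (taking $y_j=1$, which is harmless since $y^k\equiv 1 \bmod p^{\gamma}$ here), identifying the colour-$\nu$ coefficient vectors as lying in a subgroup of $\left(\Z/p^{\gamma}\Z\right)^2$ isomorphic to $\Z/p^{\gamma}\Z \oplus \Z/p^{\gamma-1}\Z$, and invoking Olson's theorem with Davenport constant $p^{\gamma}+p^{\gamma-1}-1$ is exactly the argument behind the result. The paper itself only cites Lemma 4.2 of Br\"udern--Godinho, whose proof proceeds in essentially this same way via Olson's theorem, so your route matches the source rather than offering a genuinely different one.
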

	\begin{proof}
		See~\cite[Lemma 4.2]{BundG2}.
	\end{proof}
	Using these two lemmata and the theorem of Olson~\cite{Olson1} again, they concluded the following statement.
	\begin{Lemma}
	\label{q_0}
		If a pair $f, g$ has $q_0\ge 2p^{\gamma}-1$, then there exists a non-singular solution of \eqref{modeq}.
	\end{Lemma}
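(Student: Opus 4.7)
The plan is to combine Olson's theorem~\cite{Olson1} with Lemmata~\ref{twocolourszerorepresenting} and~\ref{zeropresenting}. The first step is a clean reduction. Since $p\ge 3$ and $k=p^{\tau}(p-1)=\varphi(p^{\gamma})$, Euler's theorem gives $y^{k}\equiv 1\pmod{p^{\gamma}}$ for every integer $y$ coprime to $p$. Consequently a subset $\mathscr{K}$ of level-$0$ variables is a contraction to level at least $\gamma$ exactly when
\begin{align*}
\sum_{j\in\mathscr{K}}\tilde a_{j}\equiv\sum_{j\in\mathscr{K}}\tilde b_{j}\equiv 0\pmod{p^{\gamma}},
\end{align*}
so producing contractions from level $0$ amounts to locating zero-sum subsequences of the sequence of level coefficient vectors $(\tilde a_{j},\tilde b_{j})$ in the group $(\Z/p^{\gamma}\Z)^{2}$. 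The key input from Olson is that the Davenport constant of $(\Z/p^{\gamma}\Z)^{2}$ equals $2p^{\gamma}-1$, which matches the hypothesis on $q_{0}$ exactly.

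Let $\nu_{\mx}$ be a colour realising $I_{\mx}^{0}$, so that exactly $q_{0}\ge 2p^{\gamma}-1$ variables at level $0$ have colour different from $\nu_{\mx}$. Olson's theorem then yields a non-empty zero-sum subset $\mathscr{K}_{1}$ of their level coefficient vectors, i.e.\ a contraction from level $0$ to level at least $\gamma$. I now split on the colour content of $\mathscr{K}_{1}$. If $\mathscr{K}_{1}$ meets at least two distinct colours, the contracted variable is primary and lives at level at least $\gamma$; the remark following the definition of a primary variable then immediately furnishes a non-singular solution of~\eqref{modeq}. Otherwise $\mathscr{K}_{1}$ is monochromatic of some colour $\nu_{1}\neq\nu_{\mx}$, which witnesses that $\nu_{1}$ is zero-representing, and Lemma~\ref{twocolourszerorepresenting} reduces the proof to exhibiting a second zero-representing colour.

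For that second colour, the easy sub-case is $I_{\mx}^{0}\ge p^{\gamma}+p^{\gamma-1}-1$, where Lemma~\ref{zeropresenting} renders $\nu_{\mx}$ zero-representing and we are done. The delicate sub-case, and where I expect the main obstacle to lie, is $I_{\mx}^{0}<p^{\gamma}+p^{\gamma-1}-1$: a monochromatic zero-sum supplied by Olson can be as long as $p^{\gamma}+p^{\gamma-1}-1$, so after excising $\mathscr{K}_{1}$ the residual non-max variables need not reach the Davenport threshold $2p^{\gamma}-1$ required for a second naive application of Olson. To push through, I would distribute the $q_{0}\ge 2p^{\gamma}-1$ non-max variables among the at most $p$ non-max colours and argue by contradiction: if every colour $\nu\neq\nu_{1}$ failed the hypothesis $I_{\nu}^{0}\ge p^{\gamma}+p^{\gamma-1}-1$ of Lemma~\ref{zeropresenting}, the resulting aggregate count $m_{0}\le(p+1)(p^{\gamma}+p^{\gamma-1}-2)+I_{\nu_{1}}^{0}$, combined with the lower bound $m_{0}\ge s/k>2k$ from Lemma~\ref{order} and $s>2k^{2}$, together with a judicious re-choice of $\mathscr{K}_{1}$ (for instance a zero-sum of minimal length) should force either a second colour to meet the Lemma~\ref{zeropresenting} threshold or the Olson-subsequence to be multicolour and thus yield a primary variable. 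Making this combinatorial count sharp, and in particular controlling the length of the monochromatic $\mathscr{K}_{1}$ via the structure of $\Z/p^{\gamma}\Z\oplus\Z/p^{\gamma-1}\Z$ inside a single colour, is the technical core of the argument.
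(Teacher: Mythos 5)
There is a genuine gap in the second half of your argument. The first half is fine and matches the intended route (the paper itself only cites Br\"udern--Godinho, Lemma 4.4, whose proof runs along exactly these lines): apply Olson's theorem to the $q_0\ge 2p^{\gamma}-1$ level-$0$ variables of colour different from $\nu_{\mx}$, note that with all $y_j=1$ a zero-sum subset is a contraction to level at least $\gamma$, and conclude that either the zero-sum set is multicoloured (primary variable at level at least $\gamma$, hence a non-singular solution) or some colour $\nu_1\neq\nu_{\mx}$ is zero-representing. (Your ``exactly when'' via Euler's theorem is harmless but unnecessary; only the ``if'' direction with $y_j=1$ is used, which also keeps the argument valid for general $k$ and the general $\gamma$ of \eqref{gamma}.) The gap is the production of the \emph{second} zero-representing colour: in the sub-case $I_{\mx}^0<p^{\gamma}+p^{\gamma-1}-1$ you only sketch a counting argument, and that sketch does not work. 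The hypotheses of Lemma~\ref{q_0} contain no global lower bound on $m_0$ beyond $m_0\ge q_0$, and even importing $m_0\ge s/k>2k$ from Lemma~\ref{order} gives roughly $2p^{\gamma}$ variables, far below the roughly $(p+1)p^{\gamma}$ needed to contradict ``every colour other than $\nu_1$ has fewer than $p^{\gamma}+p^{\gamma-1}-1$ variables''; nor does minimality of $\mathscr{K}_1$ rescue the count, since after removing a monochromatic zero-sum of length up to $p^{\gamma}+p^{\gamma-1}-1$ the remaining non-max variables can fall below the Davenport threshold $2p^{\gamma}-1$.

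The missing idea is a simple change of pivot: once $\nu_1$ is zero-representing, do not stay inside the complement of $\nu_{\mx}$, but apply Olson's theorem a second time to the set of all level-$0$ variables of colour different from $\nu_1$. Since $I_{\nu_1}^0\le I_{\mx}^0$, this set has at least $m_0-I_{\nu_1}^0\ge m_0-I_{\mx}^0=q_0\ge 2p^{\gamma}-1$ elements, so it contains a non-empty zero-sum subset, i.e.\ a contraction to level at least $\gamma$. If that subset meets two colours, you have a primary variable and hence a non-singular solution of \eqref{modeq}; if it is monochromatic, its colour $\nu_2\neq\nu_1$ is zero-representing, and Lemma~\ref{twocolourszerorepresenting} finishes the proof. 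With this pivot the case distinction on the size of $I_{\mx}^0$, and Lemma~\ref{zeropresenting} altogether, become unnecessary for this lemma.
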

	\begin{proof}
		See~\cite[Lemma 4.4]{BundG2}
	\end{proof}
	Therefore, it suffices to focus on $p$-normalised forms $f, g$ that fulfil the properties of Lemma~\ref{properties} and have $q_0 \le 2p^{\gamma} -2$.
	\section{Combinatorial Results}
	\label{Combinatorial Results}
	This section contains a collection of lemmata with combinatorial results on congruences modulo~$p$ and $p^2$ for primes $p$, which will later be convenient for finding contraction in certain sets.
	\begin{Lemma}
		\label{nicht0}
		Let $n > \textup{ggT}\left( k, p-1 \right)$ and $c_1, \dots , c_n$ be any integers coprime to $p$. Then, the congruence 
		\begin{align*}
			c_1x_1^k + \dots + c_n x_n^k \equiv 0 \mod p
		\end{align*}
		has a solution with $x_1 \nequiv 0 \bmod p$.
	\end{Lemma}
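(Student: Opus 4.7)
The plan is to fix $x_1 = 1$, reducing the claim to showing that $-c_1$ lies in the sumset $c_2 K + c_3 K + \cdots + c_n K$ inside $\F_p$, where $K := \{x^k : x \in \F_p\}$ denotes the set of $k$-th power residues together with $0$. Indeed, any $x_2, \ldots, x_n \in \F_p$ satisfying $c_2 x_2^k + \cdots + c_n x_n^k \equiv -c_1 \bmod p$ yields, together with $x_1 = 1$, the required solution, since $1 \nequiv 0 \bmod p$.

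Setting $\delta := \textup{gcd}(k, p-1)$, the non-zero $k$-th powers form the unique subgroup of $\F_p^\ast$ of order $(p-1)/\delta$, so $|K| = (p-1)/\delta + 1$, and each dilate $c_i K$ has the same cardinality because the $c_i$ are coprime to $p$. I would then invoke the iterated Cauchy--Davenport inequality on the $n - 1$ sets $c_2 K, c_3 K, \ldots, c_n K$ to obtain
\begin{align*}
    |c_2 K + c_3 K + \cdots + c_n K| \ge \min\!\left( p,\, (n-1)(|K|-1) + 1 \right) = \min\!\left( p,\, \tfrac{(n-1)(p-1)}{\delta} + 1 \right).
\end{align*}
Under the hypothesis $n > \delta$, that is $n - 1 \ge \delta$, the right-hand argument of the minimum is at least $p$, so the sumset exhausts $\F_p$ and in particular contains $-c_1$.

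There is no substantial obstacle here, since the argument reduces to a direct application of Cauchy--Davenport. The threshold $n > \delta$ in the hypothesis is precisely what makes the bound just reach $p$, so the estimate is tight and matches the hypothesis of the lemma exactly; any weaker bound on $n$ would fail to force the sumset to cover the residue class containing $-c_1$.
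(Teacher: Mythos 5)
Your proof is correct: fixing $x_1=1$ and applying iterated Cauchy--Davenport to the dilates $c_iK$ of the set $K$ of $k$-th powers (including $0$), so that $n-1\ge\delta=\gcd(k,p-1)$ forces the sumset to be all of $\F_p$ and hence to contain $-c_1$, is a complete argument. The paper offers no proof of its own but merely cites Lemma 1 of Davenport and Lewis, and your sumset-growth argument is essentially the classical reasoning behind that cited lemma, so the two approaches coincide in substance.
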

	\begin{proof}
		See~\cite[Lemma 1]{homaddeq}.
	\end{proof}
	\begin{Lemma}
        \label{p^n}
        Let $\alpha_{ij} \in \Z$ for $1 \le i \le n$ and  $ 1\le j \le s$  with $s \ge np-n+1$. Then the equation
        \begin{align*}
            \sum_{j=1}^s \varepsilon_j \begin{pmatrix}
                                        \alpha_{1j}\\ \vdots \\ \alpha_{nj} 
                                       \end{pmatrix} \equiv 0 \mod p
        \end{align*}
        has a solution with $\varepsilon_j \in \left\{0,1\right\}$ for $ 1\le j \le s$ and some $\varepsilon_j \neq 0$.
	\end{Lemma}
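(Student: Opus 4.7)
The plan is to deduce the statement from the Chevalley--Warning theorem applied to a cleverly chosen system of $n$ polynomials over $\F_p$ in $s$ variables, using Fermat's little theorem to convert arbitrary $\F_p$-valued solutions into $\{0,1\}$-valued coefficients.

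First I would define, for $i = 1, \dots, n$, the polynomials
\begin{align*}
    f_i(x_1, \dots, x_s) = \sum_{j=1}^s \alpha_{ij} x_j^{p-1} \in \F_p[x_1, \dots, x_s].
\end{align*}
Each $f_i$ has total degree $p-1$, so the sum of the degrees is $n(p-1) = np - n$. The hypothesis $s \ge np-n+1$ yields $\deg f_1 + \dots + \deg f_n < s$, which is exactly the inequality needed to invoke the Chevalley--Warning theorem. That theorem then gives that the number of common zeros of the $f_i$ in $\F_p^s$ is divisible by $p$. Since the trivial tuple $(0,\dots,0)$ is a common zero, there exists at least one further common zero $(x_1, \dots, x_s) \in \F_p^s$ with some $x_{j_0} \neq 0$.

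Next I would use Fermat's little theorem to set $\varepsilon_j := x_j^{p-1} \bmod p$, noting that $\varepsilon_j = 0$ if $x_j \equiv 0 \bmod p$ and $\varepsilon_j = 1$ otherwise. In particular $\varepsilon_j \in \{0,1\}$ for all $j$, and $\varepsilon_{j_0} = 1$, so not all $\varepsilon_j$ vanish. The identity $f_i(x_1, \dots, x_s) \equiv \sum_{j=1}^s \alpha_{ij} \varepsilon_j \bmod p$ holds for every $i$, and the left-hand side is zero modulo $p$ by construction, giving the vector congruence claimed in the lemma.

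The main (and essentially only) subtlety is verifying the degree inequality required by Chevalley--Warning, which is exactly the reason the bound $s \ge np - n + 1$ is sharp; beyond this the argument is entirely mechanical. No alternative elementary argument seems shorter, and this approach has the further advantage of explaining the precise form of the threshold $np - n + 1$, which one recognises as the Davenport constant of $(\Z/p\Z)^n$.
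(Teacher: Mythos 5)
Your argument is correct, but it is a genuinely different route from the paper's. The paper disposes of this lemma in one line by citing Olson's theorem on the Davenport constant of finite abelian $p$-groups, specialised to $G=\left(\Z/p\Z\right)^n$; no proof is reproduced. You instead give a self-contained proof via Chevalley--Warning: the forms $f_i=\sum_{j=1}^s \alpha_{ij}x_j^{p-1}$ have total degree sum $n\left(p-1\right)=np-n<s$, so the common zeros in $\F_p^s$ number a multiple of $p$ and hence include a non-trivial one, and Fermat's little theorem turns $x_j^{p-1}$ into the desired $\varepsilon_j\in\{0,1\}$ with $\varepsilon_{j_0}=1$. All the steps check out: the degree inequality is exactly the hypothesis $s\ge np-n+1$, and the passage from a non-trivial zero to a non-empty zero-sum subset is sound. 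What each approach buys: the citation of Olson gives the statement as a special case of a stronger result (the exact Davenport constant of every finite abelian $p$-group, whose proof uses group-ring methods and is not elementary), while your argument is short, elementary, and proves precisely the inequality $D\left(\left(\Z/p\Z\right)^n\right)\le n\left(p-1\right)+1$ that the lemma needs -- indeed it is the classical polynomial-method proof of that bound. Your closing remark about sharpness of $np-n+1$ is true but neither needed nor established by your argument (sharpness is the lower-bound direction, witnessed by a sequence of $n\left(p-1\right)$ standard basis vectors each repeated $p-1$ times); it would be cleaner to omit it or flag it as an aside.
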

    \begin{proof}
        This is the special case $G = \left(\Z/p\Z\right)^n $ of the theorem of Olson~\cite{Olson1}.
    \end{proof}
    \begin{Lemma}
        \label{FS1C}
        Let $ s \ge 3p-2$ and $a_j, b_j \in \Z$ for $1 \le j \le s$. Then there exists a non-empty subset $J \subset \{1, 2, \dots, s \}$ with $|J| \le p$ and $\sum_{j \in J} a_j \equiv \sum_{j \in J} b_j \equiv 0 \bmod p$.
	\end{Lemma}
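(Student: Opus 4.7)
I propose to argue by contradiction and apply the Chevalley--Warning theorem twice. Suppose no nonempty $J\subseteq\{1,\ldots,s\}$ with $|J|\le p$ satisfies $\sum_{j\in J} a_j \equiv \sum_{j\in J} b_j \equiv 0\pmod p$; without loss of generality take $s=3p-2$, since discarding extra indices only makes the problem harder. Write $N_k$ for the number of subsets of size $k$ meeting both congruences (the ``good'' subsets); by assumption $N_1=N_2=\cdots=N_p=0$.

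For the first application, consider over $\F_p$ the three polynomials
\[
F_1 = \sum_{j=1}^s a_j x_j^{p-1},\qquad F_2 = \sum_{j=1}^s b_j x_j^{p-1},\qquad F_3 = \sum_{j=1}^s x_j^{p-1},
\]
each of degree $p-1$. Since $3(p-1)<s$, Chevalley--Warning guarantees that the total number of common zeros in $\F_p^s$ is divisible by $p$. A common zero with support $J=\{j:x_j\neq 0\}$ forces $\sum_J a_j\equiv \sum_J b_j\equiv 0$ and $|J|\equiv 0\pmod p$, with $(p-1)^{|J|}$ choices of nonzero values at the positions of $J$. Only $|J|\in\{0,p,2p\}$ fit the range, so the total count equals $1+N_p(p-1)^p+N_{2p}(p-1)^{2p}$. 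Using $(p-1)^p\equiv -1$ and $(p-1)^{2p}\equiv 1\pmod p$ transforms this into $N_p\equiv 1+N_{2p}\pmod p$. Under the assumption $N_p=0$ this forces $N_{2p}\equiv -1\pmod p$, in particular $N_{2p}\ge 1$.

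The second application fixes a good subset $J_0$ of size $2p$ and restricts $F_1,F_2$ to the $2p$ variables indexed by $J_0$: two polynomials of degree $p-1$ in $2p$ variables, with $2(p-1)<2p$, so the number of common zeros in $\F_p^{2p}$ is again divisible by $p$. The analogous counting yields
\[
1 + \sum_{\emptyset\neq J_1\subseteq J_0,\ J_1\text{ good}} (-1)^{|J_1|} \equiv 0 \pmod p.
\]
The term $J_1=J_0$ contributes $(-1)^{2p}=1$, so the sum over \emph{proper} nonempty good $J_1\subsetneq J_0$ must be $\equiv -2\pmod p$. For $p\ge 3$ this is nonzero, so some such $J_1$ exists. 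Then $J_0\setminus J_1$ is also good and nonempty, and since $|J_1|+|J_0\setminus J_1|=2p$, at least one of the two has size $\le p$, contradicting the initial assumption.

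The main subtlety lies in this second Chevalley--Warning step: extracting a \emph{proper} nonempty good subset of $J_0$ rests on $-2\not\equiv 0\pmod p$, which forces $p\neq 2$. This is harmless in the setting of the paper, where $p\ge 5$; in particular the two-step Chevalley--Warning argument goes through without modification.
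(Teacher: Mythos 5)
Your proof is correct, and it takes a genuinely different route from the paper: the paper disposes of this lemma by citing Olson's Lemma 1.1, whereas you give a self-contained argument by the polynomial method. Your two Chevalley--Warning applications check out: with $s=3p-2$ the degrees satisfy $3(p-1)<s$, a common zero of $F_1,F_2,F_3$ is determined by a good support $J$ with $|J|\equiv 0 \bmod p$, and since $s<3p$ only $|J|\in\{0,p,2p\}$ occur, so the count $1+N_p(p-1)^p+N_{2p}(p-1)^{2p}\equiv 1-N_p+N_{2p} \bmod p$ (using that $p$ is odd) forces a good $J_0$ of size $2p$ once $N_p=0$; the second application, with $2(p-1)<2p$, yields $1+\sum_{\emptyset\neq J_1\subseteq J_0 \textup{ good}}(-1)^{|J_1|}\equiv 0 \bmod p$, and peeling off the term $J_1=J_0$ gives a proper non-empty good $J_1$, whence $J_1$ or $J_0\setminus J_1$ is a non-empty good set of size at most $p$ -- a clean contradiction. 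The only genuine limitation is the one you flag yourself: the signs $(p-1)^p\equiv -1$ and the final $-2\nequiv 0$ need $p$ odd, whereas Olson's lemma holds for all primes; since the paper invokes this lemma only for $p\ge 5$, that restriction costs nothing here. What your route buys is independence from the cited literature (an elementary, verifiable proof of the combinatorial input $\eta$-type bound $3p-2$ for $(\Z/p\Z)^2$); what the citation buys the paper is brevity and validity for $p=2$ as well.
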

    \begin{proof}
        See~\cite[Lemma 1.1]{Olson2}.
	\end{proof}
    \begin{Lemma}
        \label{FS4}
        Let $d_j \in \Z\backslash p \Z$ for $ 1\le j \le 3p-2$. Then there exists a non-empty subset $J \subset \left \{ 1, \cdots, 3p-2\right \}$ with $|J| \le p$,
        \begin{align*}
            \sum_{j \in J} d_j \equiv 0 \mod p \qquad \textup{ and } \qquad \sum_{j \in J} d_j \nequiv 0 \mod p^2.
        \end{align*}
    \end{Lemma}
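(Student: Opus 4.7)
My plan is to reduce Lemma~\ref{FS4} to Lemma~\ref{FS1C} via a base-$p$ digit decomposition of the $d_j$. After replacing each $d_j$ by its representative modulo $p^2$, I would write $d_j=d_j^{(0)}+p\,d_j^{(1)}$, where $d_j^{(0)}\in\{1,\ldots,p-1\}$ (possible because $p\nmid d_j$) and $d_j^{(1)}\in\{0,\ldots,p-1\}$. Applying Lemma~\ref{FS1C} to the two integer sequences $a_j:=d_j^{(0)}$ and $b_j:=d_j^{(1)}$, which is admissible since $s=3p-2$, yields a nonempty $J\subset\{1,\ldots,3p-2\}$ with $|J|\le p$ satisfying
\begin{align*}
\sum_{j\in J}d_j^{(0)} \equiv \sum_{j\in J}d_j^{(1)} \equiv 0 \mod p.
\end{align*}

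The first congruence already provides the required $\sum_{j\in J}d_j\equiv 0\mod p$, which is half of the conclusion. For the non-vanishing modulo $p^2$, the decisive observation is a size bound: because every $d_j^{(0)}$ lies in $\{1,\ldots,p-1\}$ and $|J|\le p$, the integer $\sum_{j\in J}d_j^{(0)}$ is positive and strictly less than $p^2$. Combined with its divisibility by $p$, this forces $\sum_{j\in J}d_j^{(0)}=pT$ for some $T\in\{1,\ldots,p-1\}$. Using also the second congruence, I would then compute
\begin{align*}
\sum_{j\in J}d_j = pT + p\sum_{j\in J}d_j^{(1)} \equiv pT \mod p^2,
\end{align*}
which is nonzero modulo $p^2$ since $T\not\equiv 0\mod p$.

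There is no substantial obstacle once the decomposition has been spotted; the argument essentially matches the two mod-$p$ congruences produced by Lemma~\ref{FS1C} with the two base-$p$ digits of the sum $\sum_{j\in J}d_j$. The only point requiring care is verifying that the ``leading digit'' $T$ is automatically nonzero, which is exactly why the a priori bounds $1\le d_j^{(0)}\le p-1$ and $|J|\le p$ must be tracked through the argument.
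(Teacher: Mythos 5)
Your argument is correct and complete. Note that the paper itself does not prove Lemma~\ref{FS4} at all: it simply cites Lemma~3.7 of Godinho and de Souza Neto, so there is no in-house proof to compare against; what you give is a legitimate self-contained derivation from material that is available in the paper. Your route — write each $d_j$ modulo $p^2$ as $d_j^{(0)}+p\,d_j^{(1)}$ with $d_j^{(0)}\in\{1,\dots,p-1\}$, apply the $\left(\Z/p\Z\right)^2$ zero-sum statement of Lemma~\ref{FS1C} to the digit pairs, and then use $1\le \sum_{j\in J}d_j^{(0)}\le p\left(p-1\right)<p^2$ to force the leading digit $T$ to be nonzero — is exactly in the spirit of the techniques the author uses elsewhere, most visibly in the proof of Lemma~\ref{FS5}, where the corresponding integers are split as $c_i=d_i+pf_i$ and Lemma~\ref{p^n} is applied to the digit vectors; there the author must restrict the $d_i$ to $\{1,\dots,\frac{p-1}{2}\}$ because the subset can have up to $2p$ elements, whereas in your setting $|J|\le p$ makes the bound $\sum_{j\in J}d_j^{(0)}<p^2$ automatic, which is the one point that genuinely needs (and gets) attention in your write-up. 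One cosmetic remark: the displayed identity $\sum_{j\in J}d_j=pT+p\sum_{j\in J}d_j^{(1)}$ is an equality only for the representatives you substituted at the outset; for the original $d_j$ it is a congruence modulo $p^2$, which is all you need since the conclusion of the lemma is stated modulo $p$ and $p^2$ — harmless, but worth phrasing as a congruence.
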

    \begin{proof}
        See~\cite[Lemma 3.7]{GodSouNet}.
    \end{proof}
	\begin{Lemma}
		\label{2p-1fuerp=5}
		Let $d_j \in \Z\backslash 5 \Z$ for $ 1\le j \le 9$. Then there exists a non-empty subset $J \subset \left \{ 1, \cdots, 9\right \}$ with $|J| \le 5$,
		\begin{align*}
		\sum_{j \in J} d_j \equiv 0 \mod 5 \qquad \textup{ and } \qquad \sum_{j \in J} d_j \nequiv 0 \mod 25.
		\end{align*}
	\end{Lemma}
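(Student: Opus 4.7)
My plan is a case analysis on the distribution of the residues modulo $5$. Set $r_j := d_j \bmod 5 \in \{1,2,3,4\}$ and $n_r := |\{j : r_j = r\}|$, so $n_1 + n_2 + n_3 + n_4 = 9$. Two regimes arise according to whether some residue class already contains at least five elements.

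In the first regime some $n_r \ge 5$. I pick five indices $j_1,\ldots,j_5$ from that class and write $d_{j_i} = r + 5 c_i$ for integers $c_i$. Their sum is $\equiv 5(r + C) \bmod 25$ with $C \equiv c_1 + \cdots + c_5 \bmod 5$. If every choice of five indices from the class produces the same value of $C$, the $c_i$ in the class must all coincide mod $5$, whence $C \equiv 5c \equiv 0 \bmod 5$ and the sum is $\equiv 5r \not\equiv 0 \bmod 25$ since $r \in \{1,2,3,4\}$. Otherwise, swapping two $c$-values changes $C$, so one can arrange $C \not\equiv -r \bmod 5$ and again get a nonzero sum modulo $25$.

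In the second regime every $n_r \le 4$, so $(n_1,n_2,n_3,n_4)$ lies in the short list of partitions of $9$ into parts at most $4$: up to permutation $(4,4,1,0)$, $(4,3,2,0)$, $(4,3,1,1)$, $(4,2,2,1)$, $(3,3,3,0)$, $(3,3,2,1)$, $(3,2,2,2)$. For each, I would exhibit several zero-sum subsets modulo $5$ of length $\le 5$ built from the available residue combinations --- pairs $\{1,4\}$ or $\{2,3\}$, triples such as $\{1,2,2\}$, $\{1,1,3\}$, $\{2,4,4\}$, $\{3,3,4\}$, and quadruples such as $\{1,1,1,2\}$ --- and argue that they cannot all vanish modulo $25$ simultaneously. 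The prototype argument, illustrated on the $(4,4,1,0)$ distribution, runs as follows: vanishing of all $\{2,3\}$-pairs modulo $25$ would force the residue-$2$ elements to share a common value $d_2 \equiv -d_3 \bmod 25$; if all $\{1,1,1,2\}$-quadruples also vanished, the residue-$1$ elements would share a common value $d_1$ with $3 d_1 \equiv d_3 \bmod 25$; testing the $\{1,2,2,2,3\}$-quintuple would then require $d_1 \equiv 2 d_3 \bmod 25$; combining the two relations yields $5 d_3 \equiv 0 \bmod 25$, contradicting $d_3 \not\equiv 0 \bmod 5$.

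The chief obstacle is the combinatorial bookkeeping in the second regime: one must, for each of the seven partitions, pick two (or three) competing zero-sum subsets whose simultaneous mod-$25$ vanishing is arithmetically impossible, and the right pair depends on which of $n_1,\ldots,n_4$ are nonzero. Conceptually, because the set of nonzero residues modulo $5$ has only four elements, a sequence of nine nonzero residues always contains enough zero-sum subsets of distinct shape to preclude universal vanishing modulo $25$, which is what allows the hypothesis to be weakened from $3p-2 = 13$ in Lemma \ref{FS4} to $2p-1 = 9$ in the special case $p=5$.
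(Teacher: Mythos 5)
Your first-regime argument has a genuine hole at $n_r=5$. When the distinguished residue class contains exactly five elements there is only one way to choose five indices from it, so the statement ``every choice of five indices produces the same value of $C$'' is vacuously true and does \emph{not} force the $c_i$ to coincide modulo $5$; there is nothing to swap. Concretely, if the class is $(1,1,1,1,21)$ then its unique five-element subset sums to $25\equiv 0 \bmod 25$, and no proper subset of the class is a zero sum modulo $5$ (a subset of $t\le 4$ elements of residue $r$ has sum $\equiv tr\nequiv 0$). In this situation the required subset must involve the four elements lying in the other residue classes, so the case $n_r=5$ cannot be settled inside the class at all and really belongs to the same kind of combinatorial analysis as your second regime. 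The swapping argument is sound only for $n_r\ge 6$.

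The second regime is, as you concede, only a plan: of the seven partition shapes you list, just $(4,4,1,0)$ is carried through, and only for one labelling of the residues (that prototype computation with the pairs $\{2,3\}$, the quadruples $\{1,1,1,2\}$ and the quintuple $\{1,2,2,2,3\}$ is correct). Note that different labellings are not all interchangeable: multiplying every $d_j$ by a unit modulo $25$ permutes the nonzero residue classes only through the cyclic action $(1\,2\,4\,3)$, so for instance $(n_1,n_2,n_3,n_4)=(4,4,1,0)$ and $(4,1,4,0)$ are inequivalent configurations, each needing its own choice of competing zero-sum subsets, and likewise for the remaining shapes and for the leftover $n_r=5$ configurations. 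So what you have is an outline with one worked example rather than a proof. For comparison, the paper does not prove the lemma either; it cites it as Proposition 3.1 of Godinho and de Souza Neto~\cite{GundNet}, where exactly this sort of exhaustive analysis (sharpening the $3p-2$ of Lemma~\ref{FS4} to $2p-1$ for $p=5$) is carried out, so a complete write-up along your lines would have to do the full bookkeeping or reproduce that reference's argument.
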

	\begin{proof}
		See~\cite[Proposition 3.1]{GundNet}
	\end{proof}
    \section{Strategy}
    \label{Strategy}
        This section contains a general description of the remainder of the proof, for which further notation is introduced. Assume for the remainder of this paper that $\tau\ge1$ is an integer, $p\ge 5$ a prime and $k=p^{\tau}\left(p-1\right)$. This will not be repeated in the following but nonetheless assumed in all following lemmata.
        \begin{Definition}
        	A $p$-normalised pair of additive forms $f, g$ as in \eqref{f,g} is called a \textit{proper $p$-normalised pair} if $s\ge 2k^2+1$, $q_0\le 2p^{\tau+1}-2$ and it satisfies the properties of Lemma \ref{properties}. 
        \end{Definition}
        The restrictions on $k$, $p$ and $\tau$ show that $\gamma=\tau+1$. Therefore, it follows from Lemmata \ref{pnormreicht}, \ref{properties} and \ref{q_0}, that it suffices to prove for every proper $p$-normalised pair $f, g$ that the equations $f=g=0$ have a non-trivial $p$-adic solution.
        
        The bound $s \ge 2k^2+1$ and Lemma \ref{order} show, that a proper $p$-normalised pair has the lower bounds
        \begin{align*}
            m_0 + \dots + m_j &\ge \left(2j+2\right)p^{\tau+1}-\left(2j+2\right) p^{\tau}+1,\\
            m_0 + \dots + m_{j-1} + q_j &\ge \left(2j+1\right) p^{\tau+1} -\left(2j+1 \right) p^\tau +1
        \end{align*}
        for $j \in \{0, \dots, k-1\}$ and Lemma \ref{properties} provides furthermore
        \begin{align}
        \label{NichtI_00Grenze}
            I_0^0 - I_{00}^0 \ge 2p^{\tau+1} -2p^{\tau} -q_0-\left(m_1-I_0^1\right).
        \end{align}
        To find a non-trivial $p$-adic solution for a proper $p$-normalised pair, it suffices, due to Lemma~\ref{Hensel}, to show that a non-singular solution exists. Using contractions as described in Section \ref{Coloured variables and Contractions}, this can be done by showing, that one can construct a primary variable at level $\tau+1$. 
        
        In the following there will be two different strategies to construct a primary variable at level at least $\tau+1$. For the first, one contracts the variables at level $0$ to primary variables at level at least $1$. Using contractions recursively, one can obtain  primary variables at higher levels, until one eventually reaches at least level $\tau+1$. 
	
        The second strategy will be used if $I_{0}^0\ge p^{\tau+1}+p^\tau-1$. By Lemma \ref{zeropresenting} with $\gamma=\tau+1$, it follows that the colour $0$ is zero-representing. In this case it suffices to have a contraction to a variable at level at least $\tau+1$, which can be traced back to at least one variable at level $0$ of a different colour than $0$. If such a variable can also be traced back to a variable at level $0$ of colour $0$, the variable is already primary. Else, there is a contraction to another variable at level at least $\tau+1$, using only the variables at level $0$ of colour $0$. Setting both of these variables $1$ and everything else zero proves, that there is a non-singular solution of $f= g = 0$.
	\begin{Definition}
        A variable which is either a variable at level $0$ of a different colour than $0$ or can be traced back to one is called \textit{colourful}.
	\end{Definition}
	\noindent Thus, if $I_0^0\ge p^{\tau+1}+p^\tau-1$, the goal is to create a colourful variable at level at least $\tau+1$.
	
	The gain of this second strategy are the variables at level $0$ of colour $0$. To contract primary variables at level at least $1$, one usually uses the variables at level $0$. If the goal is only to contract colourful variables at level at least $1$, it will suffice to use the $q_0$ variables at level $0$ which are colourful. Then, the variables at level $0$ of colour $0$ can be used to create variables at a higher level, to help contracting the colourful variables to colourful variables at an even higher level, until one eventually contracts them to a colourful variable at level at least $\tau+1$. This works, because then, one encounters one of the following two scenarios. Either the colourful variable at level at least $\tau+1$ can be traced back to a variable at level $0$ of colour $0$. Then one has used one of those variables, which were created using the variables at level $0$ of colour $0$, some way along the way, and the colourful variable at level at least $\tau+1$ is also primary. If on the other hand, the colourful variable at level at least $\tau+1$ cannot be traced back to a variable at level $0$ of colour $0$, those helpful variables were not needed, to create a colourful variable at level at least $\tau+1$. Hence, one can create a colourful variable at level at least $\tau+1$, without using any of the variables at level $0$ of colour $0$, which still enables one to create a variable at level at least $\tau+1$, using only those.
	
	The process of creating a colourful or primary variable at level at least $\tau+1$ will follow the same pattern. If one has a colourful or primary variable at level at least $l$, either this variable is already at level at least $l+1$, or one tries to find a contraction to a variable at level at least $l+1$, which contains the colourful or primary variable and thus ensures, that the resulting variable at level $l+1$ is colourful or primary, as well. To find such a contraction, one needs to guarantee, that there are other variables at the same level with certain properties. Thus, one differs between the colourful and primary variables, for which one only needs to know a lower bound of their level, and the remaining variables, which will be useful, to contract colourful or primary variables to colourful and primary variables at a higher level. For them it is important to know the precise level they are at. This will be considered by the following notation.
	
	A primary variable at level at least $l$ of colour nuance $\left(\nu, \mu\right)$ will be denoted by $P_{\nu\mu}^l$, whereas a colourful variable which otherwise has the same properties will be denoted by $C_{\nu\mu}^l$. The notation $E_{\nu \mu}^l$ will be used to describe a variable at the exact level $l$ of colour nuance $\left(\nu, \mu\right)$. Note that for $S \in \{C, P\}$ a variable of type $S_{\nu \mu}^l$ can either be of type $S_{\nu\mu}^{l+1}$ or of type $E_{\nu\mu}^l$, but not both. It will be said throughout the proof that a set of variables contracts to a variable with certain properties, if one the following cases occur. Either one of the variables in the set is already a variable with the desired properties, or the set of indices of these variables contains a contraction to a variable with these properties. This will help to minimize the amount of cases in which one has to distinguish between an $S_{\nu \mu}^l$ variables being of type $S_{\nu \mu}^{l+1}$ or $E_{\nu\mu}^l$ for $S \in \{C,P\}$. Sometimes one only wants to establish the level and the colour of one variable. Then, this is denoted by $P_\nu^l$, $C_\nu^l$ or $E_\nu^l$. If even the colour is of no importance, such a variable is said to be of type $P^l$, $C^l$ or $E^l$. In some cases, one has to denote, that a variable of type $E^l$ is not of colour $\nu$, or that a variable of type $E_{\nu}^l$ is not of colour nuance $\left(\nu, \mu\right)$. This is denoted by  $E_{\bar \nu}^l$ and $E_{\nu \bar{\mu}}^l$, respectively.
	
	It will turn out, that the number of $C^1$ and $P^1$ variables one can contract the $E^0$ variables to is at least partly dependent on the parameter $q_0$. Therefore, it will be useful to define a further parameter $r=r\left(f, g\right)$ for a pair $f, g$ which restricts the area for $q_0$ to
    \begin{align}
	\label{r}
	p^{\tau+1} +r p^\tau \le q_0 \le p^{\tau+1} +\left( r+1\right)  p^\tau -1.
	\end{align}
	For a proper $p$-normalised pair $f, g$ it follows that $r = r \left(f, g\right)\in \{-1, 0, 1, \dots, p-1\}$ due to $p^{\tau+1} -p^{\tau}+1\le q_0 \le 2p^{\tau+1} -2$.
        
    \section{Contraction Related Auxiliaries}
    \label{Contraction Related Auxiliaries}
    This section is a compilation of settings in which sets of variables contract to variables at a higher level.
    \subsection{Contracting One Specific Variable}
    The lemmata in this subsection describe situations in which one contracts sets of variables to one variable with specific properties.
	\begin{Lemma}
		\label{TAEL3}
		Let $\mathscr{K}$ be a set of indices of $E^l$ variables. If $|\mathscr{K}| \ge 2p-1$ and $q\left( \mathscr{K}\right) \ge p$, then $\mathscr{K}$ contains a contraction $J$ to a variable at level at least $l+1$, such that $J$ contains variables of at least two different colours.
	\end{Lemma}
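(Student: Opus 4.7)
The plan is to translate the contraction condition into a zero-sum problem in $\F_p^2$ and combine Olson's theorem with a projection/Davenport argument. For $E^l$ variables the coefficients are $a_j = p^l \tilde a_j$ and $b_j = p^l \tilde b_j$, so after cancelling $p^l$ and using Fermat's little theorem $y_j^k \equiv 1 \pmod p$ (valid since $k = p^\tau(p-1)$ and $p \nmid y_j$), the congruences $\sum_{j \in J} a_j y_j^k \equiv \sum_{j \in J} b_j y_j^k \equiv 0 \pmod{p^{l+1}}$ reduce to $\sum_{j \in J}(\tilde a_j,\tilde b_j) \equiv (0,0) \pmod p$ in $\F_p^2$. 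The additional requirement is that the variables indexed by $J$ represent at least two colour classes.

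Since $|\mathscr{K}| \ge 2p-1$, Lemma \ref{p^n} with $n = 2$ yields a non-empty $J_0 \subseteq \mathscr{K}$ with $\sum_{J_0}(\tilde a_j,\tilde b_j) \equiv 0 \pmod p$. If the variables of $J_0$ already represent two distinct colours, take $J = J_0$ and stop. Otherwise $J_0 \subseteq \mathscr{K}_{\nu_0}$ for some single colour $\nu_0$, and from $|\mathscr{K}_{\nu_0}| \le I_\mx(\mathscr{K}) = |\mathscr{K}| - q(\mathscr{K})$ together with $q(\mathscr{K}) \ge p$ one obtains $|W| \ge p$ for the complement $W := \mathscr{K} \setminus \mathscr{K}_{\nu_0}$.

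Next I would project along $\boldsymbol e_{\nu_0}$ via the quotient $\pi \colon \F_p^2 \to \F_p^2/\langle \boldsymbol e_{\nu_0} \rangle \cong \F_p$, so that the images of $(\tilde a_j,\tilde b_j)$ for $j \in W$ form a sequence of at least $p$ non-zero elements of $\F_p$. Applying Lemma \ref{p^n} with $n = 1$ (i.e.\ Davenport's constant of $\F_p$) produces a non-empty $J_1 \subseteq W$ with $\sum_{J_1}(\tilde a_j,\tilde b_j) = t\, \boldsymbol e_{\nu_0}$ for some $t \in \F_p$. If $t = 0$, then $J_0$ and $J_1$ are disjoint zero-sums whose union $J := J_0 \cup J_1$ contains both a $\nu_0$-coloured variable (from $J_0$) and a non-$\nu_0$-coloured variable (from $J_1$), so it does the job. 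If $t \ne 0$ I would look for $J_2 \subseteq \mathscr{K}_{\nu_0}$ whose scalar sum equals $-t$ and take $J := J_1 \cup J_2$, which again spans two colours.

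The main obstacle will be producing $J_2$ in the case $t \ne 0$. Cauchy--Davenport applied to the scalars of $\mathscr{K}_{\nu_0}$ shows that every element of $\F_p$ arises as a subset sum once $|\mathscr{K}_{\nu_0}| \ge p-1$, which settles most configurations. In the residual case $|\mathscr{K}_{\nu_0}| \le p-2$ one automatically has $|W| \ge p+1$, so the quotient sequence admits many different non-empty zero-sum subsequences $J_1$, producing several attainable values of $t$; a pigeonhole comparison between the set of attainable $t$-values and the forbidden complement $\F_p \setminus (-S')$, where $S'$ denotes the set of subset sums of the scalars in $\mathscr{K}_{\nu_0}$, should then force some $t$ with $-t \in S'$. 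Making this last step rigorous is the delicate point, and it is where the hypotheses $|\mathscr{K}| \ge 2p-1$ and $q(\mathscr{K}) \ge p$ both come essentially into play.
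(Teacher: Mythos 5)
Your reduction and the first steps are sound: since $\left(p-1\right)\mid k$, one has $y_j^k\equiv 1\bmod p$ whenever $p\nmid y_j$, so a contraction of $E^l$ variables to level at least $l+1$ is indeed the same as a non-empty subset $J$ with $\sum_{j\in J}\binom{\tilde a_j}{\tilde b_j}\equiv 0\bmod p$; Lemma \ref{p^n} with $n=2$ gives $J_0$; if $J_0$ is monochromatic of colour $\nu_0$ then $|W|\ge q\left(\mathscr{K}\right)\ge p$; the projection argument gives $J_1$ with sum $t\boldsymbol{e}_{\nu_0}$; the case $t=0$ is settled by $J_0\cup J_1$; and when $|\mathscr{K}_{\nu_0}|\ge p-1$ Cauchy--Davenport does produce a non-empty $J_2\subseteq\mathscr{K}_{\nu_0}$ with scalar sum $-t\neq 0$. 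Moreover, your decomposition loses no generality: any two-coloured zero-sum set splits into such a $J_1$ and a (possibly empty) subset of $\mathscr{K}_{\nu_0}$.

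The genuine gap is the residual case $t\neq 0$, $|\mathscr{K}_{\nu_0}|\le p-2$, which is precisely the hard core of the lemma and which you leave as an unproven ``pigeonhole comparison''. What would be needed is a lower bound on the set of values of $t$ attainable by non-empty subsets of $W$ with zero projection-sum, large enough to meet $-S'\cup\{0\}$ (a set of size at least $m+1$); but no such bound follows from the facts you actually use, namely $|W|\ge p+1$ and the non-vanishing of the projections. A single application of Lemma \ref{p^n} with $n=1$ guarantees only one zero-sum subsequence of the projected sequence, and since $|W|$ may be as small as $p+1<2p$, removing a minimal zero-sum subsequence (of length up to $p$) can leave too few elements to find a second, disjoint one; so you cannot even certify two distinct attainable values of $t$, let alone $p-m$ of them, by counting alone. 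Ruling out the bad configurations (for instance, $W$ concentrated on very few colours with scalars arranged so that all attainable $t$ avoid $-S'$) requires invoking $q\left(\mathscr{K}\right)\ge p$ a second time, in a finer way that controls how $W$ distributes over the colours different from $\nu_0$, and possibly re-choosing $J_0$ or the distinguished colour. That missing argument is essentially the content of the result itself: note that the paper does not reprove this lemma but simply quotes it from Davenport and Lewis~\cite[Lemma 3]{twoaddeq}, whose proof supplies exactly the case analysis your sketch stops short of. As it stands, your proposal is an honest partial proof, incomplete at the decisive step.
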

    \begin{proof}
        This is a restatement of~\cite[Lemma 3]{twoaddeq}.
    \end{proof}
	\begin{Lemma}
    \label{p^n=2}
        Let $S \in \{C, P\}$. A set of $2p-1$ variables of type $S^l$ contracts to an $S^{l+1}$ variable.
    \end{Lemma}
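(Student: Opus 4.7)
The plan is to reduce to a single application of Olson's theorem in dimension~$2$ (Lemma~\ref{p^n}) applied to the level coefficient vectors of the given variables. First I would observe that, by the convention introduced in Section~\ref{Strategy} on what it means for a set of variables to ``contract to'' a variable with prescribed properties, one may immediately dispose of the case that at least one of the $2p-1$ given variables is already of type $S^{l+1}$: such a variable already serves as the desired $S^{l+1}$ variable. Therefore I may assume that all $2p-1$ variables are of exact level~$l$, i.e.\ of type $E^l$, while still each carrying the property $S\in\{C,P\}$.

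Next I would apply Lemma~\ref{p^n} with $n=2$ and $s=2p-1=np-n+1$ to the integer pairs $(\tilde a_j,\tilde b_j)$ formed by the level coefficient vectors of the $2p-1$ variables. This yields $\varepsilon_j\in\{0,1\}$, not all zero, with
\begin{align*}
\sum_{j=1}^{2p-1}\varepsilon_j\tilde a_j\equiv\sum_{j=1}^{2p-1}\varepsilon_j\tilde b_j\equiv 0\bmod p.
\end{align*}
Setting $\mathscr K=\{j:\varepsilon_j=1\}$ and $y_j=1$ for $j\in\mathscr K$ gives, using $y_j^k=1$ and $p\nmid y_j$, a contraction from level~$l$ to level at least $l+1$ in the sense defined at the start of Section~\ref{Coloured variables and Contractions}.

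Finally I would verify that the resulting contracted variable inherits the property~$S$. If $S=P$, then every variable indexed by $\mathscr K$ can be traced back to two variables at level~$0$ of different colours, so by the definition of ``traced back'' the contracted variable can be as well, and is hence primary. If $S=C$, the same mechanism shows that the contracted variable can be traced back to at least one level-$0$ variable of non-zero colour, and is hence colourful. There is no genuine obstacle here: the two things to check are that the hypotheses of Lemma~\ref{p^n} are met in dimension $n=2$ (they are, with equality $s=2p-1$), and that the $C$- and $P$-properties pass through any contraction, which is immediate from the definition.
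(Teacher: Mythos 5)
Your proposal is correct and follows essentially the same route as the paper: dispose of the case where one of the $2p-1$ variables is already of type $S^{l+1}$, then apply Lemma~\ref{p^n} with $n=2$ to the level coefficient vectors to obtain a contraction to level at least $l+1$, which inherits the property $S$ because it can be traced back to at least one of the $S^l$ variables. The extra details you spell out (setting $y_j=1$, the case split $S=C$ versus $S=P$) are consistent with the paper's conventions and introduce no gap.
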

    \begin{proof}
        Either one of the $S^l$ variables is already a variable of type $S^{l+1}$ or Lemma \ref{p^n} can be used with $n=2$ to show that the set of indices of the $2p-1$ variables of type $S^l$ contains a contraction to a variable at level at least $l+1$ which can be traced back to at least one of the $S^l$ variables. Therefore, it is an $S^{l+1}$ variable.
    \end{proof}
    \begin{Lemma}
        \label{FS1}
        Let $S \in \{C, P \}$ and let there be $3p-2$ variables of type $S^l$. Then one can contract them to a variable of type $S^{l+1}$, using at most $p$ of them.
	\end{Lemma}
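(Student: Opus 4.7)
The plan is to split on whether any of the $3p-2$ given variables is already at a level strictly greater than $l$. If some such variable is present, then by definition it is already of type $S^{l+1}$, and the lemma holds trivially using a single-element subset, since $1 \le p$. Otherwise, all $3p-2$ given variables are of type $E^l$ while still carrying the property of being colourful (for $S=C$) or primary (for $S=P$); in this remaining case I need to produce a contraction $J$ of size at most $p$ from among them.

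The key reduction is that $k = p^{\tau}(p-1)$, and hence $p-1 \mid k$. By Fermat's little theorem, $y^k \equiv 1 \bmod p$ for every integer $y$ with $p \nmid y$. Consequently, writing $\tilde a_j, \tilde b_j$ for the level coefficients of the given variables, a subset $J$ of their indices yields a contraction from level $l$ to level at least $l+1$ as soon as
\begin{align*}
\sum_{j \in J} \tilde a_j \equiv \sum_{j \in J} \tilde b_j \equiv 0 \mod p,
\end{align*}
simply by choosing $y_j = 1$ for every $j \in J$. Applying Lemma \ref{FS1C} to the $3p-2$ pairs $(\tilde a_j, \tilde b_j)$ yields exactly such a subset with $|J| \le p$.

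Finally, the variable produced by this contraction lies at level at least $l+1$ and traces back to all variables indexed by $J$. Since each of those is colourful (resp.\ primary), the contracted variable inherits that property and is therefore of type $S^{l+1}$, as required. The proof presents no real obstacle: the only substantive observation is that $p-1 \mid k$ collapses the $k$-th power congruence into a plain additive one, which makes Lemma \ref{FS1C} immediately applicable.
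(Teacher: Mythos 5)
Your proposal is correct and follows essentially the same route as the paper: distinguish the case where one of the $S^l$ variables is already of type $S^{l+1}$ from the case where all are of type $E^l$, and in the latter apply Lemma \ref{FS1C} to the level coefficients to obtain a zero-sum subset of size at most $p$, which contracts (with $y_j=1$) to a variable at level at least $l+1$ that traces back to $S^l$ variables and is hence of type $S^{l+1}$. Your explicit remarks about $y_j=1$ and $p-1\mid k$ merely spell out details the paper leaves implicit.
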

    \begin{proof}
        Either one of the $S^l$ variables is already a variable of type $S^{l+1}$ or one can contract the $S^l$ variables to a variable at level at least $l+1$ using at most $p$ of them due to Lemma \ref{FS1C}. This variable can be traced back to at least one of the $S^l$ variables, thus it is an $S^{l+1}$ variable.
	\end{proof}
    \begin{Lemma}
		\label{contractiontolevell+1}
		Let there be $3p-2$ variables of type $E_\nu^l$ for $p\ge 5$ and $2p-1$ variables of type $E_\nu^l$ for $p=5$. Then one can contract at most $p$ of these variables to a variable of type $E^{l+1}$.
	\end{Lemma}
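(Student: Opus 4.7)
The plan is to reduce the bivariate contraction condition to a single additive condition on the corresponding integers $c_j$ modulo $p^2$, and then invoke Lemma~\ref{FS4} (or Lemma~\ref{2p-1fuerp=5} in the case $p=5$) directly.

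First I would note that since $\tau\ge 1$, for any $y\in\Z$ with $p\nmid y$ one has $y^k=(y^{p-1})^{p^\tau}\equiv 1\pmod{p^{\tau+1}}$, and in particular $y_j^k\equiv 1\pmod{p^2}$. Consequently, for any choice of integers $y_j$ coprime to $p$ and any non-empty subset $J\subseteq\mathscr K$,
\begin{align*}
\sum_{j\in J}a_j y_j^k\equiv p^l\sum_{j\in J}\tilde a_j\pmod{p^{l+2}},\qquad \sum_{j\in J}b_j y_j^k\equiv p^l\sum_{j\in J}\tilde b_j\pmod{p^{l+2}}.
\end{align*}
Thus the contraction of $J$ is a variable at exactly level $l+1$ iff both $\sum_{j\in J}\tilde a_j$ and $\sum_{j\in J}\tilde b_j$ are divisible by $p$ while at least one of them is not divisible by $p^2$.

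Next, writing each variable in $\mathscr K$ via its colour nuance data as $(\tilde a_j,\tilde b_j)\equiv c_j(\boldsymbol e_\nu+\mu_j\boldsymbol e^\nu)\pmod{p^2}$ with $c_j\in\{1,\dots,p^2\}\setminus p\Z$, a short case split reduces the target condition to a single scalar one. For $\nu\ge 1$ one has $\tilde b_j\equiv c_j\pmod{p^2}$ and $\tilde a_j\equiv \nu\tilde b_j\pmod p$, while for $\nu=0$ one has $\tilde a_j\equiv c_j\pmod{p^2}$ and $\tilde b_j\equiv 0\pmod p$ automatically. In both cases the required pair of congruences is equivalent to
\begin{align*}
\sum_{j\in J}c_j\equiv 0\pmod p\qquad\text{and}\qquad \sum_{j\in J}c_j\not\equiv 0\pmod{p^2}.
\end{align*}

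Finally, since $c_j\in\Z\setminus p\Z$ for every $j\in\mathscr K$, Lemma~\ref{FS4} applied to the $c_j$ produces a non-empty $J\subseteq\mathscr K$ of size $|J|\le p$ with precisely these two properties whenever $|\mathscr K|\ge 3p-2$, which covers the case $p\ge 5$. For the sharper statement in the case $p=5$, the same conclusion follows from Lemma~\ref{2p-1fuerp=5}, which only requires $|\mathscr K|\ge 9=2p-1$. Setting $x_j=y_j X$ for $j\in J$ with $y_j=1$ then yields the desired $E^{l+1}$ variable, obtained from at most $p$ of the original $E_\nu^l$ variables. The only delicate point in this plan is verifying that the reduction modulo $p^2$ really does collapse the bivariate condition to a scalar one via the colour nuance expansion; this is what forces the hypothesis $\tau\ge 1$, and is otherwise routine bookkeeping rather than a genuine obstacle.
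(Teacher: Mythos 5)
Your proposal is correct, but it takes a genuinely different route from the paper: the paper does not prove this lemma internally at all, it simply cites \cite[Lemma 3.10]{GodSouNet} for $p\ge 5$ and \cite[Lemma 3.8]{GundNet} for $p=5$, whereas you give a self-contained derivation from Lemma~\ref{FS4} and Lemma~\ref{2p-1fuerp=5}. In effect you adapt the paper's own proof of Lemma~\ref{FS3} (which assumes a fixed colour nuance) to variables of fixed colour but varying nuances $\mu_j$: since $p \mid \boldsymbol{e}^{\nu}$ and $y^k \equiv 1 \bmod p^2$ for $p \nmid y$ (this is exactly where $\tau\ge 1$ enters), the nuances only perturb one coordinate of $\sum_{j \in J}\binom{\tilde a_j}{\tilde b_j}$ by a multiple of $p$, while the other coordinate is $\equiv \sum_{j \in J} c_j \bmod p^2$; hence the subset supplied by Lemma~\ref{FS4} (resp.\ Lemma~\ref{2p-1fuerp=5}) gives a contraction to exact level $l+1$, and the loss of colour control is precisely why the conclusion is only $E^{l+1}$ rather than $E_\nu^{l+1}$, which is all the lemma asserts. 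One small imprecision: your scalar conditions $\sum_{j\in J} c_j \equiv 0 \bmod p$ and $\sum_{j\in J} c_j \nequiv 0 \bmod p^2$ are sufficient for, but not equivalent to, the exact-level-$(l+1)$ condition (for $\nu=0$ one could instead have $\sum c_j \equiv 0 \bmod p^2$ but $\sum c_j \mu_j \nequiv 0 \bmod p$); since you only use the sufficiency direction, this does not affect the argument. As for what each approach buys: the paper's citation keeps the text short and defers to the lemmas proved in the cited works, while your argument makes the statement self-contained within the paper's own toolkit and makes transparent how the hypothesis $k=p^{\tau}(p-1)$ with $\tau\ge 1$ is used.
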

	\begin{proof}
        For $p\ge5$ see~\cite[Lemma 3.10]{GodSouNet} and for $p=5$ see~\cite[Lemma 3.8]{GundNet}.
	\end{proof}
    \begin{Lemma}
        \label{FS3}
        Let there be $3p-2$ variables of type $E_{\nu \mu}^l$ for $p \ge 5$ or $2p-1$ variables for $p=5$. Then one can contract at most $p$ variables to a variable of type $E_\nu^{l+1}$.
    \end{Lemma}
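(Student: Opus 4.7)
The plan is to apply the combinatorial extraction Lemmas \ref{FS4} (when $p \ge 7$) or \ref{2p-1fuerp=5} (when $p=5$) directly to the ``corresponding integers'' $c_j \in \{1, \dots, p^2\} \setminus p\Z$ attached to the given $E_{\nu \mu}^l$ variables. Each $c_j$ lies in $\Z\backslash p\Z$, so in the appropriate regime ($3p-2$ values for $p\ge 7$, $2p-1$ values for $p=5$) the chosen lemma yields a non-empty subset $J$ of indices with $|J|\le p$ such that $\sum_{j\in J} c_j \equiv 0 \pmod p$ but $\sum_{j\in J} c_j \not\equiv 0 \pmod{p^2}$.

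I would then set $y_j = 1$ for all $j \in J$ and contract by putting $x_j = X_0$ for every such $j$, producing a new variable $X_0$ whose coefficients are $A_0 = \sum_{j \in J} a_j$ and $B_0 = \sum_{j \in J} b_j$. To identify the level and colour of $X_0$, I use the defining property of colour nuance $(\nu,\mu)$, namely $\binom{\tilde a_j}{\tilde b_j} \equiv c_j \bigl(\boldsymbol{e}_\nu + \mu \boldsymbol{e}^\nu\bigr) \pmod{p^2}$. Summing over $J$ gives
\begin{align*}
\binom{A_0/p^l}{B_0/p^l} \equiv \Bigl(\sum_{j\in J} c_j\Bigr)\bigl(\boldsymbol{e}_\nu + \mu \boldsymbol{e}^\nu\bigr) \pmod{p^2}.
\end{align*}
Writing $\sum_{j\in J} c_j = p t$ with $t \not\equiv 0 \pmod p$ (this is exactly the strengthening guaranteed by Lemma \ref{FS4} or \ref{2p-1fuerp=5}), the right-hand side equals $p t \bigl(\boldsymbol{e}_\nu + \mu \boldsymbol{e}^\nu\bigr) \bmod{p^2}$, so both coordinates are divisible by $p$ but not both by $p^2$. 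Hence $X_0$ sits at exact level $l+1$.

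It remains to verify that $X_0$ has colour $\nu$. This is a short explicit computation of the level coefficient vector at level $l+1$, splitting into the cases $\nu \in \{1,\dots,p\}$ (where $\boldsymbol{e}^\nu = \binom{p}{0}$) and $\nu = 0$ (where $\boldsymbol{e}^0 = \binom{0}{p}$). In either case the $\mu \boldsymbol{e}^\nu$ contribution, multiplied by $p t$, vanishes modulo $p^2$, so after dividing by $p^{l+1}$ the level coefficient vector of $X_0$ reduces modulo $p$ to $t \boldsymbol{e}_\nu$, which is a non-zero multiple of $\boldsymbol{e}_\nu$ since $t\not\equiv 0 \pmod p$. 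Thus $X_0$ is of type $E_\nu^{l+1}$, using at most $p$ of the original variables. The only mild obstacle is carrying out the $\nu=0$ versus $\nu \neq 0$ case split cleanly, since $\boldsymbol{e}^\nu$ is supported on different coordinates; once this book-keeping is done the verification of colour $\nu$ is immediate.
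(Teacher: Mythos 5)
Your proposal is correct and follows essentially the same route as the paper: apply Lemma \ref{FS4} (respectively Lemma \ref{2p-1fuerp=5} for $p=5$) to the corresponding integers $c_j$, set all $y_j=1$, and use $p \mid \boldsymbol{e}^\nu$ to see that the contracted level coefficient vector is $\equiv pt\,\boldsymbol{e}_\nu \bmod p^2$ with $t \nequiv 0 \bmod p$, so the new variable is of type $E_\nu^{l+1}$. The final $\nu=0$ versus $\nu\neq0$ case split you anticipate is not even needed, since the conclusion that the reduced vector $t\boldsymbol{e}_\nu$ lies in $\mathscr{L}_\nu$ is uniform in $\nu$.
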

    \begin{proof}
        Let $\mathscr{K}$ be the set of indices of these variables. Let $c_i$ be the corresponding integer of the variable $x_i$. Due to Lemma \ref{FS4} for  $p\ge 5$ and Lemma \ref{2p-1fuerp=5} for $p=5$, there is a non-empty subset $J \subset \mathscr{K}$ with $|J|\le p$, such that $\sum_{j \in J} c_j \equiv 0 \bmod p$ while $\sum_{j \in J} c_j \nequiv 0 \bmod p^2$ and it follows that
        \begin{align*}
	          \sum_{j \in J}  \begin{pmatrix}\tilde a_j\\ \tilde b_j\end{pmatrix} \equiv \sum_{j \in J} c_j  \left(\boldsymbol e_\nu + \mu \boldsymbol e^\nu \right) \equiv \left(\boldsymbol e_\nu + \mu \boldsymbol e^\nu\right) \sum_{j \in J} c_j\nequiv 0  \mod p^2,
        \end{align*}
        while $\sum_{j \in J}  c_j \equiv 0 \bmod p$. As $p\mid \boldsymbol e^\nu$, this leaves
        \begin{align*}
            \sum_{j \in J}  \begin{pmatrix}\tilde a_j\\  \tilde b_j\end{pmatrix}  \equiv \boldsymbol e_\nu \sum_{j \in J} c_j \equiv p c \boldsymbol e_\nu \mod p^2
        \end{align*}
        for some $c$ not congruent to $0$ modulo $p$. Hence, by setting $x_i=1$ for all $i \in J$, one can see that $J$ is a contraction of at most $p$ variables to a variable of type $E_\nu^{l+1}$.
    \end{proof}
    \begin{Lemma}
		\label{createavariableinm_1-I_0^1}
		Let there be $p-1$ variables of type $E_{\nu \mu_1}^l$ and one of type $E_{\nu \mu_2}^l$ with $\mu_1 \neq \mu_2$. Then one can contract them to an $E_{\bar \nu}^{l+1}$ variable.
	\end{Lemma}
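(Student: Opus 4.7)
The plan is to exploit the fact that, since $\tau\ge 1$ and $p$ is odd, $\phi(p^2)=p(p-1)$ divides $k=p^\tau(p-1)$; hence $y^k\equiv 1\pmod{p^2}$ for every integer $y$ coprime to $p$. Thus for any subset $\mathscr{K}$ of the given $p$ indices and any admissible $y_j$, $\sum_{j\in\mathscr{K}}(\tilde a_j,\tilde b_j)\,y_j^k\equiv\sum_{j\in\mathscr{K}}(\tilde a_j,\tilde b_j)\pmod{p^2}$. Finding a suitable contraction therefore becomes a purely combinatorial task: pick a subset $\mathscr{K}$ of the $p$ variables whose coefficient sum is divisible by $p$ and, after dividing by $p$, is not a scalar multiple of $\boldsymbol{e}_\nu$ modulo $p$.

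My proposal is to force the single nuance-$\mu_2$ variable into $\mathscr{K}$ with corresponding integer $c$ (a unit mod $p$), and then select a subset $J$ of the $p-1$ nuance-$\mu_1$ indices---with corresponding integers $c_1,\dots,c_{p-1}$---satisfying $\sum_{j\in J}c_j\equiv -c\pmod p$. Such a $J$ exists by a classical subset-sum consequence of Cauchy--Davenport: the subset sums of any $n$ nonzero residues mod $p$ cover at least $\min(p,n+1)$ values, and for $n=p-1$ this is all of $\Z/p\Z$. Written in the basis $\{\boldsymbol{e}_\nu,\boldsymbol{e}^\nu\}$, the resulting contribution modulo $p^2$ is
\[(S+c)\,\boldsymbol{e}_\nu+(\mu_1 S+\mu_2 c)\,\boldsymbol{e}^\nu,\]
where $S:=\sum_{j\in J}c_j$. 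The first coefficient is divisible by $p$ by construction and $\boldsymbol{e}^\nu$ is divisible by $p$ by definition, so the whole vector lies in $p\cdot(\Z/p^2\Z)^2$ and $\mathscr{K}$ is indeed a contraction to level at least $l+1$.

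The verification that the contracted variable sits at exact level $l+1$ and is not of colour $\nu$ then turns on a single scalar: using $S\equiv -c\pmod p$ one gets $\mu_1 S+\mu_2 c\equiv(\mu_2-\mu_1)c\pmod p$, which is a unit mod $p$ because $\mu_1\neq\mu_2$ lie in $\{0,\dots,p-1\}$ and $p\nmid c$. After dividing the contracted sum by $p$, the new level coefficient vector therefore has a nonzero component in the direction $p^{-1}\boldsymbol{e}^\nu$ modulo $p$, which simultaneously keeps it nonzero modulo $p$ (so the level is exactly $l+1$) and prevents it from being a scalar multiple of $\boldsymbol{e}_\nu$ (so the colour is not $\nu$). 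Hence $\mathscr{K}$ contracts to an $E_{\bar\nu}^{l+1}$ variable. The only delicate point in the plan is that the subset-sum bound is tight at $n=p-1$, so one has exactly enough room to hit the prescribed residue $-c$; once that is in hand, everything else is the short computation above.
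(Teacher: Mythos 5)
Your proof is correct, but it takes a genuinely different route from the paper. The paper proves the lemma by invoking its Lemma \ref{nicht0} to solve $\sum_{i\in\mathscr{K}}c_iy_i^k\equiv 0\bmod p$ with the nuance-$\mu_2$ variable forced to be a unit, then uses only $y_{i_0}^k\equiv 1\bmod p$ (i.e.\ $p-1\mid k$) and finishes with an explicit case analysis ($\nu=0$ versus $\nu\neq 0$, $t\equiv 0$ versus $t\not\equiv 0$) to identify the exact level and colour of the contracted variable. You instead exploit the stronger fact that $p\left(p-1\right)\mid k$ (valid under the standing assumption $\tau\ge 1$), so that $y^k\equiv 1\bmod p^2$ for all units $y$; this freezes the $k$-th powers entirely and reduces the lemma to a zero-one subset-sum problem, which you solve by the Cauchy--Davenport covering bound (subset sums of the $p-1$ unit residues $c_j$ cover all of $\Z/p\Z$, and since $-c\not\equiv 0$ the subset hitting $-c$ is non-empty), forcing the $\mu_2$-variable into the contraction; your closing computation, that the reduced vector $w\,\boldsymbol{e}_\nu+u\,p^{-1}\boldsymbol{e}^\nu$ with $u\equiv\left(\mu_2-\mu_1\right)c$ a unit is non-zero and not proportional to $\boldsymbol{e}_\nu$, replaces the paper's case distinction by a linear-independence argument and is correct. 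What each approach buys: yours is cleaner and avoids solving any congruence in $k$-th powers, at the cost of using the special shape of $k$ (it would not cover $\tau=0$, which the paper's argument via Lemma \ref{nicht0} would) and of importing Cauchy--Davenport, which is not among the paper's quoted combinatorial lemmata; the paper's version is marginally more general and stays inside its stated toolkit, at the cost of a longer explicit computation.
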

	\begin{proof}
        Define $x^{-1}$ for an integer $x \in \Z\backslash p\Z$ as the element in $\{1, \dots, p-1\}$ which solves $x \cdot x^{-1} \equiv 1 \bmod p$.
	
        Let $\mathscr{K}$ be the set of indices of those $p$ variables and $c_i$ be the corresponding integer for $i \in \mathscr{K}$.
		Let $x_{i_0}$ be the $E_{\nu\mu_2}^l$ variable. Due to Lemma \ref{nicht0} there is a solution of 
		\begin{align*}
			\sum_{i \in \mathscr{K}} c_i y_i^k \equiv t p \mod p^2
		\end{align*}
		for some $t \in \{ 1, \dots, p\}$ with $y_{i_0} \nequiv 0 \bmod p$. Consequently, one has $y_{i_0}^k \equiv 1 \bmod p$ because $p-1 \mid k$ and it follows that 
		\begin{align*}
			\sum_{i \in \mathscr{K}}  \begin{pmatrix}\tilde a_i\\ \tilde b_i\end{pmatrix} y_i^k \equiv \sum_{i \in \mathscr{K}\backslash \{i_0\}}  c_i \left(\boldsymbol{e}_{\nu} + \mu_1 \boldsymbol{e}^{\nu} \right) y_i^k + c_{i_0}  \left( \boldsymbol{e}_{\nu} + \mu_2 \boldsymbol{e}^{\nu} \right) y_{i_0}^k \equiv t p \boldsymbol{e}_\nu + c_{i_0} \boldsymbol{e}^\nu \left(\mu_2 - \mu_1\right)\mod p^2,
		\end{align*} 
		which is divisible by $p$ because $\boldsymbol{e}^\nu$ is. 
		For $\nu=0$ one has
		\begin{align*}
			tp \boldsymbol{e}_\nu + c_{i_0} \boldsymbol{e}^\nu \left(\mu_2 - \mu_1\right) &\equiv	p \left( t \begin{pmatrix}1\\0\end{pmatrix} + c_{i_0} \begin{pmatrix}0\\1\end{pmatrix} \left( \mu_2 - \mu_1 \right)\right)\\
			&\equiv p \left( c_{i_0} \left( \mu_2-\mu_1\right) \begin{pmatrix}t c_{i_0}^{-1} \left( \mu_2-\mu_1\right)^{-1}\\1\end{pmatrix}\right) \mod p^2
		\end{align*}
		because $p$ divides neither $c_{i_0}$ nor $\mu_2-\mu_1$. It follows that the resulting variable lies at level $l+1$ and is of colour $\nu' \neq 0$ with $\nu' \equiv t c_{i_0}^{-1} \left( \mu_2-\mu_1\right)^{-1} \bmod p$.
		For $\nu\neq 0$ one gets
		\begin{align*}
			tp \boldsymbol{e}_\nu + c_{i_0} \boldsymbol{e}^\nu \left(\mu_2 - \mu_1\right) \equiv	p \left( t \begin{pmatrix}\nu\\1\end{pmatrix} + c_{i_0} \begin{pmatrix}1\\0\end{pmatrix} \left( \mu_2 - \mu_1 \right)\right)\mod p^2
		\end{align*}
		which is for $t\equiv 0 \bmod p$  congruent to
		\begin{align*}
			p \left( c_{i_0} \left( \mu_2 - \mu_1 \right)\begin{pmatrix}1\\0\end{pmatrix} \right)
		\end{align*}
		and else congruent to
		\begin{align*}
			p \left( t\begin{pmatrix}\nu +t^{-1} c_{i_0} \left( \mu_2-\mu_1\right)\\1\end{pmatrix}\right).
		\end{align*}
		Hence, again because $p$ divides neither $c_{i_0}$ nor $\mu_2-\mu_1$ one obtains a variable at level $l+1$, which is for $t\equiv 0 \bmod p $ of colour $0$ and for $t \nequiv 0 \bmod p$ of colour $\nu'$ for $\nu' \equiv \nu + t^{-1} c_{i_0} \left( \mu_2-\mu_1\right) \bmod p$ with $\nu' \neq \nu$.
	\end{proof}
	\begin{Lemma}
		\label{onecolourfulandp-1differentvariablesofthesamecolour}
		Let $S \in \{C, P\}$ and $0 \le m \le p-1$. Let there be $p-m-1$ variables of type $E_\nu^l$ and $m+1$ of type $S_\nu^l$. Then they contract to a variable of type $S^{l+1}$. 
	\end{Lemma}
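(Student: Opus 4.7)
The plan is to exploit that all $p$ given variables share the single colour $\nu$, so the condition for a subset to contract to level at least $l+1$ collapses modulo $p$ into one scalar congruence in the corresponding integers $c_i$, and then to apply Lemma \ref{nicht0} to locate a contraction that is forced to involve one of the $m+1$ distinguished $S$ variables.

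First I would dispose of the trivial case by the contraction convention: if one of the $m+1$ variables of type $S_\nu^l$ is already at level at least $l+1$, then it is by definition already an $S^{l+1}$ variable and nothing more is needed. So I may assume that each of the $m+1$ $S$ variables and each of the $p-m-1$ $E$ variables has exact level $l$ and colour $\nu$. Writing their level coefficient vectors modulo $p$ in the form $c_i \boldsymbol{e}_\nu$ with $c_i \not\equiv 0 \bmod p$, I relabel so that $x_1$ is one of the $m+1$ $S_\nu^l$ variables.

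Since $k = p^\tau(p-1)$ we have $\textup{ggT}(k,p-1) = p-1 < p$, so Lemma \ref{nicht0} applied to the $p$ integers $c_1, \dots, c_p$ furnishes a solution of
\[
\sum_{i=1}^{p} c_i y_i^k \equiv 0 \bmod p
\]
with $y_1 \not\equiv 0 \bmod p$. Put $\mathscr{K} = \{ i : y_i \not\equiv 0 \bmod p\}$; the indices discarded contribute $0$ to the sum, so the congruence persists on $\mathscr{K}$, and
\[
\sum_{i \in \mathscr{K}} \begin{pmatrix} \tilde a_i \\ \tilde b_i \end{pmatrix} y_i^k \equiv \boldsymbol{e}_\nu \sum_{i \in \mathscr{K}} c_i y_i^k \equiv 0 \bmod p,
\]
so $\mathscr{K}$ is a contraction to a variable at level at least $l+1$. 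Because $1 \in \mathscr{K}$, this contracted variable can be traced back to the $S_\nu^l$ variable $x_1$ and is therefore itself of type $S$, which yields an $S^{l+1}$ variable.

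There is no real obstacle here beyond the reduction step and the observation that Lemma \ref{nicht0} not only produces a nontrivial solution but lets us prescribe which index is nonzero; once $x_1$ is chosen to be an $S$ variable, colour uniformity and $p-1 \mid k$ do the rest.
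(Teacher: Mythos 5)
Your proposal is correct, and it takes a genuinely different, more self-contained route than the paper. The paper's own proof consists only of the same trivial-case reduction (if one of the $S_\nu^l$ variables is already of type $S_\nu^{l+1}$ there is nothing to show), the observation that $l>0$ reduces to $l=0$ by working with the level coefficient vectors $\binom{\tilde a_i}{\tilde b_i}$, and then a citation of an external result, namely Lemma 3.7 of~\cite{GundNet}, for the case $l=0$. You instead supply the missing argument directly from Lemma~\ref{nicht0}: since all $(p-m-1)+(m+1)=p$ variables share the colour $\nu$, their level coefficient vectors are $c_i\boldsymbol e_\nu$ modulo $p$ with $p\nmid c_i$, so the two-component condition for a contraction to level at least $l+1$ collapses to the single scalar congruence $\sum_i c_i y_i^k\equiv 0\bmod p$; because $\gcd(k,p-1)=p-1<p$, Lemma~\ref{nicht0} applies with $n=p$ and, crucially, lets you prescribe that the distinguished $S_\nu^l$ variable $x_1$ satisfies $y_1\nequiv 0\bmod p$. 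Restricting to the support of that solution is legitimate (the discarded indices have $p\mid y_i$, hence contribute nothing modulo $p$), the support is a contraction in the sense of the paper's definition, and since it contains $x_1$, tracing back preserves the colourful or primary property, giving an $S^{l+1}$ variable. The comparison: the paper's version is shorter but outsources the substance to~\cite{GundNet}, whereas your argument proves the statement for all $l$ at once using only tools already quoted in the paper, which is arguably preferable for a self-contained exposition; both rest ultimately on the same Chevalley/Cauchy--Davenport-type input that with more than $\gcd(k,p-1)$ unit coefficients one can solve the congruence with a designated variable a unit.
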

	\begin{proof}
        Either one of the $S_\nu^l$ variables is already a $S_\nu^{l+1}$ variable, or one can assume, that they are all of type $E_\nu^l$ as well. The cases $l>0$ can be reduced to the case $l=0$ by working with the level coefficient vector $\binom{\tilde a_i}{\tilde b_i}$ instead of the coefficient vector $\binom{a_i}{b_i}$.
        See~\cite[Lemma 3.7]{GundNet} for the case $l=0$.
	\end{proof}
	\begin{Lemma}
        \label{FS5}
        Let $\mathscr H$ be a set of indices of variables of type $E_\nu^l$ with $|\mathscr H | \ge 4p-3$ and either for all $i \in \mathscr H$ the corresponding integer $c_i$ is congruent to an element in the set $\left\{1, 2, \dots, \frac{p-1}{2} \right \}$ modulo $p$ or all $c_i$ are congruent to elements in the set $\left \{ \frac{p+1}{2}, \dots, p-1\right \}$. Then $\mathscr{H}$ contains a contraction $\mathscr{K}$ to a variable of type $E_\nu^{l+1}$, with $|\mathscr{K}| \le 2p-2$.
    \end{Lemma}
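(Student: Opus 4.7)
First I would rewrite the contraction requirement combinatorially. Since $\tau \ge 1$ we have $p(p-1) \mid k$, so by Euler's theorem $y^k \equiv 1 \pmod{p^2}$ for every $y$ coprime to $p$. Hence in searching for $\mathscr{K} \subseteq \mathscr{H}$ we may simply take $y_j = 1$ for every $j \in \mathscr{K}$ and ask whether $\sum_{j \in \mathscr{K}} \binom{\tilde a_j}{\tilde b_j} \equiv p c \boldsymbol{e}_\nu \pmod{p^2}$ for some $c \not\equiv 0 \pmod p$. Writing $\binom{\tilde a_j}{\tilde b_j} \equiv c_j(\boldsymbol{e}_\nu + \mu_j \boldsymbol{e}^\nu) \pmod{p^2}$, where $\mu_j$ is the second component of the colour nuance $(\nu,\mu_j)$ of $x_j$, and using $p \mid \boldsymbol{e}^\nu$, one checks that $\mathscr{K}$ contracts to an $E_\nu^{l+1}$ variable exactly when
\begin{align*}
\text{(a)}\ \sum_{j \in \mathscr{K}} c_j &\equiv 0 \pmod p, \\
\text{(b)}\ \sum_{j \in \mathscr{K}} c_j \mu_j &\equiv 0 \pmod p, \\
\text{(c)}\ \sum_{j \in \mathscr{K}} c_j &\not\equiv 0 \pmod{p^2}.
\end{align*}

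Next I would split by the distribution of the $\mu_j$'s. Set $\mathscr{H}_m := \{j \in \mathscr{H} : \mu_j = m\}$. If some $|\mathscr{H}_m| \ge 3p-2$, Lemma \ref{FS3} applied inside $\mathscr{H}_m$ produces a contraction of size at most $p \le 2p-2$ to an $E_\nu^{l+1}$ variable and the proof is complete. Otherwise I would view each $j \in \mathscr{H}$ as the vector $v_j := (c_j, c_j\mu_j) \in (\Z/p\Z)^2$ and invoke Reiher's theorem on the two-dimensional Erd\H{o}s--Ginzburg--Ziv constant, namely $\mathsf{s}((\Z/p\Z)^2) = 4p - 3$: since $|\mathscr{H}| \ge 4p-3$, there is a subset $\mathscr{K}_0 \subseteq \mathscr{H}$ of cardinality exactly $p$ with $\sum_{\mathscr{K}_0} v_j \equiv 0 \pmod p$, which secures conditions (a) and (b) at cost $|\mathscr{K}_0| = p \le 2p - 2$.

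If $\mathscr{K}_0$ already satisfies (c), one is done; otherwise $\sum_{\mathscr{K}_0} c_j \equiv 0 \pmod{p^2}$, and I would modify $\mathscr{K}_0$ by a swap. Concretely, look for $i_0 \in \mathscr{K}_0$ and $j_0 \in \mathscr{H} \setminus \mathscr{K}_0$ with $c_{j_0} \equiv c_{i_0} \pmod p$, $\mu_{j_0} = \mu_{i_0}$ but $c_{j_0} \not\equiv c_{i_0} \pmod{p^2}$. Replacing $i_0$ by $j_0$ preserves (a) and (b) (the individual contributions to each are congruent modulo $p$) while shifting $\sum c_j$ modulo $p^2$ by $c_{j_0} - c_{i_0} \not\equiv 0$, which enforces (c) without enlarging $\mathscr{K}_0$.

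The main obstacle is the \emph{rigid} configuration in which no such swap is available, that is, when every pair $(c_i \bmod p, \mu_i)$ occurring in $\mathscr{K}_0$ is realised by a single value of $c_i \bmod p^2$ throughout $\mathscr{H}$. Here the half-range hypothesis is crucial: it forbids two-element zero-sums (no two residues in $\{1,\ldots,(p-1)/2\}$ sum to $0$ modulo $p$), it pins $\sum_{\mathscr{K}_0} c_j$ as an integer to the range $\{p,2p,\ldots,p(p-1)/2\}$, and combined with the rigidity it collapses the $p^2$-information of $\mathscr{H}$ to its pair-class data. Using $|\mathscr{H} \setminus \mathscr{K}_0| \ge 3p - 3$, I would then either exploit Lemma \ref{FS4} inside a well-populated nuance class $\mathscr{H}_m$ to extract a second small subset whose integer sum is $\not\equiv 0 \pmod{p^2}$ and which can be adjoined to (or swapped with) $\mathscr{K}_0$ while keeping the total size at most $2p-2$, or else invoke pair-class bookkeeping to locate an alternative two-dimensional zero-sum subset whose integer sum escapes divisibility by $p^2$. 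Making this final rigidity step watertight, especially for the smallest primes where Reiher's bound is tight, is the delicate part of the argument.
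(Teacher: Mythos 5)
Your reduction to the three conditions (a), (b), (c) with $y_j=1$ is correct, and the fallback via Lemma \ref{FS3} when some nuance class has $\ge 3p-2$ elements is fine. The proof breaks down, however, at exactly the point you flag: after the Erd\H{o}s--Ginzburg--Ziv/Reiher step you only control the data modulo $p$, so condition (c) is out of reach, and the ``rigid'' case is a genuine one (it occurs, for instance, whenever each pair class determined by $(c_i \bmod p,\mu_i)$ carries a single value of $c_i \bmod p^2$), not a degenerate fringe you can wave away. Your two proposed remedies do not work as stated: Lemma \ref{FS4} needs $3p-2$ elements, and inside a single nuance class this many are unavailable precisely in the case you are in (otherwise you would already have applied Lemma \ref{FS3}); and the assertion that the half-range hypothesis pins $\sum_{\mathscr{K}_0} c_j$ into $\{p,2p,\dots,\frac{p(p-1)}{2}\}$ confuses $c_j$, which is a residue modulo $p^2$ as large as $p^2-1$, with its mod-$p$ representative. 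Writing $c_j=d_j+pf_j$ with $d_j\in\{1,\dots,p-1\}$, the half-range bound controls only $\sum d_j$, while the $p$-parts $f_j$ are left completely uncontrolled by your zero-sum selection; since (c) concerns $\sum c_j=\sum d_j+p\sum f_j$ modulo $p^2$, nothing forces it to be nonzero.

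The missing idea, which is how the paper proceeds, is to put the $p$-parts into the zero-sum problem itself: apply Olson's theorem (Lemma \ref{p^n}) in $(\Z/p\Z)^4$ to the vectors $(d_i,\,d_i\mu_i,\,f_i,\,1)$ --- this is exactly why the hypothesis is $|\mathscr{H}|\ge 4p-3=4(p-1)+1$. The resulting subset $J$ then satisfies $\sum_{i\in J}\binom{\tilde a_i}{\tilde b_i}\equiv \boldsymbol{e}_\nu\sum_{i\in J} d_i \bmod p^2$ (the $f_i$-coordinate kills the $p\sum f_i$ contribution), the fourth coordinate forces $|J|\in\{p,2p,3p\}$, and now the half-range hypothesis does its job on the right quantity: $0<\sum_{i\in J} d_i\le 2p\cdot\frac{p-1}{2}<p^2$, so the sum is nonzero modulo $p^2$ once a set of size $\le 2p$ is secured; a $|J|=3p$ set is split by another application of Olson with $n=3$, and a final application with $n=2$ splits a set of size $2p-1$ or $2p$ into two zero-sum parts, one of which must retain the nonvanishing modulo $p^2$ and has at most $2p-2$ elements because no part can be a singleton (all $d_i\nequiv 0\bmod p$). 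The upper-half case is reduced to the lower-half case by negating the coefficient vectors. Without this device (or a completed substitute for your rigidity step, including the small primes where your Reiher bound is tight), the proposal does not yet prove the lemma.
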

    \begin{proof}
        For all $i \in \mathscr{H}$, let $\left( \nu, \mu_i\right)$ be the colour nuance of the variable $x_i$ and let $d_i \in \left \{1, 2, \dots, p-1\right \}$ and $f_i \in \left \{ 0, 1, \dots, p-1 \right \}$ be such that as $c_i=d_i + p f_i$.
    	
        For the proof one can assume that $|\mathscr H | = 4p-3$. If this is not the case, one can take a subset of $\mathscr H$ to obtain the desired result. The first part proves the weaker claim that $\mathscr{H}$ contains a subset $\mathscr{K}$ containing at most $2p$ variables such that
        \begin{align*}\sum_{i \in J} \begin{pmatrix} d_i \\ d_i \mu_i \end{pmatrix}\equiv 0 \mod p \quad \textup{and} \quad \sum_{i \in J} \begin{pmatrix} \tilde{a}_i\\ \tilde{b}_i \end{pmatrix} \equiv dp \boldsymbol e_{\nu} \mod p^2,
        \end{align*}
        for some $d \nequiv 0 \bmod p$.
        By Lemma \ref{p^n}, the set $\mathscr H$ contains a non-empty subset $J$ such that
        \begin{align}
        \label{1trick}
            \sum_{i \in J} \begin{pmatrix}
                                            d_i\\ d_i \mu_i \\f_i  \\ 1 
                                         \end{pmatrix}\equiv 0 \mod p.
        \end{align}
        This leads to
        \begin{align*}
            \sum_{i \in J} \begin{pmatrix}
                            \tilde a_i \\ \tilde b_i 
                           \end{pmatrix}
                           &\equiv \sum_{i \in J} c_i\left(\boldsymbol e_\nu + \mu_i \boldsymbol e^{\nu} \right) \equiv \sum_{i \in J} \left(d_i + f_i p\right) \left(\boldsymbol e_\nu + \mu_i \boldsymbol e^{\nu}\right) \\ &\equiv  \sum_{i \in J} d_i \boldsymbol e_\nu +  \sum_{i \in J} d_i \mu_i \boldsymbol e^{\nu} + \sum_{i \in J} f_i p \boldsymbol e_\nu + \sum_{i \in J} f_i p \mu_i \boldsymbol e^{\nu}\\
                           & \equiv   \boldsymbol e_\nu \sum_{i \in J} d_i  \mod p^2,
        \end{align*}
        where the last equivalence holds due to $p \mid \boldsymbol{e}^\nu$ and the second and third entry in $\eqref{1trick}$. The first entry shows that this is congruent to $0$ modulo $p$. As $J$ is a non-empty subset of $\mathscr H$, it follows from the fourth entry, that $|J|\in \left \{ p, 2p, 3p \right \}$.
        If $|J| = 3p$, take a subset $\tilde J \subset J$ containing $3p-2$ elements. By Lemma \ref{p^n} with $n=3$, there is a subset $\hat J \subseteq \tilde J$ with 
        \begin{align*}
            \sum_{i \in \hat J} \begin{pmatrix}
                                            d_i\\  d_i\mu_i\\ f_i 
                                         \end{pmatrix}\equiv 0 \mod p,
        \end{align*}
        and hence, 
        \begin{align*}
            \sum_{i \in \hat J} \begin{pmatrix}
                            \tilde a_i \\ \tilde b_i 
                           \end{pmatrix} \equiv  \boldsymbol e_\nu  \sum_{i \in \hat J} d_i \mod p^2,
        \end{align*}
        as before, which again is congruent to $0$ modulo $p$. As $J= \hat{J} \cup \left( J\backslash \hat J\right)$, it follows that
        \begin{align*}
         \sum_{i \in J \backslash \hat J} \begin{pmatrix} d_i \\d_i \mu_i \\ f_i \end{pmatrix} \equiv 0 \mod p,                 
        \end{align*}                  
        and therefore,
        \begin{align*}
             \sum_{i \in J \backslash \hat J} 
           \begin{pmatrix}
                            \tilde a_i \\ \tilde b_i 
                           \end{pmatrix} \equiv \boldsymbol e_\nu \sum_{i \in J\backslash \hat J} d_i \mod p^2,
        \end{align*}
        which is congruent to $0$ modulo $p$ as well.
        Furthermore, both sets $\hat J$ and $J \backslash \hat J$ are non-empty, and the smallest of them has at most $\frac{3p}{2}\le 2p$ elements. It follows, that in every case there is a non-empty set $\mathscr{K}\subset \mathscr H$ containing at most $2p$ elements, such that 
        \begin{align*}           
            \sum_{i \in  J} \begin{pmatrix}
                            \tilde a_i \\ \tilde b_i 
                           \end{pmatrix} \equiv \boldsymbol e_\nu \sum_{i \in J} d_i  \mod p^2,  \qquad \textup{ and }\qquad
            \sum_{i \in J } \begin{pmatrix} d_i \\ d_i \mu_i \end{pmatrix}  \equiv 0 \mod p.
        \end{align*}
        Assume now for such a set $\mathscr{K}$ that all corresponding integers $c_i$ are congruent to elements in the set $\left \{ 1, 2, \dots, \frac{p-1}{2}\right \}$ modulo $p$. It follows, that $d_i$ lies in the same set for all $i \in \mathscr{K}$. Hence, it can be deduced from
        \begin{align*}
            1 \le \sum_{i \in \mathscr{K}} d_i \le \sum_{i \in \mathscr{K}} \frac{p-1}{2} \le p\left(p-1\right),
        \end{align*}
        that $\sum_{i \in \mathscr{K}} d_i \nequiv 0 \bmod p^2$ and therefore,
        \begin{align*}
            \sum_{i \in \mathscr{K}} \begin{pmatrix}\tilde a_i\\ \tilde b_i\end{pmatrix} \equiv d p \boldsymbol e_\nu \mod p^2 
        \end{align*}
        for some $d \nequiv 0 \bmod p$. This proves the weaker claim if all $c_i$ are modulo $p$ congruent to an element in the set $\{1, \dots, \frac{p-1}{2}\}$. 
        Now let all $c_i$ be congruent to elements in the set $\left \{ \frac{p+1}{2}, \dots, p-1 \right \}$. It follows that
        \begin{align*}
            \begin{pmatrix}- \tilde a_i\\- \tilde b_i\end{pmatrix} \equiv \left(p^2-c_i\right)\left(\boldsymbol e_\nu + \mu_i     \boldsymbol e^{\nu}\right) \equiv \left(p -d_i + p \left(p-f_i-1\right)\right)\left(\boldsymbol e_\nu + \mu_i     \boldsymbol e^{\nu}\right)\mod p^2,
        \end{align*}
        and, the corresponding integers $p -d_i + p \left(p-f_i-1\right)$ lie modulo $p$  in $\left \{1, 2, \dots, \frac{p-1}{2}\right \}$, again. Using the obtained results, there is a subset $\mathscr{K}\subset \mathscr H$ with $|\mathscr{K}| \le 2p$ and
        \begin{align*}
			\sum_{j \in \mathscr{K}} \begin{pmatrix}-\tilde a_j \\-\tilde b_j\end{pmatrix} \equiv d p\boldsymbol e_\nu \mod p^2
        \end{align*}
        for some $d \nequiv 0 \bmod p$ and, as $\binom{\tilde a_i}{\tilde b_i}$ lies in the same set $\mathscr{L}_{\nu \mu}$ as $\binom{-\tilde a_i}{-\tilde b_i}$, one further has
        \begin{align*}
            \sum_{j\in \mathscr{K}} \begin{pmatrix}p-d_j\\\left(p-d_j\right)\mu_j\end{pmatrix} \equiv 0 \mod p.
        \end{align*}
        It follows that
        \begin{align*}
        \sum_{j \in \mathscr{K}} \begin{pmatrix}\tilde a_j\\\tilde b_j\end{pmatrix} = -\sum_{j \in \mathscr{K}} \begin{pmatrix}-\tilde a_j\\-\tilde b_j\end{pmatrix} \equiv - d p\boldsymbol e_\nu \mod p^2
        \end{align*}
        for some $d \nequiv 0 \bmod p$ and it further holds that
        \begin{align*}
            \sum_{j\in \mathscr{K}} \begin{pmatrix}d_j\\d_j\mu_j\end{pmatrix} \equiv 0 \mod p.
        \end{align*}
        This completes the proof for the weaker claim.
        Now let $\mathscr{K}\subset \mathscr H$ be a subset with $|\mathscr{K}| \le 2p$,
        \begin{align*}
            \sum_{i \in \mathscr{K}}
        \begin{pmatrix} d_i \\ d_i \mu_i
        \end{pmatrix}\equiv 0 \mod p \quad \textup{and} \quad \sum_{i \in \mathscr{K}} \begin{pmatrix} \tilde a_i\\ \tilde b_i \end{pmatrix} \equiv pd\boldsymbol e_\nu \mod p^2
        \end{align*}
        for some $d \nequiv 0 \bmod p$.
        Assuming that $|\mathscr{K}| \ge 2p-1$, there is, according to Lemma \ref{p^n} with $n=2$, a subset $\tilde{\mathscr{K}} \subset \mathscr{K}$ with $|\tilde{\mathscr{K}}| \le 2p-1$ and
        \begin{align*}
            \sum_{i \in \tilde{\mathscr{K}}} \begin{pmatrix}d_i\\d_i  \mu_i\end{pmatrix} \equiv 0 \mod p.
        \end{align*}
        It follows, that
        \begin{align*}
            \sum_{i \in \tilde{\mathscr{K}}} \begin{pmatrix}\tilde a_i\\ \tilde b_i\end{pmatrix} \equiv \boldsymbol e_\nu \sum_{i \in \tilde{\mathscr{K}}} d_i+ p \boldsymbol{e}_\nu \sum_{i \in \tilde{\mathscr{K}}} f_i \mod p^2,
        \end{align*}
        which is congruent to $0$ modulo $p$, but not necessarily incongruent to $0$ modulo $p^2$. 
        As 
        \begin{align*}
            \sum_{i \in \mathscr{K}\backslash \tilde{\mathscr{K}}} \begin{pmatrix}d_i\\d_i \mu_i\end{pmatrix} \equiv 0 \mod p 
        \end{align*}
        holds as well, one can deduce, that
        \begin{align*}
            \sum_{i \in \mathscr{K} \backslash \tilde{\mathscr{K}}} \begin{pmatrix}\tilde a_i\\\tilde b_i\end{pmatrix} \equiv \boldsymbol e_\nu \sum_{i \in \mathscr{K} \backslash \tilde{\mathscr{K}}} d_i +p \boldsymbol e_\nu \sum_{i \in \mathscr{K} \backslash \tilde{\mathscr{K}}} f_i \mod p^2,
        \end{align*}
        which is again congruent to $0$ modulo $p$. 
        For at least one of those sets, either $\tilde{\mathscr{K}}$ or $ \mathscr{K} \backslash \tilde{\mathscr{K}}$, the sum is not congruent to $0$ modulo $p^2$ as the sum over all $i \in \mathscr{K}$ is not, and therefore, it is impossible for both subsums to be congruent to $0$ modulo $p^2$. The set for which this sum is incongruent to $0$ modulo $p^2$ is a contraction to a variable of type $E_\nu^{l+1}$.
        
        Both subsets are non-empty and hence, as all $d_i$ are incongruent to $0$ modulo $p$, they contain at least $2$ elements. Thus, each one has a most $2p-2$ elements, which proves the claim.
     \end{proof}
     \begin{Lemma}
		\label{2p-1}
		Let $S \in \{C, P\}$ and $0\le m \le p-1$. Let there be $p+m$ variables of type $S^l$ and further $p-m-1$ variables of type $E_\nu^l$. Then one can contract them to an $S^{l+1}$ variable.
	\end{Lemma}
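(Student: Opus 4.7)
The plan is to reduce to Lemma \ref{onecolourfulandp-1differentvariablesofthesamecolour} when the colour distribution of the $S^l$ variables is favourable, and otherwise to combine a projection argument with a subset-sum pigeonhole in $\F_p^2/\F_p\boldsymbol{e}_\nu$.

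If any variable of type $S^l$ is already at level $l+1$, there is nothing to prove, so I may assume all $2p-1$ variables lie at level exactly $l$. Let $t$ denote the number of $S^l$ variables of colour $\nu$. If $t \ge m+1$, then $m+1$ of them are of type $S_\nu^l$, and Lemma \ref{onecolourfulandp-1differentvariablesofthesamecolour} applied to these together with the $p-m-1$ variables of type $E_\nu^l$ delivers the desired $S^{l+1}$ contraction.

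Suppose instead that $t \le m$, so that $p+m-t \ge p$ of the $S^l$ variables have colour different from $\nu$. Under the quotient map $\pi\colon\F_p^2\twoheadrightarrow\F_p^2/\F_p\boldsymbol{e}_\nu\cong\F_p$ these vectors have nonzero projections, while the $t$ colour-$\nu$ $S^l$ variables and the $p-m-1$ variables of type $E_\nu^l$ project to $0$. Applying Lemma \ref{p^n} with $n=1$ to the $\ge p$ nonzero projections furnishes a nonempty subset $J_1$ of the colour-$\ne \nu$ $S^l$ variables with vanishing projection sum, so $\sum_{i\in J_1}\binom{\tilde a_i}{\tilde b_i}\equiv\beta\boldsymbol{e}_\nu\pmod p$ for some $\beta\in\F_p$. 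I then look for a subset $J_2$ of the $t+(p-m-1)$ line variables (the $t$ colour-$\nu$ $S^l$ variables together with the $p-m-1$ variables of type $E_\nu^l$, all of which have nonzero corresponding integer in $\F_p$) whose corresponding integers sum to $-\beta$: the attainable sums form a set $\Sigma\subseteq\F_p$ with $|\Sigma|\ge\min(p,\,t+p-m)$ by a standard subset-sum estimate, and varying $J_1$ should yield a set $\Delta\subseteq\F_p$ of achievable $\beta$-values of size at least $\min(p,\,m-t+1)$. Since $(t+p-m)+(m-t+1)=p+1>p$, pigeonhole forces $\Delta\cap(-\Sigma)\neq\emptyset$; the corresponding $J_1\cup J_2$ then lies in the set of $2p-1$ given variables, contains the $S^l$ variables in $J_1$, and contracts to an $S^{l+1}$ variable.

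The hardest part will be the lower bound $|\Delta|\ge\min(p,\,m-t+1)$, a Cauchy--Davenport-style statement on the zero-sum subsets (in the projection) of the colour-$\ne\nu$ $S^l$ variables with prescribed second-coordinate sums. In the degenerate configurations where this bound is tightest the set $\Delta$ should already contain $0$ (so that $J_1$ is itself a zero-sum in $\F_p^2$ and no cancellation is necessary); otherwise it ought to be established by a polynomial-method argument or a more delicate case analysis paralleling the combinatorial lemmas of Section \ref{Combinatorial Results}.
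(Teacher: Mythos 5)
Your first case ($t\ge m+1$, reduce to Lemma \ref{onecolourfulandp-1differentvariablesofthesamecolour}) is correct, and the overall architecture of the second case (find a nonempty zero-projection subset $J_1$ of the colour-$\bar\nu$ variables, then cancel its $\boldsymbol{e}_\nu$-component with a subset $J_2$ of the colour-$\nu$ variables) is coherent. But the step you yourself flag as hardest is a genuine gap, and the bound you propose is not just unproven — it is false as stated. Take all $p+m-t$ colour-$\bar\nu$ variables of type $S^l$ to have the same coefficient vector $v$ modulo $p$: every nonempty zero-projection subset then has exactly $p$ elements and sums to $pv\equiv 0$, so $\Delta=\{0\}$, whereas your claim $|\Delta|\ge\min\left(p,\,m-t+1\right)$ can demand up to $p$ values. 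Your hedge that in such degenerate configurations $0\in\Delta$ (so $J_1$ alone is already a contraction) is exactly the nontrivial content that would have to be established, and neither the Cauchy--Davenport-style estimate nor the unspecified ``polynomial-method argument'' is carried out; without a proof of the dichotomy ``either $|\Delta|$ is large or $0\in\Delta$'' the pigeonhole step $|\Delta|+|\Sigma|>p$ has nothing to stand on except in the boundary case $t=m$, where $\Sigma=\F_p$ and any single $\beta$ suffices.

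The paper sidesteps this entirely: after discarding the trivial case, it splits on whether some colour (any colour, among all $2p-1$ variables) contains at least $p$ variables. If yes, at least one of these $p$ is an $S^l$ variable, since only $p-m-1\le p-1$ variables are not of type $S^l$, and Lemma \ref{onecolourfulandp-1differentvariablesofthesamecolour} applies. If no, then $I_{\textup{max}}\left(\mathscr{K}\right)\le p-1$ for the full set $\mathscr{K}$ of $2p-1$ variables, hence $q\left(\mathscr{K}\right)\ge p$, and Lemma \ref{TAEL3} produces a contraction using at least two different colours; since all non-$S^l$ variables share the single colour $\nu$, such a contraction necessarily contains an $S^l$ variable. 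The zero-sum-with-two-colours statement of Lemma \ref{TAEL3} (Davenport--Lewis) is precisely the missing ingredient in your approach; if you want to keep your projection framework, the cleanest repair is to replace the $\Delta$-bound by an appeal to that lemma rather than trying to prove a Cauchy--Davenport-type estimate from scratch.
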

	\begin{proof}
        If one of the $S^l$ variables is already an $S^{l+1}$ variable, the claim is fulfilled. Thus, one can assume, that the $S^l$ variables are $E^l$ variables as well. If there are $p$ variables of the same colour $\mu$, then at least one of them is an $S^l$ variables, because there are at most $p-1$ variables which are not. Hence, Lemma \ref{onecolourfulandp-1differentvariablesofthesamecolour} shows that one can contract them to an $S^{l+1} $ variable.
        
        Else, there are at most $p-1$ variables of the same colour. Let $\mathscr{K}$ be the set of indices of all $2p-1$ variables. Then, one has $I_\mx\left( \mathscr{K}\right) \le p-1$, and thus, $q\left( \mathscr{K} \right) \ge p$. By Lemma \ref{TAEL3}, the set $\mathscr{K}$ contains a contraction to  to a variable at level at least $l+1$, using at least two different colours. One can trace that variable back to at least one of the $S^l$ variables, because the variables which are not of type $S^l$ are all of the same colour, which proves the claim.
	\end{proof}
	\begin{Lemma}
		\label{liftingonespecialvariablehigher}
		Let $S \in \{C, P\}$ and $ 0\le m \le p-1$. Let there be $p-1$ variables of type $E_\nu^l$, $p-m-1$ variables of type $E_{\bar \nu}^l$ and $m+1$ variables of type $S^l$. Then one can contract them to an $S^{l+1}$ variable.
	\end{Lemma}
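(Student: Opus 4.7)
The plan is to follow the structure of Lemma~\ref{2p-1}, adding one extra layer of case analysis to cope with the fact that the $2p-m-2$ non-$S^l$ variables are no longer all of a single colour. As in that lemma, if one of the $m+1$ variables of type $S^l$ is already of type $S^{l+1}$ there is nothing to prove, so I would assume every such variable is at exact level $l$ and treat it as an $E^l$ variable for the rest of the argument.

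The first case distinction is whether some $S^l$ variable has colour $\nu$. If so, I adjoin one such variable to the $p-1$ variables of type $E_\nu^l$, obtaining $p$ variables of colour $\nu$ at least one of which is of type $S^l$, and conclude with Lemma~\ref{onecolourfulandp-1differentvariablesofthesamecolour}. Otherwise the $m+1$ variables of type $S^l$ combine with the $p-m-1$ variables of type $E_{\bar\nu}^l$ to give $p$ variables of colours different from $\nu$; if they all share a single colour $\mu$, the same lemma applies with $\mu$ in place of $\nu$. In the remaining situation these $p$ non-$\nu$ variables span at least two distinct colours, and I reassemble the full set $\mathscr{K}$ of $2p-1$ variables. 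Colour $\nu$ contributes exactly $p-1$ members to $\mathscr{K}$ and, by the failure of the previous sub-cases, each non-$\nu$ colour contributes at most $p-1$, so $I_\mx(\mathscr{K}) \le p-1$ and $q(\mathscr{K}) \ge p$; Lemma~\ref{TAEL3} then supplies a contraction $J \subseteq \mathscr{K}$ to a variable at level at least $l+1$ using at least two different colours.

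The main obstacle is to force this final contraction to trace back to an $S^l$ variable, since the non-$S^l$ part of $\mathscr{K}$ now spans several colours and two-colouredness of $J$ alone does not immediately imply that $J$ meets the $S^l$ part. I would split the argument once more according to which two colours $J$ uses. If $J$ combines colour $\nu$ with a single non-$\nu$ colour $\mu$, the $E_{\bar\nu}^l$ and $S^l$ variables of colour $\mu$ together with the $E_\nu^l$ variables should be enough to re-invoke Lemma~\ref{onecolourfulandp-1differentvariablesofthesamecolour} on a monochromatic collection that includes an $S^l$. If $J$ uses two distinct non-$\nu$ colours, the $S^l$ variables not lying in $J$ are repackaged with the $E_\nu^l$ variables and fed into Lemma~\ref{2p-1}. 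Verifying that every distribution of colours among the $m+1$ variables of type $S^l$ falls into one of these sub-cases is the most delicate point of the argument.
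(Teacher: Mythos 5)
Your opening reductions are sound and coincide with the paper's: dispose of the case where an $S^l$ variable is already of type $S^{l+1}$, use Lemma~\ref{onecolourfulandp-1differentvariablesofthesamecolour} when some $S^l$ variable has colour $\nu$ (or when the $p$ non-$\nu$ variables are monochromatic), and note that otherwise $I_\mx(\mathscr{K})\le p-1$, so $q(\mathscr{K})\ge p$. The gap is in the final step. Lemma~\ref{TAEL3} only asserts that \emph{some} contraction $J\subseteq\mathscr{K}$ to level at least $l+1$ exists which uses two colours; it gives you no control over which variables lie in $J$. In Lemma~\ref{2p-1} this sufficed because all non-$S^l$ variables were of a single colour, so two-colouredness forced $J$ to meet the $S^l$ variables. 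Here, as soon as $m<p-1$, the non-$S^l$ variables themselves span colour $\nu$ and (possibly several) other colours via the $E_{\bar\nu}^l$ variables, so $J$ may consist entirely of colour-$\nu$ variables together with some $E_{\bar\nu}^l$ variables and never touch an $S^l$ variable; the resulting variable then need not be of type $S^{l+1}$. Your two proposed repairs do not have the variable counts to close this: re-invoking Lemma~\ref{onecolourfulandp-1differentvariablesofthesamecolour} would require $p$ variables of one single colour, at least one of them an $S^l$ variable of that colour, but in this case every non-$\nu$ colour contains at most $p-1$ of your variables and the colour-$\nu$ block contains no $S^l$ variable; and Lemma~\ref{2p-1} needs at least $p$ variables of type $S^l$ (it consumes $p+m'$ of them), whereas you only have $m+1\le p$, with equality only when $m=p-1$.

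What is actually needed, and what the paper supplies at exactly this point, is an existence statement with a \emph{prescribed nonvanishing variable}: after the reduction to $p-1$ variables of colour $\nu$ and $p$ variables of colours $\neq\nu$ of which one, say $x_{2p-1}$, is the $S^l$ variable, one must solve $\sum_{i=1}^{p-1}c_i\boldsymbol{e}_\nu x_i^k+\sum_{i=p}^{2p-1}c_i\boldsymbol{e}_{\nu_i}x_i^k\equiv 0 \bmod p$ with $x_{2p-1}\nequiv 0\bmod p$. The paper does this by a tailored Olson--Mann argument: an invertible linear substitution reduces to $\nu=0$; Lemma~\ref{nicht0} applied to the second coordinates of the non-$\nu$ block produces values with $x_{2p-1}\nequiv 0\bmod p$, and a second application of Lemma~\ref{nicht0}, with an auxiliary variable $y_0$ (using $y_0^k\equiv 1\bmod p$ for $p\nmid y_0$ since $p-1\mid k$), absorbs the remaining constant into the colour-$\nu$ block. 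This forcing of a specific variable into the contraction is the missing ingredient in your argument, and it cannot be extracted from Lemma~\ref{TAEL3} by case analysis on which colours $J$ happens to use.
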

	\begin{proof}
        If one of the variable of type $S^l$ is already an $S^{l+1}$ variable, the claim is fulfilled, thus one can assume that these variables are of type $E^l$ as well. Furthermore, one can assume, that none of the $S^l$ variables is of type $S_\nu^l$, because else, Lemma \ref{onecolourfulandp-1differentvariablesofthesamecolour} can be use to contract the $p-1$ variables of type $E_\nu^l$ together with the $S_\nu^l$ variable to an $S^{l+1}$ variable.
        
        Therefore, one can assume that one has $p-1$ variables of type $E_\nu^l$ and $p$ variables of type $E_{\bar \nu}^l$ from which at least one is an $S^l$ variable. For convenience name the $E_\nu^l$ variables $x_1, \dots, x_{p-1}$ and the $E_{\bar\nu}^l$ variables $x_p, \dots, x_{2p-1}$, where $x_{2p-1}$ is an $S^l$ variable. Furthermore, let $c_i$ be the corresponding integer of $x_i$ for $1 \le i \le 2p-1$ and $\nu_i\neq \nu$ the colour of the variables $x_i$ for $p \le i \le 2p-1$. These $2p-1$ variables contract to an $S^{l+1}$ variable if there is a solution of
        \begin{align*}
            \sum_{i=1}^{p-1} c_i \boldsymbol{e}_{\nu} x_i^k + \sum_{i=p}^{2p-1} c_i \boldsymbol{e}_{\nu_i}x_i^k \equiv 0 \mod p,
        \end{align*}
        with $x_{2p-1} \nequiv 0 \bmod p$. The existence of such a solution follows from the proof of Theorem $2$ by Olson and Mann~\cite{ppolsonmann}, but not from the statement of the theorem, from which one can only conclude the existence of a solution, but not that one has one with $x_{2p-1}\nequiv 0 \bmod p$. Thus, for the convenience of the reader, the following contains a proof that such a solution exists. In essence the proof uses the same methods as the proof by Olson and Mann, but is tailored for this exact case.
        
        By applying the linear transformation induced by
        \begin{align*}
            \begin{pmatrix}
             1&0\\
             1&-\nu
            \end{pmatrix}
        \end{align*}
        if $\nu \neq 0$, one can transform the case $\nu\neq 0$ to the case $\nu=0$, because
        \begin{align*}
            \begin{pmatrix}
             1&0\\
             1&-\nu
            \end{pmatrix} \boldsymbol{e}_\nu= \nu \boldsymbol{e}_0  \qquad \textup{and} \qquad \begin{pmatrix}
             1&0\\
             1&-\nu
            \end{pmatrix} \boldsymbol{e}_{\nu_i}\in \mathscr{L}_{\tilde \nu}
        \end{align*}
        for some $\tilde \nu \neq \nu$.
        All that remains is to solve a system of the kind
        \begin{align}
        \label{nu=0}
            \sum_{i=1}^{p-1} \begin{pmatrix}\alpha_i\\0\end{pmatrix} x_i^k + \sum_{i=p}^{2p-1} \begin{pmatrix}\beta_i\\ \gamma_i\end{pmatrix}x_i^k \equiv 0 \mod p
        \end{align}
        where $p \nmid \alpha_i$ for $1\le i \le p-1$ and $p \nmid \gamma_i$ for $p \le i \le 2p-1$ such that $p \nmid x_{2p-1}$. By Lemma \ref{nicht0}, there is a solution $y_i$ with $p\le i \le 2p-1$ of the equation
        \begin{align*}
            \sum_{i=p}^{2p-1} \gamma_i y_i^k \equiv 0 \mod p
        \end{align*}
        with $y_{2p-1} \nequiv 0 \bmod p$. This reduces the system \eqref{nu=0} by setting $x_i=y_i$ for $p \le i \le 2p-1$ to
        \begin{align}
        \label{C}
            \sum_{i=1}^{p-1} \alpha_i x_i^k+ C\equiv 0 \mod p
        \end{align}
        for $C= \sum_{i=p}^{2p-1} \beta_i y_i^k$. Now consider an additional variable $y_0$. If $p \nmid y_0$ then $y_0^k \equiv 1 \bmod p$, hence, applying Lemma \ref{nicht0} again, this time to the system
        \begin{align*}
            \sum_{i=1}^{p-1} \alpha_i x_i^k+ Cy_0^k\equiv 0 \mod p
        \end{align*}
        provides a solution $y_i$ with $p \nmid y_0$. It follows that $x_i=y_i$ for $1\le i \le p-1$ is also a solution for \eqref{C}, and therefore, one has a solution of \eqref{nu=0} given by $x_i=y_i$ with $1 \le i \le 2p-1$ with $p \nmid x_{2p-1}$. This completes the proof.
	\end{proof}
	\subsection{Contracting Several Variables}
	The lemmata in this section show how to contract a set of variables at level at least $l$ to another set of variables at level at least $l+1$.
	\begin{Lemma}
		\label{5.1}
		Let $\mathscr{H} \subset \{ 1, \dots, m_0\}$ be a subset of indices of variables at level $0$. Then $\mathscr{H}$ contains at least
		\begin{align*}
			\textup{min} \left( \left \lfloor  \frac{|\mathscr{H}|}{2 p-1 }\right \rfloor , \left \lfloor \frac{q \left( \mathscr{H}\right)}{p} \right \rfloor \right)
		\end{align*}
		pairwise disjoint contractions to variables of type $P^1$.
	\end{Lemma}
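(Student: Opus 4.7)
I would proceed by induction on $|\mathscr{H}|$, writing $N(\mathscr{H})=\min(\lfloor|\mathscr{H}|/(2p-1)\rfloor, \lfloor q(\mathscr{H})/p\rfloor)$. When $|\mathscr{H}|<2p-1$ or $q(\mathscr{H})<p$ the claimed minimum vanishes and the statement is vacuous, so I may assume $|\mathscr{H}|\ge 2p-1$ and $q(\mathscr{H})\ge p$.

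Fix a colour $\nu_0$ achieving $I_\mx(\mathscr{H})$ and select a subset $\mathscr{K}\subseteq\mathscr{H}$ of exactly $2p-1$ variables consisting of up to $p-1$ of colour $\nu_0$ together with enough variables of the remaining colours to reach size $2p-1$, distributed so that no single non-$\nu_0$ colour contributes more than $p-1$ variables. This ensures $I_\mx(\mathscr{K})\le p-1$, hence $q(\mathscr{K})\ge p$, so Lemma~\ref{TAEL3} supplies a contraction $J\subseteq\mathscr{K}$ containing variables of at least two different colours; by definition $J$ is a contraction to a $P^1$ variable. Since $J$ corresponds to a non-empty zero-sum in $(\Z/p\Z)^2$, Olson's theorem (Lemma~\ref{p^n} with $n=2$) forces $|J|\le 2p-1$, and the construction of $\mathscr{K}$ guarantees that at most $p$ variables of $J$ are not of colour $\nu_0$, so $I_{\nu_0}(J)\ge |J|-p$.

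Writing $\mathscr{H}':=\mathscr{H}\setminus J$ and applying the induction hypothesis yields $N(\mathscr{H}')$ further pairwise disjoint contractions to $P^1$ variables inside $\mathscr{H}'$, for a total of $N(\mathscr{H}')+1$. The bound $|J|\le 2p-1$ gives $\lfloor|\mathscr{H}'|/(2p-1)\rfloor\ge\lfloor|\mathscr{H}|/(2p-1)\rfloor-1$. For the $q$-term I would argue by cases on whether $\nu_0$ remains a majority colour of $\mathscr{H}'$: if it does, then $I_\mx(\mathscr{H})-I_\mx(\mathscr{H}')=I_{\nu_0}(J)\ge|J|-p$; otherwise the new majority $\nu^\ast$ of $\mathscr{H}'$ satisfies $I_{\nu^\ast}(\mathscr{H}')>I_{\nu_0}(\mathscr{H}')$, from which the same inequality $I_\mx(\mathscr{H})-I_\mx(\mathscr{H}')\ge|J|-p$ follows using $\nu_0$-majority of $\mathscr{H}$ and the composition of $\mathscr{K}$. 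In either case $q(\mathscr{H}')\ge q(\mathscr{H})-p$ and $\lfloor q(\mathscr{H}')/p\rfloor\ge\lfloor q(\mathscr{H})/p\rfloor-1$, completing the induction.

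The main obstacle is the bipartite configuration in which every non-$\nu_0$ variable of $\mathscr{H}$ lies in a single colour class $\nu_1$: there no $\mathscr{K}$ of the required form with $I_\mx(\mathscr{K})\le p-1$ can be built. However $I_{\nu_0}(\mathscr{H})\ge q(\mathscr{H})$ forces $\lfloor q(\mathscr{H})/p\rfloor\le\lfloor|\mathscr{H}|/(2p-1)\rfloor$, so $N(\mathscr{H})=\lfloor q(\mathscr{H})/p\rfloor$, and I would construct the required disjoint contractions directly by iteratively invoking the Davenport constant of $\Z/p\Z$, which equals $p$, to extract disjoint non-empty zero-sums of size at most $p$ among the corresponding integers of the $\nu_0$- and $\nu_1$-coloured variables separately, then pairing one zero-sum from each colour class into a single contraction containing two different colours.
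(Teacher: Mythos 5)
Your overall strategy (greedily extract one two-coloured contraction at a time and charge each extraction against the two budgets $\lfloor|\mathscr{H}|/(2p-1)\rfloor$ and $\lfloor q(\mathscr{H})/p\rfloor$) is reasonable, and your bipartite endgame via the Davenport constant of $\Z/p\Z$ is fine; but the induction step has a genuine gap at exactly the delicate point, namely the claim $q(\mathscr{H}\setminus J)\ge q(\mathscr{H})-p$. Your first case rests on ``at most $p$ variables of $J$ are not of colour $\nu_0$'', which holds only if $\mathscr{K}$ contains $p-1$ variables of colour $\nu_0$, i.e.\ only if $I_\mx(\mathscr{H})\ge p-1$; the regime $I_\mx(\mathscr{H})\le p-2$ is never treated (it can be rescued, since there $\lfloor q(\mathscr{H})/p\rfloor\le\lfloor|\mathscr{H}|/(2p-1)\rfloor$ forces $\lfloor|\mathscr{H}|/(2p-1)\rfloor\le 1$, but you do not say this). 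More seriously, your second case asserts $I_\mx(\mathscr{H})-I_\mx(\mathscr{H}')\ge|J|-p$ ``using $\nu_0$-majority of $\mathscr{H}$ and the composition of $\mathscr{K}$'', and this does not follow; it is false for admissible choices. Take $p=5$ and $\mathscr{H}$ with $I_0(\mathscr{H})=I_1(\mathscr{H})=12$ and $I_2(\mathscr{H})=3$, so $|\mathscr{H}|=27$, $q(\mathscr{H})=15$ and the lemma promises $\min(3,3)=3$ contractions. With $\nu_0=0$ your rule allows $\mathscr{K}$ to consist of $4$ variables of colour $0$, $3$ of colour $1$ and $2$ of colour $2$, and Lemma~\ref{TAEL3} may hand you the two-coloured contraction $J$ consisting of the four colour-$0$ variables and the two colour-$2$ variables (for instance with corresponding integers $1,1,1,2$ and $2,3$). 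Then $|J|=6$, colour $1$ is untouched, so $I_\mx(\mathscr{H}')=I_\mx(\mathscr{H})=12$ and the claimed inequality reads $0\ge 1$; moreover $q(\mathscr{H}')=9$, so $\lfloor q(\mathscr{H}')/p\rfloor$ drops by $2$ and your induction certifies only $\min(2,1)+1=2<3$ contractions. Since you have no control over which $J$ Lemma~\ref{TAEL3} returns and your prescription for $\mathscr{K}$ does not exclude such compositions, the inductive bound is not established; a repair needs a more careful choice of $\mathscr{K}$ (for example taking $p-1$ variables from each of the two largest colour classes whenever possible) together with a correct accounting of $q$ in the case where the maximal colour changes.

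Two smaller remarks. The appeal to Olson's theorem to get $|J|\le 2p-1$ is misplaced (Olson bounds the length of a sequence that guarantees a zero-sum, not the size of a given zero-sum), though harmless, since $J\subseteq\mathscr{K}$ gives the bound trivially. Note also that the paper does not reprove this lemma but cites Lemmata 1 and 3 and Section 6 of Davenport and Lewis, whose argument is of the same greedy type you attempt; the budget bookkeeping you elide is precisely the content of that argument.
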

	\begin{proof}
		This is the special case $\delta =\textup{gcd}\left(k, p-1\right)=p-1$ of a result from Lemmata 1 and 3 of~\cite{twoaddeq} which is proved in the second paragraph of Section 6 of that paper.
	\end{proof}
	\begin{Lemma}
        \label{FS2}
        Let $S \in \{C, P\}$ and let there be $x$ variables of type $S^l$. They contract to $\left \lceil \frac{x+3}{p} \right \rceil -3$ variables of type $S^{l+1}$, where each contraction contains at most $p$ variables, leaving at least $\min\{2p-2,x\}$ variables of type $S^{l}$ unused.
        \end{Lemma}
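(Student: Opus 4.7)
The plan is to apply Lemma~\ref{FS1} greedily to the pool of $S^l$ variables. Each application requires at least $3p-2$ variables of type $S^l$, produces one new variable of type $S^{l+1}$, and consumes at most $p$ variables from the pool. So starting with a pool of size $x$, after $n$ successful applications the pool still has at least $x-np$ variables of type $S^l$, and the greedy procedure continues as long as this quantity remains at least $3p-2$.

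The core bookkeeping task is to verify that the greedy procedure performs precisely $\bigl\lceil \tfrac{x+3}{p}\bigr\rceil-3$ iterations (interpreted as $0$ when that number is non-positive). The largest $n$ for which the $n$-th iteration is still possible satisfies $x-(n-1)p \ge 3p-2$, i.e.\ $n\le \tfrac{x+2}{p}-2$, so the total count is $\bigl\lfloor \tfrac{x+2}{p}\bigr\rfloor -2$. The elementary identity $\lceil m/p\rceil = \lfloor (m-1)/p\rfloor + 1$ applied with $m=x+3$ converts this to the claimed expression $\bigl\lceil \tfrac{x+3}{p}\bigr\rceil-3$. In the degenerate range $x<3p-2$ one simply performs no iteration, and the conclusion is vacuous.

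For the tail estimate on the unused $S^l$ variables, I would distinguish two cases. If no iteration was performed, then all $x$ original variables remain, giving at least $\min\{x,2p-2\}$ unused. Otherwise the final iteration began with at least $3p-2$ eligible variables and consumed at most $p$, so at least $2p-2$ variables of type $S^l$ remain afterwards. Combining both cases yields the stated lower bound $\min\{2p-2,x\}$. I do not anticipate any real obstacle; the lemma is essentially a bookkeeping corollary of Lemma~\ref{FS1}, with the only subtlety being the ceiling/floor conversion and the case split for the tail bound.
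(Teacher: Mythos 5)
Your proposal is correct and follows essentially the same route as the paper: repeated greedy application of Lemma \ref{FS1}, with the same ceiling arithmetic for the number of contractions and the same observation that the last application starts from at least $3p-2$ variables and hence leaves at least $2p-2$ of type $S^l$ untouched. The only difference is that the paper separately treats the subcase in which some of the $x$ variables are already of type $S^{l+1}$, which your argument legitimately subsumes because Lemma \ref{FS1} is stated for variables of type $S^l$ and, in the paper's convention, a ``contraction'' there may consist of a single such variable, so your bookkeeping (each step consuming at most $p$ pool variables) goes through unchanged.
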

    \begin{proof}
        For $x\le 3p-3$ the statement is trivial. Therefore, let $x \ge 3p-2$. Assume first, that all $x$ variables are also of type $E^l$. Then there is a contraction of at most $p$ variables to an $S^{l+1}$ variable due to Lemma \ref{FS1}. Hence, after doing this $\left \lceil \frac{x+3}{p} \right \rceil -4$ times, there are still at least
        \begin{align*}
         x-\left(\left \lceil \frac{x+3}{p} \right \rceil -4\right)p\ge x-\left(x+3+p-1-4p\right)=3p-2
        \end{align*}
        unused $S^l$ variables. Hence, one can apply Lemma \ref{FS1} once more, to obtain $\left \lceil \frac{x+3}{p}\right \rceil -3$ contractions, leaving at least $2p-2$ variables unused. Thus, in this case, the claim holds.
        
        Now assume that of the $x$ variables of type $S^{l}$ there are $y$ variables already of type $S^{l+1}$ while the remaining $x-y$ variables are of type $E^l$.
        One has
        \begin{align*}
            y \ge \left \lceil \frac{x+3}{p} \right \rceil -3+2p-2-\left(x-y\right)
        \end{align*}
        because of $x \ge 3p-2$. If $x-y \le 2p-2$, one can divide the $y$ variables of type $S^{l+1}$ in one set containing $\left \lceil \frac{x+3}{p} \right \rceil -3 $ and one set containing $2p-2-\left(x-y\right)$ of them. The variables in the second set together with the remaining $x-y$ variables of type $S^l$ are at least $2p-2$ variables of type $S^l$, while the first set contains the $\left \lceil \frac{x+3}{p} \right \rceil -3 $ variables of type $S^{l+1}$.
        Thus one can assume, that $x-y \ge 2p-1$ and use the first part of this proof. The set of the $x-y$ variables of type $E^l$ contains at least
        \begin{align*}
            \left \lceil \frac{x-y+3}{p} \right \rceil -3
        \end{align*}
        contractions to variables of type $S^{l+1}$, leaving at least $2p-2$ variables of type $S^l$ unused. Together with the $y$ variables of type $S^{l+1}$ this gives at least
        \begin{align*}
            \left \lceil \frac{x-y+3}{p} \right \rceil -3 +y= \left \lceil \frac{x-y+3}{p} + y \right \rceil -3 = \left \lceil \frac{x+y\left(p-1\right)+3}{p} \right \rceil -3 \ge \left \lceil \frac{x+3}{p} \right \rceil -3
        \end{align*}
        to variable of type $S^{l+1}$.
    \end{proof}
	 \begin{Lemma}
		\label{FS6klein}
		Let there be $x$ variables of type $E_\nu^l$. They contract to $\left \lceil \frac{x}{2p-2} \right \rceil -4$ variables of type $E_\nu^{l+1}$, leaving at least $\min \{6p-9,x\}$ variables of type $E_\nu^l$ unused.
	\end{Lemma}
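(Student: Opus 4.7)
\emph{Proof plan.} I would iterate Lemma~\ref{FS5}, each application of which, given $4p-3$ variables of type $E_\nu^l$ whose corresponding integers $c_i$ all lie modulo $p$ in a single half of $\{1,\dots,p-1\}$, produces one $E_\nu^{l+1}$ variable while consuming at most $2p-2$ of them. At each step I split the still-available $E_\nu^l$ variables into two classes according to whether $c_i \bmod p$ lies in $\{1,\dots,\tfrac{p-1}{2}\}$ or in $\{\tfrac{p+1}{2},\dots,p-1\}$. By pigeonhole the larger class contains at least $\lceil t/2 \rceil$ members, where $t$ is the current total; as soon as $t \ge 8p-7$ this is $\ge 4p-3$, and Lemma~\ref{FS5} applies to the larger class to extract a contraction of at most $2p-2$ variables to one $E_\nu^{l+1}$ variable.

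If $x \le 8p-8$ then $\lceil x/(2p-2)\rceil - 4 \le 0$, so no contraction is required and all $x$ variables remain, which is $\ge \min\{6p-9,x\}$. Hence I may assume $x \ge 8p-7$ and set $n := \lceil x/(2p-2)\rceil - 4 \ge 1$; the task is to show that $n$ iterations can be executed and that $\ge 6p-9$ variables of type $E_\nu^l$ remain afterwards.

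The bookkeeping runs as follows. The inequality $\lceil x/(2p-2)\rceil \ge n+4$ is equivalent to $x > (n+3)(2p-2)$. Consequently, before the $k$-th iteration (for $1 \le k \le n$) the number of unused $E_\nu^l$ variables is at least $x - (k-1)(2p-2) > (n-k+4)(2p-2) \ge 4(2p-2) = 8p-8$, i.e.\ $\ge 8p-7$, so Lemma~\ref{FS5} is indeed applicable each time. After all $n$ iterations the unused count is at least $x - n(2p-2) > 3(2p-2) = 6p-6$, so at least $6p-5$, which exceeds $6p-9 = \min\{6p-9,x\}$ in this regime.

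The only delicate point is reconciling the ceiling target $\lceil x/(2p-2)\rceil - 4$ with the per-step consumption rate $2p-2$; the strict inequality $x > (n+3)(2p-2)$ is precisely what supplies both the threshold $8p-7$ needed to invoke Lemma~\ref{FS5} at the final iteration and the required $6p-9$ leftover, each with a little room to spare. No further idea is needed beyond partitioning by half of $\{1,\dots,p-1\}$ modulo $p$ and a clean integer-part calculation.
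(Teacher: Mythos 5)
Your proposal is correct and follows essentially the same route as the paper: split the available $E_\nu^l$ variables by whether $c_i \bmod p$ lies in $\{1,\dots,\tfrac{p-1}{2}\}$ or $\{\tfrac{p+1}{2},\dots,p-1\}$, use pigeonhole to keep one class above the $4p-3$ threshold of Lemma~\ref{FS5} as long as at least $8p-7$ variables remain, and do the ceiling bookkeeping with consumption at most $2p-2$ per step. The only cosmetic difference is that the paper runs $\lceil x/(2p-2)\rceil-5$ steps and then argues one final contraction separately, while you handle all $n=\lceil x/(2p-2)\rceil-4$ steps uniformly via the inequality $x>(n+3)(2p-2)$; both yield the required $6p-9$ leftover with room to spare.
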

	\begin{proof}
        For $x < 8p-7$ the statement is trivial. If $x\ge 8p-7$, one can divide the $x$ variables in two sets. Those for which the corresponding integer $c_i$ is congruent to one element in $\{ 1, \dots, \frac{p-1}{2}\}$ modulo $p$, and the remaining variables. As long as there are at least $8p-7$ variables left, at least one of these sets contains at least $4p-3$ variables, which indicates that one can contract at most $2p-2$ of them to a variable of type $E_\nu^{l+1}$ due to Lemma \ref{FS5}. Doing this $\left \lceil \frac{x}{2p-2} \right \rceil -5$ times leaves at least
		\begin{align*}
		x- \left( 2p-2\right) \left( \left \lceil \frac{x}{2p-2} \right \rceil -5\right) \ge x-x-2p+3+ 10p-10= 8p-7
		\end{align*}
		unused variables, hence, there is another contraction, leaving at least $6p-9$ variables unused.
	\end{proof}
    \begin{Lemma}
    	\label{FS6}
        A set of $x\ge 3p^2-3p+1$ variables of type $E_\nu^l$ contracts to $\left \lceil \frac x p \right \rceil - 2p+ \frac{p-3}{2}$ variables of type $E_\nu^{l+1}$ for $p \ge 5$. A set of $x \ge 2p^2-2p+1$ variables of type $E_\nu^l$ contracts to $\left \lceil \frac x p \right \rceil - 2p+ 3$ variables of type $E_\nu^{l+1}$ for $p=5$. In both cases, this leaves at least $6p-9$ of the $E_\nu^l$ variables unused.
    \end{Lemma}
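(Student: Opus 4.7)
My plan is to combine Lemma~\ref{FS3} with Lemma~\ref{FS6klein}. Lemma~\ref{FS3} produces one $E_\nu^{l+1}$ variable using at most $p$ of the given variables but requires a nuance of size at least $T = 3p-2$ (or $T = 2p-1$ when $p = 5$), while Lemma~\ref{FS6klein} can use up to $2p-2$ variables per contraction but requires no single-nuance concentration. The idea is to exhaust Lemma~\ref{FS3} first and then mop up with Lemma~\ref{FS6klein}.

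The algorithm I would run is: as long as some colour nuance $(\nu, \mu)$ contains at least $T$ variables of type $E_{\nu\mu}^l$ and the total $y$ of remaining $E_\nu^l$ variables satisfies $y \ge 7p - 9$, apply Lemma~\ref{FS3} to obtain an $E_\nu^{l+1}$ using at most $p$ variables. Pigeonhole over the $p$ nuances guarantees a nuance of size at least $T$ whenever $y \ge p(T-1)+1$, so the loop terminates in one of two configurations: (A) every nuance has fewer than $T$ variables, whence $y \le p(T-1)$; or (B) $y$ has fallen into $[6p-9,\, 7p-10]$. In both cases $y \ge 6p-9$ at termination, because each iteration removes at most $p$ variables — which is precisely the required lower bound on unused variables.

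Let $k_1$ be the number of FS3 contractions; since each uses at most $p$ variables, $k_1 \ge \lceil(x-y)/p\rceil$. In configuration (A) I would apply Lemma~\ref{FS6klein} to the leftover, gaining $\lceil y/(2p-2)\rceil - 4$ further $E_\nu^{l+1}$ variables. The total count is at least
\[
\left\lceil \tfrac{x-y}{p}\right\rceil + \left\lceil \tfrac{y}{2p-2}\right\rceil - 4,
\]
which is decreasing in $y$ because $1/(2p-2) < 1/p$. Its minimum is therefore attained at the worst case $y = p(T-1)$, and a direct evaluation matches the claimed bound in each of the two cases $p \ge 5$ and $p = 5$. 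Configuration (B) is easier: Lemma~\ref{FS6klein} contributes nothing, but $k_1 \ge \lceil(x - 7p + 10)/p\rceil$ already dominates the target for $p \ge 5$.

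The main obstacle is calibrating the cutoff $y \ge 7p-9$: too large sacrifices FS3 contractions, too small risks leaving fewer than $6p-9$ unused. The value $7p-9$ is the unique threshold guaranteeing both bookkeeping constraints simultaneously, with the delicate worst-case computation at $y = p(T-1)$ meeting the target exactly. The rest of the proof is routine arithmetic with ceilings.
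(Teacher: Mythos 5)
Your proposal is correct and follows essentially the same route as the paper: there, too, Lemma~\ref{FS3} is applied repeatedly (pigeonhole over the $p$ colour nuances, possible as long as at least $3p^2-3p+1$, resp.\ $2p^2-2p+1$ for $p=5$, variables remain) and the leftover block is then handled by Lemma~\ref{FS6klein}, the only difference being that the paper performs a fixed number of FS3 steps rather than your greedy loop with the $7p-9$ cutoff. Your one loose point --- $\lceil (x-y)/p\rceil+\lceil y/(2p-2)\rceil$ is not literally decreasing in $y$ because of the ceilings --- is harmless, since the real-valued estimate together with integrality yields the claimed count for every admissible $y$.
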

    \begin{proof}
        A set of at least $\left(3p-3\right)p +1$ variables of type $E_\nu^l$ contains at least $3p-2$ variables which are of the same colour nuance. By Lemma \ref{FS3}, one can contract at most $p$ variables of them to a variable of type $E_\nu^{l+1}$. Repeating this as often as possible provides $\left \lceil \frac  x p \right \rceil - 3p +3$ variables of type $E_\nu^{l+1}$ and leaves at least 
        \begin{align*}
            x- p \left(\left \lceil \frac  x p \right \rceil - 3p +3\right)\ge x-\left(x+ p -1 -3 p^2+3p\right)= 3p^2-4p+1
        \end{align*}
        unused $E_\nu^l$ variables. 
        For $p=5$ this can be done as long as there are at least $\left( 2p-2\right)p +1$ variables left. Therefore, one can do it $\left \lceil \frac  x p \right \rceil - 2p +2$ times, leaving at least
        \begin{align*}
        	x- p \left(\left \lceil \frac  x p \right \rceil - 2p +2\right)\ge x-\left(x+ p -1 -2 p^2+2p\right)= 2p^2-3p+1
        \end{align*}
        unused variables.
        Using Lemma \ref{FS6klein} provides another $p+\frac{p-1}{2}-4$ variables of type $E_\nu^{l+1}$ for $p\ge5$ and one for $p=5$, while leaving at least $6p-9$ unused variables. All in all, one obtains
        \begin{align*}
           \left \lceil \frac x p \right \rceil -3p +3 + p + \frac{p-1} 2 -4= \left \lceil \frac x p \right \rceil -2p + \frac{p-3} 2 
        \end{align*}
        variables of type $E_\nu^{l+1}$ for $p\ge 5$ and
        \begin{align*}
            \left \lceil \frac x p \right \rceil -2p +2 +1= \left \lceil \frac x p \right \rceil -2p +3
        \end{align*}
        for $p=5$.
    \end{proof}
    \begin{Lemma}
		\label{x-y-Lemma}
		Let $S \in \{C, P\}$ and $x$, $y$ and $z$ be non-negative integers with $y+z \ge \left(2-m\right) p-2$ for some $m \in \{0,1,2\}$ and $x-m \ge 0$. Let there be $\left(p-1\right)y$ variables of type $E_\nu^l$, $\left(p-1\right)y$ variables of type $E_{\bar \nu}^l$ and $px+y+z$ variables of type $S^l$. Then one can contract them to $x+y-m$ variables of type $S^{l+1}$ without using $z+mp$ of the variables of type $S^l$.
	\end{Lemma}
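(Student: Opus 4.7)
The plan is to produce the $x+y-m$ target variables of type $S^{l+1}$ in two phases. In the first phase I would apply Lemma \ref{FS1} iteratively $x-m$ times to the $S^l$-pool alone, extracting one $S^{l+1}$ variable per iteration while consuming at most $p$ variables of type $S^l$ each time. In the second phase I would apply Lemma \ref{liftingonespecialvariablehigher}, with its internal parameter set to $0$, exactly $y$ times; each such application absorbs $p-1$ variables of $E_\nu^l$, $p-1$ of $E_{\bar\nu}^l$, and a single variable of type $S^l$. The second phase therefore exhausts both colour classes of $E$-variables (we have exactly $(p-1)y$ of each) and consumes $y$ further variables of type $S^l$.

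The ordering is essential: the pure $S^l$ phase must come first. Lemma \ref{FS1} requires at least $3p-2$ variables of type $S^l$ to be present, and this constraint is tightest at the very last of the $x-m$ iterations, when the pool has shrunk to $(m+1)p + y + z$ variables. The inequality $(m+1)p + y + z \ge 3p-2$ rearranges to precisely the hypothesis $y + z \ge (2-m)p - 2$, so the bound in the statement is exactly calibrated to make this final iteration possible. Had the mixed contractions been performed first, the $S^l$-pool would already have dropped to $px + z$ before the pure phase began, forcing the stronger condition $z \ge (2-m)p - 2$, which is not guaranteed by the hypothesis when $y > 0$.

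Adding up the two phases, we produce $(x-m) + y = x+y-m$ variables of type $S^{l+1}$ and consume at most $(x-m)p + y$ variables of type $S^l$, so at least $px + y + z - (x-m)p - y = z + mp$ variables of type $S^l$ remain unused, as required. The availability checks for the second phase are routine: at its $j$-th step we still have $(p-1)(y-j+1) \ge p-1$ variables of each $E$-type and at least $z + mp + 1$ variables of type $S^l$ in the pool, so Lemma \ref{liftingonespecialvariablehigher} applies. The main obstacle is recognising that the hypothesis bound is calibrated to match the tightest instance of Lemma \ref{FS1} under the correct ordering of the two phases; once that ordering is fixed, the remaining verifications are bookkeeping.
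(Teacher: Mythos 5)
Your proposal is correct and follows essentially the same route as the paper's proof: apply Lemma \ref{FS1} exactly $x-m$ times to the $S^l$ pool (the hypothesis $y+z\ge\left(2-m\right)p-2$ guaranteeing $3p-2$ variables are still present at the last application), and then form $y$ sets of one $S^l$ variable with $p-1$ variables of each of $E_\nu^l$ and $E_{\bar\nu}^l$, each contracting via Lemma \ref{liftingonespecialvariablehigher}. The bookkeeping showing $z+mp$ variables of type $S^l$ remain unused matches the paper as well.
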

	\begin{proof}
		Using Lemma \ref{FS1} to contract $p$ of the variables of type $S^l$ to an $S^{l+1}$ variable can be done $x-m$ times. This leaves $y+z+mp\ge 2p-2$ variables of type $S^{l}$. Then, one can construct $y$ sets, each consisting of one $S^l$ variable, $p-1$ variables of type $E_\nu^l$ and $p-1$ variables of type $E_{\bar \nu}^l$. By Lemma \ref{liftingonespecialvariablehigher}, each of this sets contains a contraction to an $S^{l+1}$ variable, giving a total of $x+y-m$ variables of type $S^{l+1}$ as claimed, without using $z+mp$ variables of type $S^l$.
	\end{proof}
    \begin{Lemma}
        \label{neu}
        Let $S \in \{C, P\}$ and $x$ be a non-negative integer. Let $\mathscr{K}$ be a set of $E^l$ variables with $| \mathscr{K}| \ge \left( 2p-2\right)x + p^2-3p+1$ and $q \left( \mathscr{K} \right) \ge \left(p-1\right) x$ and let there be further $x$ variables of type $S^l$. Then one can contract them to $x$ variables of type $S^{l+1}$.
	\end{Lemma}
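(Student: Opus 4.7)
The plan is to proceed by induction on $x$, with the trivial base case $x=0$. In the inductive step I would find a single contraction that uses exactly one of the $x$ variables of type $S^l$ together with exactly $2p-2$ variables from $\mathscr{K}$, producing one $S^{l+1}$ variable, after which the residual data (the remaining $\mathscr{K}'$ and $x-1$ variables of type $S^l$) still satisfies the hypotheses of the lemma with $x$ replaced by $x-1$. Since every output $S^{l+1}$ variable must trace back to an $S^l$ input and we want as many outputs as inputs, each contraction must consume precisely one $S^l$ variable; this forces the use of Lemma \ref{liftingonespecialvariablehigher} with $m=0$, i.e.\ taking $p-1$ variables of some colour $\nu$ in $\mathscr{K}$, $p-1$ of another colour $\nu'\neq\nu$, and one $S^l$ variable.

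To preserve the invariant $q(\mathscr{K}')\ge(p-1)(x-1)$ after removing these $2p-2$ variables, one of $\nu,\nu'$ should be chosen to be the majority colour $\nu_{\max}$ of $\mathscr{K}$, so that $q$ drops by at most $p-1$. The combinatorial heart of the argument is then to show that $\mathscr{K}$ always possesses two distinct colours each containing at least $p-1$ variables. The majority colour meets this lower bound by pigeonhole on at most $p+1$ colour classes, since $|\mathscr{K}|\ge p^2-p-1$ yields $I_{\nu_{\max}}\ge\lceil(p^2-p-1)/(p+1)\rceil=p-1$. If no non-majority colour had $p-1$ variables, every non-majority class would contain at most $p-2$ variables and hence $q(\mathscr{K})\le p(p-2)$; combined with $q(\mathscr{K})\ge(p-1)x$ this forces $x\le p-2$. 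Thus for $x\ge p-1$ the second qualifying colour always exists and the induction proceeds directly.

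The principal obstacle is the boundary regime $1\le x\le p-2$. Here one has to exploit the complementary estimate $I_{\nu_{\max}}\ge|\mathscr{K}|-p(p-2)\ge 2(p-1)x-p+1$, which shows that the majority class is very large, and split into subcases according to the colours of the $S^l$ variables. When some $S^l$ variable is itself of colour $\nu_{\max}$, Lemma \ref{onecolourfulandp-1differentvariablesofthesamecolour} applies with $p-1$ variables of $\nu_{\max}$ drawn from $\mathscr{K}$, and the removal then does not decrease $q$ at all. In the remaining configurations the contraction must be constructed more carefully, by choosing a subset of $\mathscr{K}$ whose weighted sum cancels the target vector $-d_{S^l}\boldsymbol{e}_\mu$ in $\F_p^2$ and using the abundance of the majority class as flexible filler to close the second coordinate. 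Verifying that such a contraction can always be arranged while keeping $q(\mathscr{K}')\ge(p-1)(x-1)$ at every inductive step is the most delicate portion of the proof, and the numerical bounds $|\mathscr{K}|\ge(2p-2)x+p^2-3p+1$ and $q(\mathscr{K})\ge(p-1)x$ appear tuned precisely for this small-$x$ analysis to succeed.
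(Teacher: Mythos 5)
Your overall strategy (induct on $x$, at each step feed one $S^l$ variable together with $2p-2$ variables of $\mathscr{K}$ into Lemma \ref{liftingonespecialvariablehigher} with $m=0$, and check the residual set still satisfies the hypotheses) is exactly the paper's plan, but your execution has a genuine gap rooted in a misreading of Lemma \ref{liftingonespecialvariablehigher}. That lemma does not require $p-1$ variables of a \emph{single} second colour $\nu'$: the notation $E_{\bar\nu}^l$ means a variable at level $l$ that is merely \emph{not} of colour $\nu$, so the $p-1$ companions may be spread over arbitrarily many colours. Hence the "principal obstacle" you describe for $1\le x\le p-2$ — the possible absence of a second colour class of size $p-1$ — is not an obstacle at all: $q(\mathscr{K})\ge (p-1)x\ge p-1$ already supplies $p-1$ variables outside the majority colour, and together with $I_{\mx}(\mathscr{K})\ge p-1$ (your pigeonhole bound, which the paper also uses via $|\mathscr{K}|\ge p^2-p-1=(p+1)(p-2)+1$) one always has the required configuration. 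Because you instead defer this regime to an unspecified "more careful" construction ("choosing a subset whose weighted sum cancels the target vector"), which you never carry out, the proof as proposed is incomplete precisely where you say it is delicate.

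There is a second, related weak point: your claim that removing $p-1$ majority-colour variables and $p-1$ others makes $q$ drop "by at most $p-1$" is not automatic. The drop equals $(2p-2)$ minus the decrease of $I_{\mx}$, so it lies between $p-1$ and $2p-2$, and it is $p-1$ only if the old majority colour remains the majority after removal. The paper handles this by writing $q(\mathscr{K})=x(p-1)+\beta$ and splitting: if $\beta\ge p-1$ even a drop of $2(p-1)$ keeps $q(\mathscr{K}\backslash\mathscr{H})\ge (x-1)(p-1)$, while if $\beta\le p-2$ the size hypothesis forces $I_{\mx}(\mathscr{K})\ge q(\mathscr{K})+p^2-5p+5$, so the majority colour stays maximal and the drop is exactly $p-1$. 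Some such argument is needed in your induction as well; with it, and with the correct reading of $E_{\bar\nu}^l$, your proof becomes essentially the paper's (the paper phrases it as first extracting $x$ disjoint sets $S_i$ with $|S_i|=2p-2$, $q(S_i)=p-1$ by the same induction, then applying Lemma \ref{liftingonespecialvariablehigher} once per set).
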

	\begin{proof}
        The first part of the proof will show via induction on $x$ that the set $\mathscr{K}$ contains $x$ distinct sets $S_i$ with $|S_i|=2p-2$ and $q\left(S_i\right)=p-1$ for all $1 \le i \le x$.
        
        For $x=0$ the statement is true. 
        It suffices to show for $x \ge 1$ that $\mathscr{K}$ contains a set $\mathscr{H}$ with $|\mathscr{H}| =2p-2$ and $q\left( \mathscr{H} \right)=p-1$ such that $|\mathscr{K}\backslash \mathscr{H}|\ge \left(x-1\right) \left(2p-2\right) +p^2-3p+1$ and $q \left( \mathscr{K}\backslash \mathscr{H} \right)\ge \left(x-1\right) \left(p-1\right)$. If such a set $\mathscr{H}$ exists, the induction hypothesis ensures that one can find further $x-1$ distinct sets in $\mathscr{K}\backslash \mathscr{H}$.
        
        Let $|\mathscr{K}| = x \left(2p-2\right) + p^2-3p+1 +\alpha$ and $q\left(\mathscr{K}\right) = x \left(p-1\right) + \beta$ with $\alpha, \beta \in \N_0$. As $x \ge 1$ it follows that $q\left(\mathscr{K}\right) \ge p-1$ and $|\mathscr{K}| \ge p^2-p-1 = \left(p+1\right) \left( p-2\right) +1$, hence, $I_\mx\left(\mathscr{K} \right)=I_\nu\left(\mathscr{K}\right) \ge p-1$ for some $0 \le \nu \le p$. Thus, one can take $\mathscr{H}$ as a set containing $p-1$ variables of type $E_\nu^l$ and $p-1$ variables of type $E_{\bar \nu}^l$ from which it follows that $|\mathscr{H}|=2p-2$, $q \left(\mathscr{H}\right)=p-1$ and
        \begin{align*}
            |\mathscr{K}\backslash \mathscr{H}| = |\mathscr{K}|- 2p +2 \ge \left(x -1\right) \left(2p-2\right) + p^2-3p+1.
        \end{align*}
        For $\beta \ge p-1$ one has the trivial bound
        \begin{align*}
            q\left(\mathscr{K}\backslash \mathscr{H}\right) \ge q \left(\mathscr{K}\right) -2 \left(p-1\right)=\left(x-1\right) \left(p-1\right)+ \beta - \left(p-1\right) \ge \left(x-1\right) \left(p-1\right),
        \end{align*}
        whereas for $\beta \le p-2$ it follows that
        \begin{align*}
            I_\mx\left(\mathscr{K} \right)&= |\mathscr{K}|-q\left(\mathscr{K}\right)= x \left(p-1\right) + \beta + \alpha + p^2-3p+1- 2 \beta \\
            &\ge q\left( \mathscr{K}\right) + p^2-5p+5  \ge q\left(\mathscr{K}\right)
        \end{align*}
        and thus
        \begin{align*}
            q\left(\mathscr{K}\backslash \mathscr{H}\right) = q\left(\mathscr{K}\right)-\left(p-1\right) \ge \left(x-1\right) \left(p-1\right).
        \end{align*}
        It follows, that the set $\mathscr{K}$ contains $x$ distinct sets $S_i$ with $|S_i|=2p-2$ and $q\left(S_i\right)=p-1$.
        
        For each set $S_i$ there is a $\nu_i$ such that $I_\mx\left(S_i\right) = I_{\nu_i}\left(S_i\right)=p-1$. For $i \in \{1, \dots, x\}$ take the set $S_i$ and one variable of type $S^l$, which gives $p-1$ variables of type $E_{\nu_i}^l$, $p-1$ variables of type $E_{\bar{\nu_i}}^l$ and one $S^l$ variable. Such a set contains a contraction to an $S^{l+1}$ variable due to Lemma \ref{liftingonespecialvariablehigher}. Thus, one obtains $x$ variables of type $S^{l+1}$.
	\end{proof}
    \begin{Lemma}
		\label{x-y-Lemmaneu}
		Let $S \in \{C, P\}$ and $x$, $y$ and $z$ be non-negative integers with $y+z \ge \left(2-m\right) p-2$ for some $m \in \{0,1,2\}$ and $x-m \ge 0$. Let there be $\left(2p-2\right)y+p^2-3p+1$ variables of type $E^l$ from which at least $\left(p-1\right)y$ variables are of type $E_{\bar \nu}^l$ for any $0\le \nu \le p$. Furthermore, let there be $px+y+z$ variables of type $S^l$. Then one can contract them to $x+y-m$ variables of type $S^{l+1}$ without using $z+mp$ of the variables of type $S^l$.
	\end{Lemma}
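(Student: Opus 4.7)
The plan is to mimic the proof of Lemma \ref{x-y-Lemma}, but to replace the final use of Lemma \ref{liftingonespecialvariablehigher} on prescribed colour slices by the more flexible Lemma \ref{neu}. I would first invoke Lemma \ref{FS1} repeatedly to contract $p$ of the $S^l$ variables at a time into a single $S^{l+1}$ variable, carrying out this step exactly $x-m$ times. Then I would apply Lemma \ref{neu} to the pool of $E^l$ variables together with $y$ of the remaining $S^l$ variables, producing $y$ further variables of type $S^{l+1}$. In total this yields $x+y-m$ variables of type $S^{l+1}$ while leaving $z+mp$ of the $S^l$ variables untouched, matching the claim.

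For the repeated application of Lemma \ref{FS1} to be valid, after $x-m-1$ successful contractions there must still remain at least $3p-2$ variables of type $S^l$. Starting from $px+y+z$ variables and removing $p$ at each step, the count just before the $(x-m)$-th application is $(m+1)p + y + z$. The hypothesis $y+z \ge (2-m)p-2$ is exactly what is needed to conclude $(m+1)p + y + z \ge 3p-2$, so Lemma \ref{FS1} is applicable at each of the $x-m$ steps, and after them precisely $y + z + mp$ of the $S^l$ variables remain.

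For the second step, Lemma \ref{neu} with parameter $y$ requires a set $\mathscr{K}$ of $E^l$ variables with $|\mathscr{K}| \ge (2p-2)y + p^2-3p+1$ and $q(\mathscr{K}) \ge (p-1)y$, plus $y$ additional variables of type $S^l$. The size hypothesis on $\mathscr{K}$ holds by the assumption of the lemma. To verify the bound on $q$, let $\nu^\ast$ be a colour at which $I_\mx(\mathscr{K})$ is attained; by hypothesis at least $(p-1)y$ variables of $\mathscr{K}$ are of type $E_{\bar{\nu^\ast}}^l$, so $q(\mathscr{K}) = |\mathscr{K}| - I_{\nu^\ast}(\mathscr{K}) \ge (p-1)y$. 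Since $y \le y + z + mp - z - mp$ remaining $S^l$ variables are still available after step one, Lemma \ref{neu} produces the desired $y$ variables of type $S^{l+1}$, while the remaining $z+mp$ variables of type $S^l$ are left untouched as required.

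The main obstacle is really just bookkeeping: making sure the counts match the hypotheses of Lemma \ref{neu} exactly and that enough $S^l$ variables remain at each iteration of Lemma \ref{FS1}. Once these numerical checks are aligned, the two applications combine immediately to give the conclusion.
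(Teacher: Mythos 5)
Your proposal is correct and follows essentially the same route as the paper: apply Lemma \ref{FS1} exactly $x-m$ times to the $S^l$ variables, then feed $y$ of the remaining $S^l$ variables together with the $E^l$ pool into Lemma \ref{neu}. The only difference is that you spell out the bookkeeping (the $3p-2$ threshold at each step and the verification $q\left(\mathscr{K}\right)\ge\left(p-1\right)y$) that the paper leaves implicit.
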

	\begin{proof}
		Using Lemma \ref{FS1} to contract $p$ of the variables of type $S^l$ to an $S^{l+1}$ variable can be done $x-m$ times. This leaves $y+z+mp\ge 2p-2$ variables of type $S^{l}$. One can contract $y$ of them together with the variables of type $E^l$ to $y$ variables of type $S^{l+1}$ due to Lemma \ref{neu}. This gives a total of $x+y-m$ variables of type $S^{l+1}$ as claimed, without using $z+mp$ variables of type $S^l$.
	\end{proof}
	\begin{Lemma}
	\label{moreinm_1-I_0^0}
        Let $x$ be a non-negative integer. Let there be at least $px+p^2-3p+3$ variables of type $E_\nu^l$ from which at least $x$ are of type $E_{\nu \mu}^l$ for some $\mu$ and at least $x$ are of type $E_{\nu \bar \mu}^l$. Then one can contract $px$ of them to $x$ variables of type $E_{\bar \nu}^{l+1}$.
	\end{Lemma}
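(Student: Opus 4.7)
The plan is to induct on $x$. The base case $x=0$ is vacuous. For the inductive step, I aim to select $p$ of the $E_\nu^l$ variables whose indices form a set satisfying the hypothesis of Lemma \ref{createavariableinm_1-I_0^1} (that is, $p-1$ sharing one nuance $(\nu,\mu_1)$ together with a single one of a different nuance $(\nu,\mu_2)$); applying that lemma produces one $E_{\bar\nu}^{l+1}$ variable using $p$ of the available variables. I then need to ensure that the residual pool of at least $p(x-1) + p^2 - 3p + 3$ variables of type $E_\nu^l$ still contains at least $x-1$ of nuance $(\nu,\mu)$ and at least $x-1$ of type $E_{\nu \bar\mu}^l$, so that the induction hypothesis applies to produce the remaining $x-1$ contractions.

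Writing $a$ and $b$ for the numbers of available variables of nuance $(\nu,\mu)$ and of type $E_{\nu \bar \mu}^l$ respectively, I have $a \ge x$, $b \ge x$, and $a + b \ge px + p^2 - 3p + 3$. I would split into two cases. If $a \ge x + p - 1$, I would take $p - 1$ variables of nuance $(\nu,\mu)$ together with one variable of any other nuance, leaving $a' \ge x$ and $b' \ge x - 1$. Otherwise $a \le x + p - 2$, which together with the total count yields $b \ge (p-1)x + p^2 - 4p + 5 \ge p^2 - 3p + 4 > (p-1)(p-2)$ for $x \ge 1$; since the $b$ variables are partitioned among the $p-1$ nuances different from $(\nu,\mu)$, the pigeonhole principle produces some nuance $(\nu,\mu_1)$ with $\mu_1 \neq \mu$ containing at least $p-1$ variables, and I would take these along with one variable of nuance $(\nu,\mu)$, which exists because $a \ge x \ge 1$. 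This leaves $a' = a - 1 \ge x - 1$ and $b' \ge x - 1$.

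The only routine verification is the pigeonhole bound $\lceil b/(p-1)\rceil \ge p - 1$ in the second case, which is precisely where the constant $p^2 - 3p + 3$ in the hypothesis is calibrated and where the assumption $p \ge 5$ makes the ceiling cooperate; no other step presents genuine difficulty, since Lemma \ref{createavariableinm_1-I_0^1} performs all the actual contracting. The main obstacle is therefore purely combinatorial bookkeeping: arranging the case split so that both lower bounds $a \ge x$ and $b \ge x$ survive each application.
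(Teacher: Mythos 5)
Your proposal is correct and follows essentially the same route as the paper: both proofs repeatedly apply Lemma \ref{createavariableinm_1-I_0^1}, using a pigeonhole count to locate $p-1$ variables of a common nuance and pairing them with one variable of a different nuance, while tracking that at least $x-1$ variables of nuance $(\nu,\mu)$ and $x-1$ of type $E_{\nu\bar\mu}^l$ survive each step (the paper organises this as a three-set process with a replenished reserve rather than your two-case induction, but that is only a bookkeeping difference). Note only that your pigeonhole estimate needs no appeal to $p\ge 5$; the bound $b\ge(p-1)(p-2)+2$ already forces some nuance $\mu_1\neq\mu$ to contain at least $p-1$ variables for every prime $p\ge 3$.
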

	\begin{proof}
        Divide the $E_\nu^l$ variables in three sets. One contains $x$ variables of type $E_{\nu \mu}^l$, the next one contains $x$ variables of type $E_{\nu \bar \mu}^l$ and the last one contains the remaining variables.
        
        The statement is trivial for $x=0$, thus one can assume that $x \ge 1$.
        Assume now, that the last set contains $z\ge \left( p-2\right) p +1= p^2-2p+1$ variables, and the first two both contain $y \ge 1$ variables. Then there is an $\eta$ such that the last set contains at least $p-1$ variables of type $E_{\nu \eta}^l$ and one can choose one variable in one of the first two sets, which is of type $E_{\nu \bar \eta}^l$. These $p$ variables contract to an $E_{\bar \nu}^{l+1}$ variable due to Lemma \ref{createavariableinm_1-I_0^1}. Then, one can take one variable in the untouched set and put it in the last set, such that the first two sets both contain $y-1$ variables and the last one contains $z-p+2$ variables.
        
        Starting with $z\ge \left(p-2\right)x+p^2-3p+3$ and $y=x$, after following this process $x-1$ times, one still has at least $p^2-2p+1$ variables in the last set left, while the other two each contain one variable. It follows, that one can contract one more variable of type $E_{\bar \nu}^{l+1}$ as described above, giving a total of $x$ variables of type $E_{\bar \nu}^{l+1}$.
	\end{proof}
    \subsection{Inductive Contractions}
    This subsection uses induction to contract sets of variables at some level to variables more than one level higher.
	\begin{Lemma}
		\label{fromitojwithout2p}
		Let $S \in \{C, P\}$ and $i, j \in \N_0$ with $i \le j \le \tau$ as well as $m\in \Z$ with $m \ge -1$.
		Let there be $p^{\tau-i+1}+mp^{\tau-i}-2$ variables of type $S^i$. Then one can contract them to $p^{\tau-j+1}+ m p^{\tau-j}-2$ variables of type $S^j$ and at least $2p-2$ variables of type $S^l$ for all $l \in \{i, \dots, j-1\}$.
	\end{Lemma}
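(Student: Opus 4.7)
The plan is to prove this by induction on $j-i$, at each stage invoking Lemma \ref{FS2} to climb one level. The base case $j=i$ is immediate: the starting set of $p^{\tau-i+1}+mp^{\tau-i}-2$ variables of type $S^i$ already fulfils the conclusion, and the index set $\{i,\dots,j-1\}$ is empty so no unused reserves need to be produced. For the inductive step, assume the statement holds for $(i,j-1)$ with $j\le\tau$. Applying the hypothesis to $(i,j-1)$ yields $p^{\tau-j+2}+mp^{\tau-j+1}-2$ variables of type $S^{j-1}$, together with $2p-2$ unused variables of type $S^l$ for each $l\in\{i,\dots,j-2\}$. It remains to feed the $S^{j-1}$ pool into Lemma \ref{FS2} one more time.

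The key arithmetic check is that Lemma \ref{FS2} exactly preserves the normal form $p^{\tau-l+1}+mp^{\tau-l}-2$ as $l$ increments. Setting $x=p^{\tau-j+2}+mp^{\tau-j+1}-2$, one computes
\begin{align*}
\frac{x+3}{p}=p^{\tau-j+1}+mp^{\tau-j}+\frac{1}{p},
\end{align*}
where the hypothesis $j\le\tau$ is used precisely to guarantee that $mp^{\tau-j}$ is an integer, so that the fractional part is exactly $1/p$. Consequently $\lceil (x+3)/p\rceil=p^{\tau-j+1}+mp^{\tau-j}+1$, and Lemma \ref{FS2} delivers $\lceil (x+3)/p\rceil-3=p^{\tau-j+1}+mp^{\tau-j}-2$ variables of type $S^j$, which is exactly what is required at level $j$.

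Finally one must check the bookkeeping for the unused $S^{j-1}$ variables. Lemma \ref{FS2} leaves $\min\{2p-2,x\}$ elements of the input unused; since $m\ge-1$ and $j\le\tau$, one has the crude lower bound
\begin{align*}
x\ge p^{\tau-j+2}-p^{\tau-j+1}-2\ge p^2-p-2\ge 2p-2
\end{align*}
for $p\ge 5$, so indeed $2p-2$ variables of type $S^{j-1}$ remain untouched. Together with the reserves produced at the earlier levels by the inductive hypothesis, this gives $2p-2$ unused $S^l$ variables for every $l\in\{i,\dots,j-1\}$, completing the induction. I expect no real obstacle here beyond the arithmetic identity, which is engineered so that the exponent shifts cleanly by one at each application of Lemma \ref{FS2}.
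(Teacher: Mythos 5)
Your proposal is correct and follows essentially the same route as the paper: an induction in which each step applies Lemma \ref{FS2} to a pool of size $p^{\tau-l+1}+mp^{\tau-l}-2$, uses the identity $\left\lceil \frac{p^{\tau-l+1}+mp^{\tau-l}+1}{p}\right\rceil-3=p^{\tau-l}+mp^{\tau-l-1}-2$, and notes that the leftover $\min\{2p-2,x\}$ variables give the required reserves since $x\ge p^2-p-2\ge 2p-2$. The only cosmetic difference is that you induct on $j-i$ from the top while the paper inducts level by level from $i$ upward; the content is identical.
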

	\begin{proof}
        For $i=j$ the statement is trivial, thus, the cases $i<j\le \tau$ remain.
		Assume for an $l\in \{i, \dots, j-1\}$ that there are $p^{\tau-l+1} + mp^{\tau-l}-2$ variables of type $S^l$ and $2p-2$ variables of type $S^n$ for all $n \in \{i, \dots, l-1\}$. Lemma \ref{FS2} shows that these variables can be contracted to
		\begin{align*}
		\left \lceil \frac{p^{\tau-l+1}+mp^{\tau-l} + 1}{p} \right \rceil -3 = p^{\tau-l} + m p^{\tau-l-1} -2
		\end{align*}
		variables of type $S^{l+1}$. This leaves at least $2p-2$ variables of type $S^l$ unused. The claim follows via induction.
	\end{proof}
	\begin{Lemma}
		\label{fromitoj}
		Let $S \in \{C, P\}$ and $i, j\in \N_0$ with $i \le j \le \tau$ as well as $m \in \Z$ with $m \ge -1$.
		Let there be $p^{\tau-i+1}+mp^{\tau-i}$ variables of type $S^i$ and for all $l \in \{i, \dots, j-1\}$ let there be an $\nu_l$ and $ 2p-2$ variables of type $E_{\nu_l}^l$. Then one can contract them to $p^{\tau-j+1}+ m p^{\tau-j}$ variables of type $S^j$.
	\end{Lemma}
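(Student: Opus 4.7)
The plan is to induct on $l$ from $l = i$ up to $l = j$, paralleling the proof of Lemma \ref{fromitojwithout2p} but exploiting the extra $2p-2$ variables of type $E_{\nu_l}^l$ at each intermediate level to gain two additional variables of type $S^{l+1}$ per step. The base case $l = i$ is trivial, and the inductive assumption at level $l \in \{i, \ldots, j-1\}$ is that I have accumulated $p^{\tau-l+1} + mp^{\tau-l}$ variables of type $S^l$ along with the $2p-2$ untouched variables of type $E_{\nu_l}^l$. Writing $N := p^{\tau-l} + mp^{\tau-l-1}$, so that the $S^l$ variables number $pN$, the task at this level is to contract these into $N$ variables of type $S^{l+1}$; the inductive hypothesis at level $l+1$ (with the same $m$) will then carry us up to level $j$.

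For this key step I split the work into two phases. In Phase 1 I apply Lemma \ref{FS1} exactly $N-2$ times, each time consuming at most $p$ variables of type $S^l$ to produce one variable of type $S^{l+1}$. Before the $r$-th application ($1 \le r \le N-2$) at least $pN - (r-1)p = p(N-r+1) \ge 3p$ variables of type $S^l$ remain, comfortably exceeding the threshold $3p-2$ demanded by Lemma \ref{FS1}; the bound $N \ge p-1 \ge 4$, valid because $l \le \tau - 1$ and $m \ge -1$, ensures $N-2 \ge 2$. After Phase 1 there are $N-2$ variables of type $S^{l+1}$, at least $2p$ variables of type $S^l$ still unused, and all $2p-2$ helpers $E_{\nu_l}^l$ untouched. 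Phase 2 then applies Lemma \ref{2p-1} with the parameter $m = 0$ twice, each application consuming $p$ variables of type $S^l$ together with $p-1$ variables of type $E_{\nu_l}^l$ to manufacture one further variable of type $S^{l+1}$. The two applications use exactly $2p$ variables of type $S^l$ and all $2p-2$ of the $E_{\nu_l}^l$ variables, producing the last two $S^{l+1}$ variables and thus the total of $N$.

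The principal obstacle is matching the counts at the boundary between the two phases: Phase 1 must stop precisely two iterations short of its naive maximum so that at least $2p$ variables of type $S^l$ survive, and the $E_{\nu_l}^l$ helpers must appear in the exact quantity $2p-2$ so that Phase 2 consumes them with no waste and no shortfall. Both conditions are met by the hypotheses of the lemma, and every other step is a direct invocation of a previously established result, so once the counting is pinned down the inductive step is mechanical.
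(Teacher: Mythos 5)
Your proposal is correct and follows essentially the same route as the paper: induct on the level, produce $p^{\tau-l}+mp^{\tau-l-1}-2$ variables of type $S^{l+1}$ by plain contractions of $S^l$ variables, and then use Lemma \ref{2p-1} twice with the $2p-2$ helpers of type $E_{\nu_l}^l$ to gain the final two. The only cosmetic difference is that you iterate Lemma \ref{FS1} by hand where the paper invokes Lemma \ref{FS2}, which is precisely that iteration packaged with the same counting.
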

	\begin{proof}
	For $i=j$ the statement is trivial, thus, the cases $i<j\le \tau $ remain.
		Assume for an $l\in \{i, \dots, j-1\}$ there are $p^{\tau-l+1} + mp^{\tau-l}$ variables of type $S^l$ and $2p-2$ variables of type $E_{\nu_l}^l$. Lemma \ref{FS2} shows that there exist
		\begin{align*}
			\left \lceil \frac{p^{\tau-l+1}+mp^{\tau-l} + 3}{p} \right \rceil -3 = p^{\tau-l} + m p^{\tau-l-1} -2
		\end{align*}
		contractions to variables $S^{l+1}$, each of them containing at most $p$ variables. Therefore, there are even $2p$ variables of type $S^{l}$ remaining. Together with the $2p-2$ variables of type $E_{\nu_l}^l$, they can be contracted to another two $S^{l+1}$ variables, using Lemma \ref{2p-1} twice. This gives a total of $p^{\tau-l}+mp^{\tau-l-1}$ variables of type $S^{l+1}$. The claim follows via induction.
	\end{proof}
	\begin{Lemma}
		\label{hinterlassen}
		Let $m \le p-1$ be an integer and let there be a $j \in \{ 0, 1, \dots, \tau-1\}$ such that there are 
		\begin{align*} 
			2 p^{\tau-j+1} +\left(4-2m \right) p^{\tau-j} - \frac{p-1}{2} \sum_{i=1}^{\tau-j-1}p^i + \left( 2m-1\right) p^{\tau-j-1} + 3 \sum_{i=0}^{\tau-j-2} p^i -2p-2,
		\end{align*}
		variables of type $E_\nu^j$. Then one can contract them to $p-m-1$ variables of type $E_\nu^{\tau}$ and $2p-2$ variables of type $E_\nu^i$ for all $i \in \{j, j+1 , \dots, \tau -1\}$.
	\end{Lemma}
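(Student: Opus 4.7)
I plan to prove the statement by induction on $T = \tau - j \ge 1$. Let $N(T)$ denote the lower bound on the number of $E_\nu^j$ variables stated in the lemma, so the claim is: starting from $N(T)$ variables of type $E_\nu^j$, one can contract them to $p-m-1$ variables of type $E_\nu^{\tau}$ together with $2p-2$ variables of type $E_\nu^i$ for every $i \in \{j, j+1, \ldots, \tau-1\}$. The base case $T = 1$ will be handled by iterating Lemma \ref{FS5}, and the inductive step $T \ge 2$ by a single application of Lemma \ref{FS6}.

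For the base case $T = 1$ one has $j = \tau - 1$ and $N(1) = 2p^2 + (2-2m)p + 2m - 3$. I would apply Lemma \ref{FS5} exactly $p-m-1$ times. Each application consumes at most $2p-2$ variables of type $E_\nu^{\tau-1}$ and produces one variable of type $E_\nu^{\tau}$, provided at least $4p-3$ of the remaining $E_\nu^{\tau-1}$ variables have their corresponding integer in one fixed half-range of $\{1,\ldots,p-1\}$ modulo $p$. Since the two half-ranges $\{1,\ldots,(p-1)/2\}$ and $\{(p+1)/2,\ldots,p-1\}$ partition these residues, pigeonhole shows that a set of $n$ colour-$\nu$ variables always contains at least $\lceil n/2\rceil$ of them in one fixed half-range, so each application succeeds as long as at least $8p-7$ variables remain. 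Before the $(p-m-1)$-st application the number of remaining variables is exactly $N(1) - (p-m-2)(2p-2) = 8p-7$, and after that application $N(1) - (p-m-1)(2p-2) = 6p-5 \ge 2p-2$ variables of type $E_\nu^{\tau-1}$ remain; this settles the base case.

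For the inductive step $T \ge 2$, assume the conclusion for $T-1$. A direct telescoping of the geometric sums in the definition of $N(T)$ yields the clean recurrence $N(T) = pN(T-1) + (3p^2+p+2)/2$, which for $T \ge 2$ makes $N(T)$ well above the threshold $3p^2 - 3p + 1$ (respectively $2p^2 - 2p + 1$ when $p=5$) required by Lemma \ref{FS6}. Writing $N(T)/p = N(T-1) + (3p+1)/2 + 1/p$ and using that $(3p+1)/2$ is an integer for odd $p$, one obtains $\lceil N(T)/p\rceil = N(T-1) + (3p+3)/2$, so Lemma \ref{FS6} produces at least $N(T-1) + (3p+3)/2 - 2p + (p-3)/2 = N(T-1)$ variables of type $E_\nu^{j+1}$ while leaving at least $6p-9 \ge 2p-2$ variables of type $E_\nu^j$ unused. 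Applying the induction hypothesis at level $j+1$ to these $N(T-1)$ variables yields the required $p-m-1$ variables at level $\tau$ and $2p-2$ variables at each intermediate level, completing the step.

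The main obstacle is arithmetic rather than conceptual: one must verify that the closed-form expression for $N(T)$ in the statement genuinely satisfies both the recurrence $N(T) = pN(T-1) + (3p^2+p+2)/2$ and the saturated base identity $N(1) = (p-m-2)(2p-2) + 8p - 7$. Both reductions are straightforward once the sums $\sum_{i=1}^{T-1} p^i$ and $\sum_{i=0}^{T-2} p^i$ are separated and recollected, and the constants $2p^{\tau-j+1}$, $(4-2m)p^{\tau-j}$, $-\frac{p-1}{2}\sum p^i$, $(2m-1)p^{\tau-j-1}$, and $3\sum p^i$ are evidently calibrated precisely so that Lemma \ref{FS5} saturates at $T = 1$ and Lemma \ref{FS6} saturates at every inductive reduction.
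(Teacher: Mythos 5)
Your proposal is correct and takes essentially the same route as the paper: your inductive step is exactly the paper's repeated application of Lemma \ref{FS6} (your recurrence $N(T)=pN(T-1)+\tfrac{3p^2+p+2}{2}$ is just the paper's level-by-level bookkeeping in closed form), and your base case at level $\tau-1$ merely inlines the half-range pigeonhole argument of Lemma \ref{FS6klein}, which the paper cites to get $\lceil N(1)/(2p-2)\rceil-4=p-m-1$ variables of type $E_\nu^{\tau}$ while leaving at least $6p-9\ge 2p-2$ unused. The only cosmetic slip is the word ``exactly'' before the last application of Lemma \ref{FS5}: since each application consumes at most $2p-2$ variables the remaining count is at least $8p-7$, which is all the argument needs.
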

	\begin{proof}
        If $j \le \tau-2$, assume that for some $l \in \{j, j+1. \dots, \tau-2\}$ one can contract the variables to $ 2 p^{\tau-l+1}+\left(4-2m \right) p^{\tau-l} - \frac{p-1}{2} \sum_{i=1}^{\tau-l-1}p^i + \left( 2m-1\right) p^{\tau-l-1} + 3 \sum_{i=0}^{\tau-l-2} p^i -2p-2$ variables of type $E_\nu^l$ and $2p-2$ variables of type $E_\nu^i$ for all $i \in \{ j, j+1 , \dots, l-1\}$. Using Lemma \ref{FS6}, the variables of type $E_\nu^l$ can be contracted to
		\begin{align*}
		&\left \lceil \frac{2 p^{\tau-l+1}+\left(4-2m \right)  p^{\tau-l} - \frac{p-1}{2} \sum_{i=1}^{\tau-l-1}p^i + \left( 2m-1\right) p^{\tau-l-1} + 3 \sum_{i=1}^{\tau-l-2} p^i -2p+1}{p} \right \rceil -2p + \frac{p-3}{2} \\
		&= 2 p^{\tau-l} +\left(4-2m \right)  p^{\tau-l-1} - \frac{p-1}{2} \sum_{i=0}^{\tau-l-2}p^i + \left( 2m-1\right) p^{\tau-l-2} + 3 \sum_{i=0}^{\tau-l-3} p^i -1 -2p +\frac{p-3}{2}\\
		&=2 p^{\tau-\left(l+1\right) +1 }+\left(4-2m \right)  p^{\tau-\left(l +1 \right)} - \frac{p-1}{2} \sum_{i=1}^{\tau-\left(l+1\right)-1}p^i + \left( 2m-1\right) p^{\tau-\left(l+1 \right)-1} + 3 \sum_{i=0}^{\tau-\left(l+1\right)-2} p^i -2p-2
		\end{align*}
		variables of type $E_\nu^{l+1}$, while leaving at least $6p-9 \ge 2p-2$ variables of type $E_\nu^l$ unused. Hence, by induction, one can contract the $E_\nu^j$ variables to 
		\begin{align*} 
			2 p^{\tau-\left(\tau-1\right)+1} +\left(4-2m \right)  p^{\tau-\left(\tau-1\right)} - \frac{p-1}{2} \sum_{i=1}^{\tau-\left(\tau-1\right)-1}p^i + \left( 2m-1\right) p^{\tau-\left(\tau-1\right)-1} + 3 			\sum_{i=0}^{\tau-\left(\tau-1\right)-2} p^i -2p-2 \\
			= 2p^2+\left(2-2m \right)  p +2m -3
		\end{align*}
		variables of type $E_\nu^{\tau-1}$ and $2p-2$ variables of type $E_\nu^i$ for all $i \in \{j, j+1, \dots, \tau-2\}$. This reduced the cases $j \le \tau -2$ to the case $j =\tau-1$.
		For $j=\tau-1$, one can contract the variables of type $E_\nu^{\tau-1}$ to
		\begin{align*}
			\left \lceil \frac{ 2p^2 +\left(2-2m \right)  p +2m -3 }{2p-2}\right \rceil  - 4 = p -m-1
		\end{align*}
		variables of type $E_\nu^\tau$ with Lemma \ref{FS6klein}, while leaving at least $6p-9\ge2p-2$ variables of type $E_\nu^{\tau-1}$. This proves the claim.
    \end{proof}
	\begin{Lemma}
		\label{hinterlassen5}
		Let $p=5$ and $m \le p-1$ be an integer. Let there be
		\begin{align*}
			3 p^{\tau-j+1} -m p^{\tau-j} -3 p^{\tau-j} - \sum_{i=0}^{\tau-j-1} p^i -2p +2,
		\end{align*}
		variables of type $E_\nu^j$ for some $j \in \{ 0, 1, \dots, \tau\}$. Then one can contract them to $p-m-1$ variables of type $E_\nu^\tau$ and $2p-2$ variables of type $E_\nu^i$ for all $i \in \{j, j+1, \dots, \tau-1\}$.
	\end{Lemma}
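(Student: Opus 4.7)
The plan is to induct downward on $j$, from $j=\tau$ to $j=0$, in parallel with the proof of Lemma \ref{hinterlassen}, but invoking the $p=5$ clause of Lemma \ref{FS6}, which contracts $x\ge 2p^2-2p+1=41$ variables of type $E_\nu^l$ into $\lceil x/p\rceil-2p+3$ variables of type $E_\nu^{l+1}$ while leaving at least $6p-9$ unused. Write
\begin{align*}
N(j)=3p^{\tau-j+1}-(m+3)p^{\tau-j}-\sum_{i=0}^{\tau-j-1}p^i-2p+2
\end{align*}
for the number of variables supplied.

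The base case $j=\tau$ is immediate, since the empty sum vanishes and $N(\tau)=p-m-1$, so the given variables themselves are the required $E_\nu^\tau$ output and there are no intermediate levels to populate. For the inductive step I fix $j\le\tau-1$ and suppose the lemma holds at $j+1$. Using $\sum_{i=0}^{\tau-j-1}p^i=1+p\sum_{i=0}^{\tau-j-2}p^i$ one checks $N(j)\equiv 1\bmod p$ and then $(N(j)-1)/p=N(j+1)+2p-4$, so $\lceil N(j)/p\rceil-2p+3=N(j+1)$. If $N(j)\ge 41$ then Lemma \ref{FS6} produces $N(j+1)$ variables of type $E_\nu^{j+1}$ and leaves at least $6p-9\ge 2p-2$ of type $E_\nu^j$ unused; the inductive hypothesis converts the former into the desired $p-m-1$ variables at level $\tau$ together with $2p-2$ at each level $j+1,\dots,\tau-1$, and the leftovers at level $j$ complete the step.

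Since $N(j)$ increases rapidly as $j$ decreases, the hypothesis $N(j)\ge 41$ can fail only at $j=\tau-1$, where $N(\tau-1)=3p^2-(m+5)p+1=51-5m$; this drops below $41$ exactly when $m\in\{3,4\}$. These two boundary cases I would dispatch directly. For $m=4$ the claim is vacuous, since $p-m-1=0$ and the $N(\tau-1)=31$ variables of type $E_\nu^{\tau-1}$ trivially exceed the required leftover $2p-2=8$. For $m=3$, applying Lemma \ref{FS6klein} to the $36$ variables yields $\lceil 36/(2p-2)\rceil-4=1=p-m-1$ variables of type $E_\nu^\tau$ while leaving $6p-9$ of type $E_\nu^{\tau-1}$ unused. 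The main obstacle will be verifying the arithmetic identity $\lceil N(j)/p\rceil-2p+3=N(j+1)$ cleanly and correctly dispatching these two low-$N$ boundary cases at the top of the induction; the rest is a routine descent.
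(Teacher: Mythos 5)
Your proposal is correct and follows essentially the same route as the paper: repeated application of the $p=5$ clause of Lemma \ref{FS6} level by level (your identity $\lceil N(j)/p\rceil-2p+3=N(j+1)$ checks out), with the final step to level $\tau$ handled by the same case split --- $m\le 2$ via Lemma \ref{FS6}, $m=4$ trivially since $p-m-1=0$, and $m=3$ via Lemma \ref{FS6klein}. The only difference is cosmetic: you organise it as a downward induction on $j$ with base case $j=\tau$, whereas the paper iterates upward from level $j$ to $\tau-1$ and then treats the last contraction separately.
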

	\begin{proof}
        For $j=\tau$ the claim is trivial, thus, one can assume that $j \in \{0,1,\dots,\tau-1\}$. 
        
        For $j \le \tau-2$, assume that for some $l \in \{j, j+1. \dots, \tau-2\}$ one can contract the variables to $ 3 p^{\tau-l+1} -m p^{\tau-l} -3 p^{\tau-l} - \sum_{i=0}^{\tau-l-1} p^i -2p +2$ variables of type $E_\nu^l$ and $2p-2$ variables of type $E_\nu^i$ for all $i \in \{ j, j+1 , \dots, l-1\}$. Using Lemma \ref{FS6} for $p=5$, the variables of type $E_\nu^l$ can be contracted to
		 \begin{align*}
		 	&\left \lceil \frac{3 p^{\tau-l+1} -m p^{\tau-l} -3 p^{\tau-l} - \sum_{i=0}^{\tau-l-1} p^i -2p +2}{p} \right \rceil -2p + 3\\
			&=3 p^{\tau-l} -m p^{\tau-l-1} -3 p^{\tau-l-1} - \sum_{i=0}^{\tau-l-2} p^i -2 +1-2p+3\\
			&=3 p^{\tau-\left(l+1\right)+1} -m p^{\tau-\left(l+1\right)} -3 p^{\tau-\left(l+1\right)} - \sum_{i=0}^{\tau-\left(l+1\right)-1} p^i -2p +2
		 \end{align*}
		 variables of type $E_{\nu}^{l+1}$, while leaving at least $6p-9 \ge 2p-2$ variables of type $E_\nu^l$ unused. By induction, it follows that one can contract
		 \begin{align*}
		 3 p^{\tau-\left(\tau-1\right)+1} -m p^{\tau-\left(\tau-1\right)} -3 p^{\tau-\left(\tau-1\right)} - \sum_{i=0}^{\tau-\left(\tau-1\right)-1} p^i -2p +2=3p^2-mp-5p+1
		 \end{align*}
		 variables of type $E_\nu^{\tau-1}$ and $2p-2$ variables of type $E_\nu^i$ for all $i \in \{j, j+1, \dots, \tau-2\}$. This reduced the cases $j \le \tau-2$ to the case $j=\tau-1$.
		 
		  For $j =\tau-1$ one has $3p^2-mp-5p+1$ variables of type $E_\nu^j$. This is at least as big as $2p^2-2p+1$ for $m \le 2$. Thus, one can use Lemma \ref{FS6} for $p=5$ to contract them to
        \begin{align*}
		 	\left \lceil \frac{3p^2-mp-5p+1}{p} \right \rceil -2p + 3=3p-m-5+1-2p+3=p-m-1
        \end{align*}
        variables of type $E_\nu^{\tau}$ while leaving at least $2p-2$ variables of type $E_\nu^{\tau-1}$ unused. For $m=4$ the claim follows because $p-4-1=0$, which leaves $3p^2-4p-5p+1=6p+1\ge 2p-2$ variables of type $E_\nu^{\tau-1}$. In the remaining case $m=3$, one obtains
        \begin{align*}
		 	\left \lceil \frac{3p^2-3p-5p+1}{2p-2} \right \rceil -4=1=p-3-1=p-m-1
        \end{align*}
        variables of type $E_\nu^\tau$ with Lemma \ref{FS6klein} while leaving at least $6p-9\ge 2p-2$ variables of type $E_\nu^{\tau-1}$ unused.
	\end{proof}
    \begin{Lemma}
		\label{2ptotau-1}
		Let there be $4 p^{\tau-j} - \frac{p-1}{2} \sum_{i=1}^{\tau-j-1} p^i+ 3 \sum_{i=0}^{\tau-j-2} p^i -2p -2$ variables of type $E_\nu^j$ for some $j \in \{ 0, 1, \dots, \tau-1\}$. Then one can contract $2p-2$ variables of type $E_\nu^i$ for  all $i \in \{j, \dots, \tau-1\}$, simultaneously.
	\end{Lemma}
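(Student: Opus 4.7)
My plan is to proceed by induction on $\tau - j$, following the same contraction pattern as in Lemma \ref{hinterlassen} but dropping the final step that produces an $E_\nu^\tau$ variable. Write $M_l := 4p^{\tau-l} - \frac{p-1}{2}\sum_{i=1}^{\tau-l-1}p^i + 3\sum_{i=0}^{\tau-l-2}p^i - 2p - 2$, so that by hypothesis there are $M_j$ variables of type $E_\nu^j$. The base case $j = \tau - 1$ is immediate, since $M_{\tau-1} = 4p - 2p - 2 = 2p - 2$, and the variables given by the hypothesis already serve as the required $2p - 2$ variables of type $E_\nu^{\tau-1}$.

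For the inductive step with $j \le \tau - 2$, I would first check that $M_j \ge M_{\tau-2} = (7p^2 - 3p + 2)/2$, and that $(7p^2 - 3p + 2)/2 \ge 3p^2 - 3p + 1$ reduces to $p^2 + 3p \ge 0$, which clearly holds. Hence Lemma \ref{FS6} applies and produces $\lceil M_j/p \rceil - 2p + (p-3)/2$ variables of type $E_\nu^{j+1}$ while leaving at least $6p-9 \ge 2p-2$ variables of type $E_\nu^j$ unused. I would reserve $2p-2$ of these unused variables as the required $E_\nu^j$ variables and then invoke the inductive hypothesis at level $j+1$ to obtain $2p-2$ variables of type $E_\nu^i$ for all $i \in \{j+1, \dots, \tau - 1\}$.

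The key arithmetic verification is $\lceil M_j/p \rceil - 2p + (p-3)/2 \ge M_{j+1}$. Using the index shifts $\frac{1}{p}\cdot\frac{p-1}{2}\sum_{i=1}^{\tau-j-1} p^i = \frac{p-1}{2}\sum_{i=0}^{\tau-j-2} p^i$ and $\frac{3}{p}\sum_{i=0}^{\tau-j-2} p^i = 3\sum_{i=0}^{\tau-j-3} p^i + 3/p$, one finds $M_j/p = M_{j+1} + (3p+1)/2 + 1/p$, and therefore $M_j/p - 2p + (p-3)/2 = M_{j+1} - 1 + 1/p$. Since $p \ge 5$ is odd, $(p-3)/2$ is an integer, so $\lceil M_j/p \rceil - 2p + (p-3)/2$ is an integer strictly greater than $M_{j+1} - 1$, hence at least $M_{j+1}$, closing the induction.

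I do not anticipate any serious obstacle: this is a streamlined version of Lemma \ref{hinterlassen} in which the target at level $\tau$ is dropped, and the arithmetic parallels that already carried out there. The only point requiring care is the bookkeeping of the telescoping sums under division by $p$, which is routine once one tracks the index shifts above.
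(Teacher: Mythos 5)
Your proposal is correct and follows essentially the same route as the paper: both proceed by induction, applying Lemma \ref{FS6} once per level so that the count $M_l$ reproduces itself one level up while the $6p-9\ge 2p-2$ leftover variables supply the required $E_\nu^l$ variables, and your ceiling computation $\left\lceil M_j/p\right\rceil -2p+\frac{p-3}{2}=M_{j+1}$ matches the paper's arithmetic exactly. The only difference is the direction of the induction (downward on $\tau-j$ rather than iterating forward from level $j$), which is immaterial.
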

	\begin{proof}
        For $j=\tau-1$ the statement is trivial, thus, the cases $j \in \{0, 1, \dots, \tau-2\}$ remain.
		Assume that for some $l \in \{j, \dots, \tau-2\}$ one can contract the variables of type $E_\nu^j$ to $2p-2$ variables in $E_\nu^i$ for all $i \in \{j, \dots, l-1\}$ and 
            $4 p^{\tau-l} - \frac{p-1}{2} \sum_{i=1}^{\tau-l-1} p^i+ 3 \sum_{i=0}^{\tau-l-2} p^i -2p -2$
		variables of type $E_\nu^l$. Then they can be contracted with Lemma \ref{FS6} to 
		\begin{align*} 
			 &\left \lceil \frac{4 p^{\tau-l} - \frac{p-1}{2} \sum_{i=1}^{\tau-l-1} p^i+ 3 \sum_{i=0}^{\tau-l-2} p^i -2p -2}{p} \right \rceil  -2p + \frac{p-3}{2} \\
			  &= 4 p^{\tau-l-1} - \frac{p-1}{2} \sum_{i=0}^{\tau-l-2} p^i+ 3 \sum_{i=0}^{\tau-l-3} p^i -2 +1 -2p+\frac{p-3}{2}\\
			&=4 p^{\tau-\left( l+1\right)} - \frac{p-1}{2} \sum_{i=1}^{\tau-\left(l+1\right)-1} p^i+ 3 \sum_{i=0}^{\tau-\left(l+1\right)-2} p^i -2p -2
		\end{align*}
		variables of type $E_\nu^{l+1}$, while leaving at least $6p-9 \ge 2p-2$ variables of type $E_\nu^l$. Via induction one can deduce that on can contract $2p-2$ variables of type $E_\nu^i$ for all $i \in \{ j, \dots, \tau-2\}$ and $ 4 p^{1} - \frac{p-1}{2} \sum_{i=1}^{0} p^i+ 3 \sum_{i=0}^{-1} p^i -2p -2= 2p-2$ variables of type $E_\nu^{\tau-1}$.
	\end{proof}
	\section{Pairs of Forms with $\tau =1$}
	\label{Pairs of forms with tau=1}
	This section contains the proof that for all proper $p$-normalised pairs $f, g$ with $\tau=1$ the equations $f=g=0$ have a non-trivial $p$-adic solution.
	This is primarily done by contracting a $C^{\tau+1}=C^2$ variable if $I_0^0 \ge p^2+p-1$, which indicates that the colour $0$ is zero-representing, and else by contracting a $P^{\tau+1}=P^2$ variable.
	
	The following lemma will exploit $p$-equivalence classes by transforming some pairs $f, g$ into $p$-equivalent pairs $\tilde f, \tilde g$, for which one can contract a $P^2$ variable.
	\begin{Lemma}
		\label{i,i+1}
		Let $1 \le m \le p$ be a natural number and $j \in\{0,\dots, k-1\}$. Let $f, g$ be a pair given by \eqref{f,g} with integer coefficients, $\tau=1$, $q_j \ge pm$, $m_j \ge m \left(2p-1\right)$, $q_{j+1}\ge p-m$ and $I_\mx^{j+1} \ge p-1$. Then there exists a non-trivial $p$-adic solution of $f=g=0$.
	\end{Lemma}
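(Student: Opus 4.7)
The plan is to reduce via $p$-equivalence to the case $j=0$, contract $m$ disjoint sets of level-$0$ variables to primary variables at level at least $1$ by Lemma \ref{5.1}, and then combine these primaries with the colour structure at level $1$ via Lemma \ref{liftingonespecialvariablehigher} in order to produce a primary variable at level $\tau+1=2$. An appeal to Lemma \ref{Hensel} will then deliver the desired $p$-adic solution.

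To make the reduction to $j=0$, I would apply operation \eqref{op1} with $\nu_i=1$ for each $i$ with $l_i<j$ and $\nu_i=0$ otherwise, followed by operation \eqref{op2} with $\lambda_1=\mu_2=p^{-j}$ and $\lambda_2=\mu_1=0$, which divides both forms by $p^j$. The resulting pair $\tilde f,\tilde g$ still has integer coefficients, the level of $x_i$ becomes $l_i-j$ if $l_i\ge j$ and $l_i+k-j$ otherwise, while the level coefficient vector $\binom{\tilde a_i}{\tilde b_i}$ of each variable is unchanged. Consequently, the variables originally at levels $j$ and $j+1$ now sit at levels $0$ and $1$ with identical counts and colour distributions, so the hypotheses of the lemma translate into $q_0\ge pm$, $m_0\ge m(2p-1)$, $q_1\ge p-m$ and $I_\mx^1\ge p-1$ for $\tilde f,\tilde g$. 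Since $p$-equivalence preserves $p$-adic solubility, I may assume from here on that $j=0$.

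Applying Lemma \ref{5.1} to the $m_0$ level-$0$ variables then yields at least
\[
\min\left(\left\lfloor\tfrac{m_0}{2p-1}\right\rfloor,\left\lfloor\tfrac{q_0}{p}\right\rfloor\right)\ge m
\]
pairwise disjoint contractions to primary variables of type $P^1$. If any of these primaries happens to lie at level at least $\tau+1=2$, then setting it to $1$ and all other variables to zero already gives a non-singular solution of \eqref{modeq}, so Lemma \ref{Hensel} concludes. Otherwise all $m$ primaries sit at exact level $1$; using $I_\mx^1\ge p-1$ I select $p-1$ variables of type $E_\nu^1$ where $\nu$ denotes a most populated colour at level $1$, and using $q_1\ge p-m$ I select a further $p-m$ level-$1$ variables of colour different from $\nu$, which are thus of type $E_{\bar\nu}^1$. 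The configuration of $p-1$ variables of type $E_\nu^1$, $p-m$ of type $E_{\bar\nu}^1$ and $m$ of type $P^1$ matches the hypothesis of Lemma \ref{liftingonespecialvariablehigher} with its parameter set equal to $m-1\in\{0,\dots,p-1\}$, yielding a $P^2$ variable. A second application of Lemma \ref{Hensel} completes the proof.

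The main point requiring care is the $p$-equivalence reduction: one must verify that it faithfully transports every combinatorial quantity ($m_j$, $q_j$, $q_{j+1}$ and $I_\mx^{j+1}$) along with the individual colours from levels $j,j+1$ to levels $0,1$. Once this is in hand, the remainder is a direct concatenation of the auxiliaries collected in Section \ref{Contraction Related Auxiliaries}.
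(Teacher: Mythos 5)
Your proposal is correct and follows essentially the same route as the paper: the same $p$-equivalence shift (scaling the variables below level $j$ by $p$ and dividing both forms by $p^j$) to reduce to $j=0$, then Lemma \ref{5.1} to obtain $m$ variables of type $P^1$, then Lemma \ref{liftingonespecialvariablehigher} with parameter $m-1$ applied to the $p-1$ variables of type $E_\nu^1$, the $p-m$ variables of type $E_{\bar\nu}^1$ and the $m$ primaries, yielding a $P^2$ variable and hence a non-singular solution via Lemma \ref{Hensel}. Your explicit case split on a primary already lying at level at least $2$ is subsumed by that lemma's convention, so the two arguments coincide in substance.
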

	\begin{proof}
		Apply $x \mapsto px$ for all variables at level $l$ for all $l \in \{0, \dots, j-1\}$, and then multiply both equations with $p^{-j}$. This transforms the pair $f, g$ into a $p$-equivalent pair with integer coefficients, $q_0\ge pm$, $m_0\ge m \left(2p-1\right)$, $q_1 \ge p-m$ and $I_\nu^1=I_\mx^1 \ge p-1$ for some $\nu$. Using Lemma \ref{5.1}, one can contract the $E^0$ variables to $m$ variables of type $P^1$. The $p-1$ variables of type $E_\nu^1$ and the $p-m$ variables of type $E_{\bar \nu}^1$ can be contracted together with the $P^1$ variables to a $P^2$ variable due to Lemma \ref{liftingonespecialvariablehigher}. Thus, the transformed pair has a non-trivial $p$-adic solution, from which it follows that the $p$-equivalent pair $f, g$ has one as well.
	\end{proof}
	Due to this lemma, one can assume that if $q_j \ge pm$ and $m_j \ge m \left(2p-1\right)$ for some $j \in \{0, \dots, k-1\}$ that either $q_{j+1} \le p-m-1$ or $I_\mx^{j+1} \le p-2$. For a $p$-normalised pair, one has $m_0 \ge 2p^2-2p+1 \ge \left(p-1\right)\left(2p-1\right)$ and $q_0 \ge p^2-p+1 \ge \left(p-1\right) p$. Therefore, one can assume that one has either $q_1 =0$ or $I_\mx^1 \le p-2$. The following two lemmata will divide the case $\tau =1$ into $I_\mx^1 \ge p-1$ and thus $q_1=0$ and $I_\mx^1 \le p-2$.
	\begin{Lemma}
        Let $f, g$ be a proper $p$-normalised pair with $\tau =1$ and $I_\nu^1=I_\mx^1 \ge p-1$. Then the equations $f=g=0$ have a non-trivial $p$-adic solution.
	\end{Lemma}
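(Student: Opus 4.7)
The plan is to construct a primary variable at level $2$ (or in one sub-case a colourful variable at level $2$) and then apply Lemma \ref{Hensel}. The first preparatory step uses Lemma \ref{i,i+1} with $j = 0$ and $m = p - 1$: the bounds $m_0 \ge 2p^2 - 2p + 1 \ge (p-1)(2p-1)$ and $q_0 \ge p^2 - p + 1 \ge (p-1)p$ for a proper $p$-normalised pair allow this choice, so the hypothesis $I_\mx^1 \ge p - 1$ forces $q_1 = 0$; otherwise that lemma already delivers the solution. Hence every level-$1$ variable has the common colour $\nu$ with $m_1 = I_\nu^1$, and the bound $m_0 + q_1 \ge 3p^2 - 3p + 1$ from Lemma \ref{order} sharpens to $m_0 \ge 3p^2 - 3p + 1$.

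I would then split according to whether $I_0^0 \ge p^2 + p - 1$ or $I_0^0 \le p^2 + p - 2$. In the first range, Lemma \ref{zeropresenting} makes the colour $0$ zero-representing, so by the strategy of Section \ref{Strategy} it is enough to produce a $C^2$ variable. In the second range the colourful supply at level $0$ is forced to grow and I would aim at a $P^2$ variable. The end-game in either case is Lemma \ref{liftingonespecialvariablehigher}: one $S^1$ variable ($S \in \{C,P\}$) together with $p-1$ of type $E_\nu^1$ and $p-1$ of type $E_{\bar\nu}^1$ contracts to an $S^2$. Since $m_1 \ge p - 1$, the $E_\nu^1$ variables come from the original pair; the single $S^1$ variable is obtained from Lemma \ref{5.1} applied to level $0$, which yields $\min(\lfloor m_0/(2p-1)\rfloor, \lfloor q_0/p\rfloor) \ge p - 1$ disjoint $P^1 \subseteq C^1$ contractions.

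The delicate ingredient is the supply of $E_{\bar\nu}^1$ variables, since every level-$1$ variable starts out in colour $\nu$. I would produce them by contracting level-$0$ variables of colour $\nu$ spanning two distinct nuances via Lemma \ref{createavariableinm_1-I_0^1} (or its bulk version Lemma \ref{moreinm_1-I_0^0}). That two different nuances of colour $\nu$ are populated follows from the bound \eqref{NichtI_00Grenze} and the extremality $I_{00}^0 \ge I_{0\mu}^0$ in Lemma \ref{properties}(iii) when $\nu = 0$, and from a parallel argument among the non-zero colours when $\nu \ne 0$. A sub-split on the parameter $r$ of \eqref{r} may be needed to cover the smallest allowed values of $q_0$.

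The main obstacle I expect is disjointness bookkeeping: the level-$0$ variables spent to build the $E_{\bar\nu}^1$ must not overlap the variables fed to Lemma \ref{5.1}, and in the zero-representing sub-case one needs in addition to keep a colour-$0$ contraction to level $2$ intact so that, should the constructed $C^2$ variable fail to trace back to colour $0$, it can be paired with a purely colour-$0$ variable at level $\ge 2$. Closing the accounting amounts to playing $q_0 \le 2p^2 - 2$ against $m_0 \ge 3p^2 - 3p + 1$ and \eqref{NichtI_00Grenze}, and is the step where the hypothesis $s > 2k^2$ is really spent.
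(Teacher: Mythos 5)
Your opening moves match the paper: the reduction to $q_1=0$ via Lemma \ref{i,i+1} with $m=p-1$, the resulting bound $m_0\ge 3p^2-3p+1$, and the idea of switching to a colourful target once colour $0$ is zero-representing are all exactly what the paper does. The proof breaks down, however, at your chosen endgame. You instantiate Lemma \ref{liftingonespecialvariablehigher} with $m=0$, i.e.\ one $S^1$ variable, $p-1$ variables of type $E_\nu^1$ and $p-1$ variables of type $E_{\bar\nu}^1$, and so you must manufacture $p-1$ level-one variables \emph{not} of the dominant colour $\nu$. That supply cannot be established from the stated hypotheses. For $\nu\neq 0$ there is no lower bound at all on $I_\nu^0$, let alone on its nuance distribution, so "a parallel argument among the non-zero colours" has nothing to stand on (Lemma \ref{properties} controls nuances only for colour $0$); note that Lemma \ref{properties}(ii) does at least force $\nu\in\{0,p\}$, which you never use. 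Even for $\nu=0$ the counting fails: producing $p-1$ variables of type $E_{\bar 0}^1$ via Lemma \ref{moreinm_1-I_0^0} needs roughly $p(p-1)+p^2-3p+3=2p^2-4p+3$ colour-$0$ variables at level $0$ with a suitable nuance split, while in your second case $I_0^0$ can be as small as $p^2-3p+3$ and even in your first case only $p^2+p-1$ is guaranteed; the bound \eqref{NichtI_00Grenze} also degenerates when $q_0$ is near its maximum $2p^2-2$.

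The paper avoids this bottleneck entirely by splitting on $r(f,g)$ rather than on $I_0^0$. If $r\ge 0$, then $q_0\ge p^2$, so Lemma \ref{5.1} yields $p$ (not just one) variables of type $P^1$, and Lemma \ref{2p-1} with these and $p-1$ variables of type $E_\nu^1$ gives a $P^2$ variable with \emph{no} $E_{\bar\nu}^1$ needed. If $r=-1$, then $I_0^0\ge 2p^2-3p+2\ge p^2+p-1$, colour $0$ is zero-representing, and one uses Lemma \ref{fromitoj} to turn $p^2-p$ colourful level-$0$ variables plus $2p-2$ colour-$0$ variables into $p-1$ variables of type $C^1$; then Lemma \ref{liftingonespecialvariablehigher} is applied with $m=p-2$, so only a \emph{single} $E_{\bar\nu}^1$ variable is required, obtained via Lemma \ref{FS5} from colour-$0$ variables when $\nu=p$, and via Lemma \ref{createavariableinm_1-I_0^1} (using $I_{00}^0\ge p-1$ and $I_0^0-I_{00}^0\ge 1$, which uses $q_0\le p^2-1$) when $\nu=0$. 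Incidentally, in your own second case one has $q_0=m_0-I_0^0\ge 2p^2-4p+3\ge p^2$, so the Lemma \ref{5.1} plus Lemma \ref{2p-1} route would already close it; as written, though, your plan relies on a supply of $E_{\bar\nu}^1$ variables that you cannot guarantee, so the argument has a genuine gap.
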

    \begin{proof}
		As describe above, one can assume that $q_1 =0$ and thus $I_\nu^1=I_\mx^1=m_1$. It follows that
		\begin{align}
		\label{low}
            m_0 \ge 3p^2-3p+1-q_1 =3p^2-3p+1 \ge 2p^2-p.
		\end{align}
		Assume first, that $r\left(f, g\right)=r \ge 0$. Then one can use Lemma \ref{5.1} to contract the $E^0$ variables to $p$ variables of type $P^1$ and Lemma \ref{2p-1} to contract the $P^1$ variables together with the $E_\nu^1$ variables to a $P^2$ variable. 
		Consequently, one can assume, that $r=-1$ which leads to
		\begin{align*}
            I_0^0 \ge 3p^2-3p+1-q_0-q_1 \ge 2p^2-3p+2 \ge p^2 +p-1.
		\end{align*}
        Hence, the colour $0$ is zero-representing and it suffices to show that one can contract a $C^2$ variable.
        
		By Lemma \ref{properties}, one knows that $\nu \in \{0,p\}$. If $\nu=p$, one can contract $2p-2$ of the variables of type $E_0^0$ to an $E_0^1$ variable, using Lemma \ref{FS5} once, because $2 p^2 -3p+2\ge 8p-7=2\left(4p-4\right)+1$ for all $p\ge 5$. If on the other hand $\nu=0$, one has
		\begin{align*}
		I_{00}^{0} \ge  \frac{I_0^0}{p} \ge 2p-3\ge p-1,
		\end{align*}
		due to Lemma \ref{properties} and 
		\begin{align*}
		I_0^0 - I_{00}^0 \ge 2p^2-2p-q_0-\left(m_1-I_0^1\right) \ge p^2 -2p+1\ge 1,
		\end{align*}
		by \eqref{NichtI_00Grenze}.
        Hence, one can contract $p-1$ variables of type $E_{00}^1$ and one $E_{0\bar 0}^0$ variable to an $E_{\bar 0}^1$ variable due to Lemma \ref{createavariableinm_1-I_0^1}.
        
        In both cases, there are still at least $2p^2-3p+2-\left(2p-2\right) =2p^2-5p+4\ge 2p-2$ variables of type $E_0^0$ remaining. Those contract with $p^2-p$ of the $C^0$ variables to $p-1$ variables of type $C^1$ due to Lemma \ref{fromitoj}. All in all, one has $p-1$ variables of type $E_\nu^1$, one $E_{\bar \nu}^1$ variable and $p-1$ variables of type $C^1$. Due to Lemma \ref{liftingonespecialvariablehigher} these can be contracted to a $C^2$ variable, which completes the proof.
    \end{proof}
	\begin{Lemma}
        Let $f, g$ be a proper $p$-normalised pair with $\tau =1$ and $I_\nu^1=I_\mx^1 \le p-2$. Then the equations $f=g=0$ have a non-trivial $p$-adic solution.
	\end{Lemma}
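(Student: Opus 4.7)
The plan is to split into two cases according to the size of $q_0 + p I_\mx^1$. Since $I_\mx^1 \le p-2$, each of the $p+1$ colour classes at level $1$ contains at most $p-2$ variables, so $m_1 \le (p+1)(p-2) = p^2-p-2$ and hence $m_0 \ge 4p^2-4p+1-m_1 \ge 3p^2-3p+3$. Let $\nu$ denote a colour at level $1$ with $I_\nu^1 = I_\mx^1$.

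When $q_0 + p\,I_\mx^1 \ge 2p^2 - p$, the hypothesis immediately gives $\lfloor q_0/p\rfloor \ge 2p-1-I_\mx^1$, and from $m_0 \ge (2p-1)^2 - (p+1)I_\mx^1$ together with the elementary inequality $(p-2)I_\mx^1 \ge 0$ one also gets $m_0 \ge (2p-1)(2p-1-I_\mx^1)$, so $\lfloor m_0/(2p-1)\rfloor \ge 2p-1-I_\mx^1$. Lemma~\ref{5.1} therefore furnishes $2p-1-I_\mx^1$ variables of type $P^1$. Applying Lemma~\ref{2p-1} with $m = p-1-I_\mx^1 \in \{0,1,\dots,p-1\}$ combines these $P^1$ variables with the $I_\mx^1$ variables of type $E_\nu^1$ into a variable of type $P^2$, and Lemma~\ref{Hensel} yields the non-trivial $p$-adic solution.

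When $q_0 + p\,I_\mx^1 < 2p^2 - p$, a short computation using $m_0 \ge 4p^2-4p+1-(p+1)I_\mx^1$ gives $I_0^0 \ge 2p^2-3p+2-I_\mx^1 \ge 2p^2-4p+4 \ge p^2+p-1$ for $p \ge 5$, hence colour $0$ is zero-representing by Lemma~\ref{zeropresenting} and it suffices to contract a variable of type $C^2$. Lemma~\ref{5.1} still yields $\lfloor q_0/p\rfloor \ge p-1$ variables of type $P^1 \subset C^1$ from the colourful level-$0$ variables. The now abundant pool of $E_0^0$ variables is used to manufacture the necessary auxiliaries at level $1$: auxiliary $E_0^1$ variables are produced via Lemma~\ref{FS3}, \ref{FS5}, \ref{FS6} or \ref{FS6klein} (with the sharper $p=5$ version of Lemma~\ref{FS6} invoked when needed), and an $E_{\bar 0}^1$ helper is built by applying Lemma~\ref{createavariableinm_1-I_0^1} to $p-1$ variables of type $E_{00}^0$ and one of type $E_{0\bar{0}}^0$ (both available thanks to $I_{00}^0 \ge I_0^0/p$ and Lemma~\ref{properties}(iii)). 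A final application of Lemma~\ref{liftingonespecialvariablehigher} at level $1$ with $\nu = 0$, $m = p-2$ and $S = C$ merges the $p-1$ variables of type $C^1$, the $p-1$ auxiliary $E_0^1$ variables and the $E_{\bar 0}^1$ helper into the desired $C^2$ variable.

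The main obstacle is the boundary regime of the colourful route, where $q_0 + p\,I_\mx^1$ sits just below $2p^2 - p$: then $I_0^0$ is only slightly above its zero-representing threshold, and the counts produced by a single application of Lemma~\ref{FS6klein} barely fail to reach the $p-1$ auxiliary $E_0^1$ variables required. Handling this forces a further subdivision of the range of $q_0$ (with $p=5$ needing separate attention because of the tighter combinatorial count in Lemma~\ref{FS6}) and the deployment of the more flexible tools of Section~\ref{Contraction Related Auxiliaries}, notably Lemmata~\ref{x-y-Lemma}, \ref{x-y-Lemmaneu} and \ref{neu}, together with careful nuance accounting from Lemma~\ref{properties}(iii) to avoid depleting the $E_0^0$ pool when extracting both the $E_0^1$ and the $E_{\bar 0}^1$ helpers.
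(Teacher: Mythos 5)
Your first case ($q_0 + pI_\mx^1 \ge 2p^2-p$) is correct and is in fact a clean variant of the paper's $r=p-1$ argument: the arithmetic giving $2p-1-I_\mx^1$ contractions from Lemma \ref{5.1} and the application of Lemma \ref{2p-1} with $m=p-1-I_\mx^1$ check out. The second case, however, has a genuine gap at its core: the $E_{\bar 0}^1$ helper. You manufacture it from $p-1$ variables of type $E_{00}^0$ and one of type $E_{0\bar 0}^0$, citing Lemma \ref{properties}(iii), but that lemma only bounds $I_{00}^0$ from below; the existence of an $E_{0\bar 0}^0$ variable requires $I_0^0-I_{00}^0\ge 1$, and the only available estimate, \eqref{NichtI_00Grenze}, gives $I_0^0-I_{00}^0\ge 2p^2-2p-q_0-\left(m_1-I_0^1\right)$, which is vacuous in a large part of your Case 2. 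For instance $I_\mx^1=0$ forces $m_1=0$, and $q_0$ may be as large as $2p^2-p-1$; then every colour-$0$ variable at level $0$ can have nuance $\left(0,0\right)$ (this is consistent with Lemma \ref{properties}(iii)) and there are no level-$1$ variables at all, so neither Lemma \ref{createavariableinm_1-I_0^1} nor an existing $E_{\bar 0}^1$ variable is available, and your final application of Lemma \ref{liftingonespecialvariablehigher} with $m=p-2$ cannot be made. The paper avoids the helper in exactly these configurations by producing $p+r\ge p$ variables of type $C^1$ with Lemma \ref{fromitoj} (which costs only $2p-2$ colour-$0$ variables) and closing with Lemma \ref{2p-1}, which needs auxiliary $E$-variables of a single colour only.

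Two further points make Case 2 incomplete as written. First, you take the $p-1$ variables of type $P^1$ from Lemma \ref{5.1} applied to all level-$0$ variables; those contractions may consume up to $\left(p-1\right)\left(2p-1\right)$ level-$0$ variables, most of them possibly of colour $0$, so the ``abundant pool of $E_0^0$ variables'' is not justified from your bound $m_0\ge 3p^2-3p+3$. Second, and relatedly, the counts in your admitted boundary regime really do fail: with $I_\mx^1=p-2$, $q_0=p^2+p-1$ and $m_0=3p^2-3p+3$ one only has $I_0^0=2p^2-4p+4$, which is below what Lemma \ref{FS6klein} or Lemma \ref{hinterlassen} needs to deliver the $p-1$ (or even $p-2$) auxiliary $E_0^1$ variables while still leaving $2p-2$ colour-$0$ variables for Lemma \ref{fromitoj}. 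The paper closes this regime with an ingredient absent from your proposal: the $p$-equivalence level-shift of Lemma \ref{i,i+1}, which sharpens the lower bound to $m_0\ge 4p^2-6p+3$ (and, after further use, to $4p^2-5p+rp+5+r$ resp. $4p^2-4p+1$), together with the $I_\mu\left(\mathscr{H}\right)\ge p$ versus $q\left(\mathscr{H}\right)\ge p$ dichotomy via Lemma \ref{TAEL3} when the lone extra $E^1$ variable might be of colour $0$, and a final contradiction with the $p$-normalisation inequalities in the $r=-1$ case. Your closing paragraph names additional lemmata but does not carry out any of this, so the proof is incomplete precisely where the real difficulty of the $\tau=1$ case lies.
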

	\begin{proof}
	    By $I_\mu^1 \le I_\mx^1\le p-2$ for all  $0\le \mu \le p$, it follows that
		\begin{align*}
		m_1 \le \left( p-2\right) \left( p+1\right) = p^2-p-2.
		\end{align*}
		If one has $q_1\ge p$ and $m_1 \ge 2p-1$ one can assume, due to Lemma \ref{i,i+1}, that either $q_2\le p-2$ or $I_\mx^2 \le p-2$.	For $q_2 \le p-2$ it follows that
		\begin{align*}
		m_0 \ge 5p^2 -5p+1 -p^2 +p+2 -p +2 = 4 p^2-5p +5 \ge 4p^2-6p+3,
		\end{align*}
		while for $I_\mx^2 \le p-2$ it follows that $m_2 \le p^{2}-p-2$ and thus
		\begin{align*}
		m_0 \ge 6p^2-6p+1 - p^2+p+2 -p^2 + p+2 = 4 p^2 -4 p + 5 \ge 4p^2-6p+3.
		\end{align*}
		Else, one has either $q_1 \le p-1$ or $m_1 \le 2p-2$. If $q_1 \le p-1$ it follows that $m_1 \le 2p-2$ as well, because $m_1=I_\mx^1 + q_1$. Then one obtains
		\begin{align*}
		m_0 \ge 4p^2 - 4 p +1 - 2p+2 = 4p^2 -6p +3.
		\end{align*}
		One of these three bounds holds in any case, thus, one can assume that 
		\begin{align}
				\label{m_0tau=1}
		m_0 \ge 4p^2 -6p +3.
		\end{align}
		This lower bound for $m_0$ leads to
		\begin{align}
		\label{I_0^0tau=1}
		I_0^0 =m_0-q_0 \ge 4 p^2 -6 p+ 3 -p^2 -\left(r+1\right) p +1 = 3p^2 -rp-7p +4.
		\end{align}
		For $r \le p-2$ this is at least as big as $p^2+p-1$ for $p\ge 5$, hence, it suffices to contract a $C^2$ variable, whereas one has to contract a $P^2$ variable for $r=p-1$. The remaining proof will be divided into three cases, based on the value of $r=r\left(f, g\right)$.
		\paragraph{\textbf{Case} $\mathbf{r=p-1}$}
		If $m_0 \ge \left(2p-1\right) \left(2p-1\right) = 4p^2 -4p+1$, one can use Lemma \ref{5.1} to contract the $E^0$ variables to $2p-1$ variables of type $P^1$. By Lemma \ref{p^n=2}, it follows that one can contract those $P^1$ variables to a $P^2$ variable. Hence, one can assume that $m_0 \le 4p^2-4p$ and thus $m_1 \ge 1$. Due to \eqref{m_0tau=1} one has $m_0\ge 4 p^2 - 6p +2=\left(2p-1\right) \left(2p-2\right) $. Therefore, Lemma \ref{5.1} shows, that one can contract the $E^0$ variables to $2p-2$ variables of type $P^1$. Lemma \ref{p^n} with $n=2$ shows that one can contract them together with one of the $E^1$ variables to a variable of a level at least $2$. This contraction cannot contain only the $E^1$ variable, thus the resulting variable has to be a $P^2$ variable. 

        \paragraph{\textbf{Case} $\mathbf{0 \le r \le p-2}$}
        One can assume that $I_\nu^1=I_\mx^1 \le p-r-2$, because else, Lemma \ref{fromitoj} can be used to contract $p^2+rp$ of the $C^0$ variables together with $2p-2$ variables of type $E_0^0$ to $p+r$ variables of type $C^1$. Then one can contract them together with the $E_\nu^1$ variables to a $C^2$ variable due to Lemma \ref{2p-1}.
        It follows that that
		\begin{align}
		\label{x}
		m_1 \le p^2 -\left(r+1\right)p -r-2.
		\end{align}
		If $q_2\ge p-1$ and $I_\mx^2\ge p-1$, one can use Lemma \ref{contractiontolevell+1} to contract $p\left(p-r-1\right)$ of the variables of type $E_0^0$ to $p-r-1$ variables of type $E^1$. This is possible, because afterwards, there are still at least 
		\begin{align*}
		3p^2-rp-7p+4-p\left(p-r-1\right) =2p^2-6p+4 \ge 3p-2
		\end{align*}
		of the $E_0^0$ variables unused. Lemma \ref{fromitoj} can be used to contract $p^2+rp$ of the $C^0$ and $2p-2$ of the remaining $E_0^0$ variables to $p+r$ variables of type $C^1$. One can assume, that the $C^1$ variables are of type $E^1$, because else one already has a $C^2$ variable. Take the set $\mathscr{K}$ of the $2p-1$ variables of type $E^1$ that were contracted, from which $p+r$ are of type $C^1$. If there is a $\mu$ with $I_\mu\left(\mathscr{K}\right) \ge p$ there are at least $p$ variables of type $E_\mu^1$ in $\mathscr{K}$. As $p+r$ of the variables in $\mathscr{K}$ are type $C^1$, it follows that there is at least one $C_\mu^1$ variable in $\mathscr{K}$. Thus, one can contract the $E_\mu^1$ variables in $\mathscr{K}$ with Lemma \ref{onecolourfulandp-1differentvariablesofthesamecolour} to a $C^2$ variable. Else, one has $q\left(\mathscr{K}\right) \ge p$ and thus, one has transformed the pair $f, g$ into another one with $m_1 \ge 2p-1$ and $q_1 \ge p$. This new pair has the same values for $q_2$ and $I_\mx^2$, thus it follows from Lemma \ref{i,i+1} that it has a non-trivial $p$-adic solution. Consequently the pair $f, g$ has one as well. Thus, one can assume, that either $q_2 \le p-2$ or $I_\mx^2 \le p-2$. 
		
		By \eqref{x}, it follows for $q_2 \le p-2$ that
		\begin{align*}
		m_0 \ge 5p^2-5p+1 -p^2 +\left(r+1\right)p +r+2 -p+2= 4p^2-5p+rp+5+r
		\end{align*}
		and for $I_\mx^2 \le p-2$ that $m_2 \le p^2-p-2$ and therefore
		\begin{align*}
		m_0 \ge 6p^2-6p+1 -p^2 +\left(r+1\right)p+r+2-p^2+p+2= 4p^2-4p +rp +5+r.
		\end{align*}
		In both cases, one obtains the lower bound
		\begin{align*}
		m_0 \ge 4p^2-5p+rp+5+r,
		\end{align*}
		which leads to 
		\begin{align*}
		I_0^0=m_0-q_0 \ge 3p^2-6p+6+r\ge 2p^2-2rp+2r-1.
		\end{align*}
		Now one can distinguish between the cases $m_1 \ge 1$ and $m_1 =0$.
		
		\subparagraph{\textit{Case} $m_1 \ge 1$} One can use Lemma \ref{FS6klein} to contract the $E_0^0$ variables to
		\begin{align*}
		\left \lceil \frac{  2p^2-2rp+2r-1}{2p-2} \right \rceil -4= p-r-2
		\end{align*}
		variables of type $E_0^1$. This leaves at least $6p-9 \ge 2p-2$ variables of type $E_0^0$. Hence, one can use Lemma \ref{fromitoj} to contract them with $p^2+rp$ of the $C^0$ variables to $p+r$ variables of type $C^1$. The set $\mathscr{H}$, containing the $p-r-2$ variables of type $E_0^1$, the $p+r$ variables of type $C^1$ and one further $E^1$ variables, which exists due to $m_1\ge1$, contains a contraction to a $C^2$ variable. If none of the $C^1$ variables is already of type $C^2$, there is either a $\mu$ such that $I_\mu\left(\mathscr{H}\right) \ge p$ or $q\left( \mathscr{H} \right)\ge p$. If $I_\mu\left(\mathscr{H}\right) \ge p$, then at least one of the $E_\mu^1$ variables in $\mathscr{H}$ is a $C^1$ variable and thus $\mathscr{H}$ contains a contraction to a $C^2$ variable due to Lemma \ref{onecolourfulandp-1differentvariablesofthesamecolour}. If on the other hand $q\left(\mathscr{H}\right) \ge p$, then $\mathscr{H}$ contains a contraction to a variable at level at least $2$, which can be traced back to at least two variables of different colour at level $1$, due to Lemma \ref{TAEL3}.
		The only way that such a variable is not of type $C^2$, is that the contraction contains no $C^1$ variable. The variables in $\mathscr{H}$ which are not of type $C^1$ are $p-r-2$ variables of type $E_0^1$ and one $E^1$ variable. As the contracted variable can be traced back to two variables of different colours at level $1$, the $E^1$ variable has to be an $E_{\bar 0}^1$ variable. But if a subset $\mathscr{K}$ of $\mathscr{H}$ contains this variable and additionally only variables of type $E_0^1$, then it cannot be a contraction to a variable at level at least $2$, because then one has exactly one $i \in \mathscr{K}$ for which the second entry $\tilde b_i$ of the level coefficient vector is not congruent to $0$ modulo $p$. Therefore, one cannot solve $\sum_{j \in \mathscr{K}} \tilde b_j y_j^k \equiv 0 \bmod p$ with all $y_j \nequiv 0 \bmod p$. Consequently, this cannot occur, and the resulting variable is a $C^2$ variable.
		\subparagraph{\textit{Case} $m_1 =0$}This leads to the even better bound
		\begin{align*}
		m_0 \ge 4p^2-4p+1
		\end{align*} 
		and thus
		\begin{align*}
		I_0^0 \ge 4p^2-4p+1-p^2-\left(r+1\right)p +1= 3p^2 -\left(5+r\right)p +2.
		\end{align*}
		For $p\ge7$, this is at least as big as $2p^2+2p-2rp+2r-3$, thus, one can use Lemma \ref{hinterlassen} to contract the $E_0^0$ variables to $p-r-1$ variables of type $E_0^1$, while leaving at least $2p-2$ variables of type $E_0^0$ unused. For $p=5$, this is at least as big as $3p^2-rp-5p+1$, thus Lemma \ref{hinterlassen5} shows that one can contract the $E_0^0$ variables to $p-r-1$ variables of type $E_0^1$ as well, while leaving at least $2p-2$ variables of type $E_0^0$ unused. In both cases, one can use Lemma \ref{fromitoj} to contract the $2p-2$ variables of type $E_0^0$ with $p^2+rp$ of the $C^0$ variables to $p+r$ variables of type $C^1$. Then one can contract them together with the $p-r-1$ variables of type $E_0^1$ to a $C^2$ variable due to Lemma \ref{2p-1}.
		\paragraph{\textbf{Case} $\mathbf{r=-1}$}
		Note first, that one has $m_1-I_0^1 \le p^2-2p =\left(p-2\right)p$ due to $I_\mx^1\le p-2$, and thus
		\begin{align*}
		I_0^0-I_{00}^0 \ge 2p^2-2p-q_0 -\left(m_1-I_0^1\right)\ge1,
		\end{align*}
		by \eqref{NichtI_00Grenze}.
		If $m_0 \ge 4p^2-4p$, one obtains the lower bound
		\begin{align*}
		I_0^0 \ge 3p^2-4p+1,
		\end{align*}
		and consequently
		\begin{align*}
		I_{00}^0\ge \frac{I_0^0}{p} \ge 3p-4 \ge p-1.
		\end{align*}
		Therefore, one can take $p-1$ variables of type $E_{00}^0$ and one of type $E_{0\bar 0}^0$, which can be contracted to a $E_{\bar 0}^1$ variable by Lemma~\ref{createavariableinm_1-I_0^1}. There are at least $3p^2-5p+1$ variables of type $E_0^0$ remaining, which can be contracted to $p-1$ variables of type $E_0^1$ using Lemma~\ref{FS6klein} for $p\ge7$ and Lemma~\ref{FS6} for $p=5$. This leaves at least $6p-9 \ge 2p-2$ variables of type $E_0^0$, which can be contracted with $p^2-p$ of the $C^0$ variables to $p-1$ variables of type $C^1$ using Lemma \ref{fromitoj}. Then one can use Lemma \ref{liftingonespecialvariablehigher} to contract the $p-1$ variables of type $E_0^1$, the $p-1$ variables of type $C^1$ and the $E_{\bar 0}^1$ variable to a $C^2$ variable. Hence, one can assume that 
		\begin{align*}
		m_0 \le 4p^2-4p-1.
		\end{align*}
		It follows that $m_1 \ge 2$. Note, that one has
		\begin{align*}
            I_0^0 \ge 3p^2 -6p+4 \ge 2p^2-1=\left(2p-2\right)\left(p+1\right)+1 \quad \textup{and} \quad I_{00}^0 \ge 3p-6 \ge p-1
		\end{align*}
		due to \eqref{I_0^0tau=1}.
		\subparagraph{\textit{Case} $m_1-I_0^1=0$}
		Due to $m_1 \ge 2$, one has $I_0^1 \ge 2$. Take a set, which contains $p-1$ variables of type $E_{00}^0$ and one $E_{0\bar0}^0$ variable. This set contains a contraction to an $E_{\bar 0}^1$ variable due to Lemma \ref{createavariableinm_1-I_0^1}. Then there are at least $3p^2-7p+4\ge 2p^2-2p+1$ variables of type $E_0^0$ left. Therefore, one can use Lemma \ref{FS6klein} to contract them to $p-3$ variables of type $E_0^1$, giving a total of $p-1$, while leaving at least $6p-9\ge 2p-2$ variables of type $E_0^0$ unused. Lemma \ref{fromitoj} can be used to contract $2p-2$ of the remaining $E_0^0$ variables together with $p^2-p$ of the $C^0$ variables to $p-1$ variables of type $C^1$. One can contract the $p-1$ variables of type $E_0^1$, the $E_{\bar 0}^1$ variable and the $p-1$ variables of type $C^1$ to a $C^2$ variable, due to Lemma \ref{liftingonespecialvariablehigher}.
		\subparagraph{\textit{Case} $m_1-I_0^1 \ge 1$}  Use Lemma \ref{FS6klein} to contract the $E_0^0$ variable to $p-2$ variables of type $E_0^1$ while leaving at least $6p-9 \ge 2p-2$ unused. Then one can take Lemma \ref{fromitoj} to contract $p^2-p$ of the $C^0$ variables together with $2p-2$ of the remaining $E_0^0$ variables to $p-1$ variables of type $C^1$. If $I_0^1 \ge 1$, then one can use Lemma \ref{liftingonespecialvariablehigher} to contract the $p-1$ variables of type $E_0^1$, the $p-1$ variables of type $C^1$ and one of the $E_{\bar 0}^1$ variables to a $C^2$ variable. Thus, one can assume, that $I_0^1=0$, $m_1-I_0^1\ge2$ and
		\begin{align*}
            m_1 \le p^2-2p,
		\end{align*}
		because $I_\mx^1 \le p-2$.
        If none of the $C^1$ variable is already of type $C^2$, they are all $E^1$ variables. Take a set $\mathscr{K}$ containing the $C^1$ variables, two of the $E_{\bar 0}^1$ variables which exist due to $m_1-I_0^1 \ge 2$ and the $p-2$ variables of type $E_0^1$. If there is a $\mu$ such that $I_\mu\left(\mathscr{K} \right) \ge p$, then there is at least one $C_\mu^1$ variable in $\mathscr{K}$. Due to Lemma \ref{onecolourfulandp-1differentvariablesofthesamecolour} one can contract the variables in $\mathscr{K}$ of colour $\mu$ to an $C^2$ variable. Else, one has $q \left( \mathscr{K}\right) \ge p$, because $|\mathscr{K}| = 2p-1$. It follows, that one has transformed the pair $f, g$ into a pair with $m_1 \ge 2p-1$ and $q_1 \ge p$. The new pair either has a non-trivial $p$-adic solution due to Lemma \ref{i,i+1}, from which it would follow that $f, g$ has one as well, or it has $q_2 \le p-2$ or $I_\mx^2 \le p-2$. As the new pair has the same parameter $q_2$ and $I_\mx^2$ as the pair $f, g$, one can assume, that $q_2 \le p-2$ or $I_\mx^2\le p-2$ holds for $f, g$ as well.
		This contradicts the $p$-normalisation, because then one of the inequalities
		\begin{align*}
            \quad m_0 + m_1 + q_2 \le 4p^2-4p-1 + p^2-2p +p-2= 5p^2-5p-3 < 5p^2-5p+1,
		\end{align*}
		and
		\begin{align*}
		  \qquad \quad \enskip m_0 + m_1 + m_2 \le 4p^2 -4p -1 + p^2-2p+ p^2-p-2 = 6p^2-7p-3 < 6p^2-6p+1,
		\end{align*}
        holds, hence, it follows that this case cannot occur.

    This concludes the case $r=-1$ and with that the claim follows.
	\end{proof}
    This shows that for every proper $p$-normalised pair $f,g$ the equations $f=g=0$ have a non-trivial $p$-adic solution provided that $\tau=1$.
    \section{Pairs of Forms with $\tau\ge2$}
    \label{Pairs of forms with tau ge 2}
    This section will prove the theorem for $\tau \ge2$, which completes the proof. In general, the proof relies on the same techniques independent on the actual value of $\tau$, but sometimes one has to separate the cases $\tau=2$ and $\tau=3$, because the proof is easier for bigger $\tau$ and hence, the cases $\tau \in \{2,3\}$ require some extra effort. 
    
    In order to avoid a repetition of the same argument, the following lemma will point out a situation in which one can contract a $C^{\tau+1}$ or a $P^{\tau+1}$ variable, which will appear constantly in the proof for $\tau \ge2$.
    \begin{Lemma}
		\label{ziel}
		Let $S \in \{C, P\}$ and $0 \le m \le p-1$. Let there be $p^{\tau-j+1}+ mp^{\tau-j}$ variables of type $S^j$ for some $j \in \{ 0, \dots, \tau -1\}$ and $p-m-1$ variables of type $E_\nu^\tau$ for some $\nu$. Furthermore for $i \in \{ j, j+1, \dots, \tau-1\}$ let there be $2p-2$ variables of type $E_{\nu_i}^i$ for some colours $\nu_i$. Then one can contract them to a variable of type $S^{\tau+1}$.
	\end{Lemma}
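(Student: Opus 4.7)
The plan is to reduce this lemma to a two-step application of previously established results, first raising the level of the $S^j$ variables from $j$ up to $\tau$, and then combining with the $E_\nu^\tau$ variables to cross the final level.

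First, I would invoke Lemma \ref{fromitoj} with indices $i=j$ and target level $\tau$, taking the parameter $m$ in that lemma to equal the $m$ of the present statement. The hypotheses match exactly: we have $p^{\tau-j+1}+mp^{\tau-j}$ variables of type $S^j$, and for every intermediate level $l \in \{j, j+1, \dots, \tau-1\}$ we are supplied with $2p-2$ variables of type $E_{\nu_l}^l$, which is precisely the auxiliary data required to carry out the inductive step inside Lemma \ref{fromitoj}. The conclusion yields $p^{\tau-\tau+1}+mp^{\tau-\tau}=p+m$ variables of type $S^\tau$.

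Now I would combine these $p+m$ variables of type $S^\tau$ with the $p-m-1$ variables of type $E_\nu^\tau$ furnished by hypothesis. This is exactly the setup of Lemma \ref{2p-1} at level $l=\tau$ (with its parameter $m$ taken equal to ours), and it produces an $S^{\tau+1}$ variable, which is what we wanted.

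There is no real obstacle here: the statement has been engineered so that the constants $p^{\tau-j+1}+mp^{\tau-j}$, $p+m$ and $p-m-1$ mesh perfectly with the output of Lemma \ref{fromitoj} and the input of Lemma \ref{2p-1}. The only point to double-check is that the parameter $m$ can be used consistently in both lemmata, which it can since in both $0 \le m \le p-1$ is admissible. The lemma is therefore essentially a bookkeeping statement that packages the combined use of these two auxiliaries, so that later sections need not repeat the argument whenever they reach a configuration of this shape.
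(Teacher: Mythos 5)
Your proposal is correct and coincides with the paper's own proof: it applies Lemma \ref{fromitoj} to lift the $S^j$ variables (using the $2p-2$ variables of type $E_{\nu_i}^i$ at each intermediate level) to $p+m$ variables of type $S^\tau$, and then Lemma \ref{2p-1} together with the $p-m-1$ variables of type $E_\nu^\tau$ to obtain the $S^{\tau+1}$ variable. The parameter checks ($m \ge -1$ in Lemma \ref{fromitoj}, $0 \le m \le p-1$ in Lemma \ref{2p-1}) are as you say, so nothing is missing.
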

	\begin{proof}
		One can contract the variables of type $S^j$ and type $E_{\nu_i}^i$ for $i \in \{j , \dots, \tau-1\}$ to $p+m$ variables at level of type $S^{\tau}$ due to Lemma \ref{fromitoj}. Those and the $p-m-1$ variables of type $E_\nu^\tau$ can be contracted to a variable of type $S^{\tau+1}$ using Lemma \ref{2p-1}.
	\end{proof}
    The following lemma focuses on cases, where the number of variables at level $0$ is small.
    \begin{Lemma}
        Let $f, g$ be a proper $p$-normalised pair with $\tau \ge 2$ and $m_0 \le 3p^{\tau+1}-4p^{\tau} - 2p^{\tau-1} +p+3$. Then the equations $f=g=0$ have a non-trivial $p$-adic solution.
    \end{Lemma}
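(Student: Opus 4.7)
Applying Lemma \ref{order} with $j=1$ and the bound $s\ge 2k^2+1$ from the definition of a proper $p$-normalised pair yields $m_0+m_1\ge 4p^{\tau+1}-4p^\tau+1$, so the hypothesis on $m_0$ forces
\begin{align*}
m_1 \ge p^{\tau+1} + 2p^{\tau-1} - p - 2.
\end{align*}
Although level $0$ is lean, level $1$ carries roughly $p^{\tau+1}$ variables, and the strategy is to let level $1$ do most of the lifting, using the level-$0$ variables only as a modest source of $P^1$ or $C^1$ variables.

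Following Section \ref{Strategy}, I split according to whether $I_0^0\ge p^{\tau+1}+p^\tau-1$. If so, Lemma \ref{zeropresenting} says that colour $0$ is zero-representing and it suffices to contract a $C^{\tau+1}$ variable, with the level-$0$ variables of colour $0$ serving as a reservoir of padding. Otherwise $I_0^0<p^{\tau+1}+p^\tau-1$ forces $q_0$ to be correspondingly large, and Lemma \ref{5.1} extracts a comfortable number of pairwise disjoint $P^1$-contractions from the level-$0$ pool; here the target is a $P^{\tau+1}$ variable.

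In either regime the endgame is Lemma \ref{ziel} applied at $j=1$: it demands $p^\tau+mp^{\tau-1}$ variables of type $S^1$, $p-m-1$ variables of type $E_\nu^\tau$, and $2p-2$ auxiliaries of type $E^i$ at each intermediate level $i\in\{1,\dots,\tau-1\}$, where $S\in\{C,P\}$. The level-$1$ supply of type $S^1$ is assembled by merging the primary or colourful variables obtained in the previous step with the abundant $E^1$ variables through Lemmata \ref{2p-1}, \ref{onecolourfulandp-1differentvariablesofthesamecolour} or \ref{liftingonespecialvariablehigher}. The intermediate $E^i$-auxiliaries are produced by iterating Lemma \ref{FS6} (or Lemma \ref{FS6klein}) and Lemma \ref{2ptotau-1}, each of which climbs one level within a single colour class while leaving $2p-2$ variables behind.

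The principal obstacle is the bookkeeping in the primary regime for small $\tau$: after the level-$0$ budget has been spent on $P^1$'s and some $E^1$ variables have been consumed in producing the $E^i$ auxiliaries for $i\ge 2$, one must verify that enough $E^1$ variables of a usable colour survive to feed Lemma \ref{ziel}. The specific threshold $3p^{\tau+1}-4p^\tau-2p^{\tau-1}+p+3$ should emerge as the tight bound from this count. The case $\tau=2$ (where there is essentially no intermediate level to manoeuvre in) will likely need to be handled separately, plausibly by combining inequality \eqref{NichtI_00Grenze} with Lemma \ref{createavariableinm_1-I_0^1} to manufacture an additional $E_{\bar 0}^1$ variable when the colour distribution at level $1$ is too concentrated, in the same spirit as the auxiliary arguments used throughout Section \ref{Pairs of forms with tau=1}.
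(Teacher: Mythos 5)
Your opening reduction is the right one — the bound $m_1\ge p^{\tau+1}+2p^{\tau-1}-p-2$ is exactly what the paper extracts (together with the companion bound $q_1\ge p^{\tau}+2p^{\tau-1}-p-2$, which you never record but which turns out to be essential) — but the endgame you propose does not go through. Lemma \ref{ziel} at $j=1$ demands $p^{\tau}+mp^{\tau-1}$ variables of type $S^1$ with $0\le m\le p-1$, and the only source of primary (or colourful) level-$1$ variables is the level-$0$ pool: Lemma \ref{5.1} guarantees only $\min\left(\lfloor m_0/(2p-1)\rfloor,\lfloor q_0/p\rfloor\right)$ disjoint $P^1$-contractions, and since the hypotheses only give $q_0\ge p^{\tau+1}-p^{\tau}+1$ (your claim that small $I_0^0$ "forces $q_0$ large" is not justified, and in the colourful regime the $C^0$ stock is likewise just the $q_0$ variables), this is about $p^{\tau}-p^{\tau-1}$, already short of the requirement even with $m=0$. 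The merging lemmata you invoke (\ref{2p-1}, \ref{onecolourfulandp-1differentvariablesofthesamecolour}, \ref{liftingonespecialvariablehigher}) raise an $S^l$ variable to an $S^{l+1}$ variable; they consume, and cannot replenish, the $S^1$ supply. A second unresolved point is the other input to Lemma \ref{ziel}: the $p-m-1$ variables at exact level $\tau$ would have to be manufactured from level $1$ via Lemma \ref{hinterlassen}, which needs roughly $2p^{\tau}$ variables of a single colour there, whereas pigeonhole only gives $I_{\mx}^1\ge m_1/(p+1)\approx p^{\tau}-p^{\tau-1}$; choosing $m=p-1$ to avoid this only inflates the $S^1$ demand to $2p^{\tau}-p^{\tau-1}$.

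The paper's proof is uniform (no case split on $I_0^0$, no colourful variables, no Lemma \ref{ziel}) and the ingredient you are missing is precisely the use of the lower bound on $q_1$. After Lemma \ref{5.1} produces $p^{\tau}-p^{\tau-1}$ variables of type $P^1$, Lemma \ref{x-y-Lemmaneu} — which needs many $E^1$ variables with large $q$, but no single-colour hypothesis and no exact-level-$\tau$ variables — lifts them together with the level-$1$ variables to $2p^{\tau-1}+p^{\tau-2}-2$ variables of type $P^2$; Lemma \ref{fromitojwithout2p} then climbs, self-sustainingly and without any auxiliary $E^i$ variables, to $2p-1$ variables of type $P^{\tau}$, and Lemma \ref{p^n=2} finishes. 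In other words, the deficit of roughly $p^{\tau-1}$ primary variables at level $1$ is converted into a surplus at level $2$ by exploiting $q_1$, which is the step your plan lacks and which your proposed bookkeeping around Lemma \ref{ziel} cannot replace.
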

    \begin{proof}
        By the $p$-normalisation of $f, g$, one has $q_0 \ge p^{\tau+1} - p^\tau+1$ and $m_0 \ge 2p^{\tau+1} -2p^{\tau}+1$, from which it follows that one can contract the variables at level $0$ to $p^\tau-p^{\tau-1}$ variables of type $P^1$ due to Lemma \ref{5.1}.
        The upper bound of $m_0$ provides the bounds
        \begin{align*}
            m_1 &\ge 4p^{\tau+1}-4p^\tau+1 - 3p^{\tau+1} + 4p^\tau +2p^{\tau-1}-p-3\\
            &= p^{\tau+1} +2p^{\tau-1} -p-2\ge 2p^{\tau} + 4p^{\tau-1} + p^2 -p-7\\
            &= \left(p^{\tau-1} + 3 \sum_{i=0}^{\tau-2} p^i -1\right) \left(2p-2\right)+ p^2-5p-3
        \end{align*}
        and
        \begin{align*}
            q_1 &\ge 3p^{\tau+1}-3p^\tau+1 - 3p^{\tau+1} + 4p^\tau +2p^{\tau-1}-p-3\\
            &=p^{\tau} +2p^{\tau-1} -p -2=\left(p^{\tau-1} + 3 \sum_{i=0}^{\tau-2} p^i -1\right) \left(p-1\right).
        \end{align*}
        Therefore, there are at least $\left(p^{\tau-1} + 3 \sum_{i=0}^{\tau-2} p^i -1\right) \left(2p-2\right)+ p^2-3p+1$ variables of type $E^1$ from which at least $\left(p^{\tau-1} + 3 \sum_{i=0}^{\tau-2} p^i -1\right) \left(p-1\right)$ are of type $E_{\bar \nu}^1$ for all $0 \le \nu \le p$.
        Those variables can be contracted together with the $P^1$ variables to $2p^{\tau-1}+p^{\tau-2}-2$ variables of type $P^2$ by using Lemma \ref{x-y-Lemmaneu} with $x=p^{\tau-1}-2p^{\tau-2} -3\sum_{i=0}^{\tau-3} p^i -1$, $y=p^{\tau-1} + 3 \sum_{i=0}^{\tau-2} p^i -1$ and $z=p-2$. Then Lemma \ref{fromitojwithout2p} can be used to contract the $P^2$ variables to $2p-1$ variables of type $P^\tau$, which contract to a $P^{\tau+1}$ variable due to Lemma \ref{p^n=2}.
    \end{proof}
    For bigger $m_0$ it will be helpful to divide the cases depending on the value of $r\left(f, g\right)$. The following three lemmata will complete the proof that a for a proper $p$-normalised pair $f, g$ with $\tau \ge2$ and $r=r\left(f,g\right) \ge 0$ the equations $f=g=0$ have a non-trivial $p$-adic solution.
    
    This will be done by using different strategies depending on the size of $m_0$. The area of the value of $m_0$ in which one has to use a certain strategy differs between $p\ge7$ and $p=5$. This is due to some inequalities, which do not hold if $p$ is too small. To counter this, the lemmata that are stronger in the case $p=5$ will be used, which results in the different areas.
    \begin{Lemma}
	    Let $f, g$ be a proper $p$-normalised pair with $\tau \ge 2$, $r =r\left(f, g\right)\ge 0$ and $m_0 \ge 3p^{\tau+1} + 8 p^{\tau}$ for $p \ge 7$ and $m_0 \ge 3p^{\tau+1} +3p^{\tau}$ for $p =5$. Then the equations $f=g=0$ have a non-trivial $p$-adic solution.
    \end{Lemma}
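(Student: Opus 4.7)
The plan is to construct a variable of type $C^{\tau+1}$, from which Lemma \ref{Hensel} yields a non-trivial $p$-adic solution of \eqref{Theoremeq}. A short arithmetic check shows that the hypothesis $m_0 \ge 3p^{\tau+1} + 8p^\tau$ (respectively $3p^{\tau+1} + 3p^\tau$ for $p=5$), combined with $r = r(f,g) \ge 0$, implies
\[
I_0^0 = m_0 - q_0 \ge 2p^{\tau+1} + (7-r)p^\tau + 1 \ge p^{\tau+1} + p^\tau - 1
\]
(and analogously $\ge 2p^{\tau+1} + (2-r)p^\tau + 1 \ge p^{\tau+1} + p^\tau - 1$ for $p=5$), so Lemma \ref{zeropresenting} makes colour $0$ zero-representing; a colourful variable at level $\tau+1$ therefore suffices.

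The target is an application of Lemma \ref{ziel} with $j = 1$, $S = C$, $\nu = 0$ and $\nu_i = 0$ for all intermediate levels. Step~1: extract $p^\tau + m p^{\tau-1}$ variables of type $C^1$ for a suitable $m \in \{0, 1, \dots, r\}$ by applying Lemma \ref{FS2} to the $C^0$ pool, i.e.\ the $q_0$ colourful variables at level $0$; this step leaves the $I_0^0$ variables of type $E_0^0$ essentially untouched. Step~2: feed these $E_0^0$ variables to Lemma \ref{hinterlassen} for $p \ge 7$ (respectively Lemma \ref{hinterlassen5} for $p = 5$) with $j = 0$, $\nu = 0$ and the same $m$, to produce $p - m - 1$ variables of type $E_0^\tau$ together with $2p - 2$ variables of type $E_0^i$ for every $i \in \{0, 1, \dots, \tau - 1\}$. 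Step~3: Lemma \ref{ziel} then assembles the $C^1$ variables, the $E_0^\tau$ variables and the auxiliary $E_0^i$ variables into a variable of type $C^{\tau+1}$, which together with a separate zero-representing contraction gives a non-singular solution.

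The main obstacle is the simultaneous choice of $m$ in Steps~1 and~2. In Step~1 one needs $q_0 \ge p(p^\tau + mp^{\tau-1})$, while in Step~2 the pool $I_0^0$ must exceed the rather intricate lower bound
\[
2p^{\tau+1} + (4 - 2m)p^\tau - \tfrac{p-1}{2}\sum_{i=1}^{\tau-1}p^i + (2m-1)p^{\tau-1} + 3\sum_{i=0}^{\tau-2}p^i - 2p - 2
\]
coming from Lemma \ref{hinterlassen}. The bounds on $m_0$ in the statement are precisely calibrated so that for every admissible $r \in \{0, 1, \dots, p-1\}$ one can choose an $m \le r$ balancing both inequalities; for the boundary case $m = p - 1$ (when no $E_0^\tau$ variable is required) one replaces Lemma \ref{hinterlassen} by Lemma \ref{2ptotau-1}, whose weaker hypothesis easily fits. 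The gap between the $p = 5$ and $p \ge 7$ thresholds reflects the stronger contraction available in Lemma \ref{hinterlassen5}, which in turn inherits the improved $\lceil x/p \rceil$-rate of Lemma \ref{FS6} for $p = 5$ instead of the $\lceil x/(2p-2) \rceil$-rate of Lemma \ref{FS6klein}.
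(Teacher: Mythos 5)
Your opening reduction (colour $0$ is zero-representing, so a $C^{\tau+1}$ variable suffices) and your Step 2 are exactly the paper's, but Step 1 contains a genuine quantitative gap. Lemma \ref{FS2} applied to the $q_0$ colourful level-$0$ variables yields only $\left\lceil \frac{q_0+3}{p}\right\rceil-3$ contractions, so to obtain $p^{\tau}+mp^{\tau-1}$ variables of type $C^1$ you need $q_0 \ge p^{\tau+1}+mp^{\tau}+2p-2$, not merely $q_0\ge p\left(p^{\tau}+mp^{\tau-1}\right)$ as you assert. The definition \eqref{r} only guarantees $q_0\ge p^{\tau+1}+rp^{\tau}$ (the hypothesis of the lemma bounds $m_0$ from below, not $q_0$), so the choice $m=r$ is not available, and for $r=0$ no admissible $m$ exists at all: with $q_0=p^{\tau+1}$, which is perfectly possible here, Lemma \ref{FS2} delivers only $p^{\tau}-2$ variables of type $C^1$, whereas Lemma \ref{ziel} with $j=1$ needs at least $p^{\tau}$ of them. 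Topping up from the $2p-2$ leftover colourful variables via Lemma \ref{2p-1} recovers at most one further $C^1$ variable, still one short, so your claim that the bounds on $m_0$ are calibrated so that some $m\le r$ balances both inequalities is unproven and fails at $r=0$ (for $r\ge1$ one could retreat to $m=r-1$ and re-check the hypothesis of Lemma \ref{hinterlassen}, but you neither do this nor could it rescue $r=0$).

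The paper avoids the loss entirely by taking $m=r$ and invoking Lemma \ref{ziel} already at $j=0$: it feeds in exactly $p^{\tau+1}+rp^{\tau}$ variables of type $C^0$, the $p-r-1$ variables of type $E_0^{\tau}$, and the $2p-2$ variables of type $E_0^{i}$ for \emph{all} $i\in\{0,1,\dots,\tau-1\}$ produced by Lemma \ref{hinterlassen} (resp.\ Lemma \ref{hinterlassen5} for $p=5$). Inside Lemma \ref{ziel} the step from level $0$ to level $1$ is performed by Lemma \ref{fromitoj}, which is lossless precisely because it consumes the $2p-2$ auxiliary $E_0^{0}$ variables (through Lemma \ref{2p-1}) to supply the contractions that Lemma \ref{FS2} drops; in your version these level-$0$ auxiliaries are produced in Step 2 but never used, and that is exactly where your missing $2p-2$ goes. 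No boundary case $m=p-1$ and no recourse to Lemma \ref{2ptotau-1} are needed. Replacing your Steps 1 and 3 by this single application of Lemma \ref{ziel} at $j=0$ with $m=r$ repairs the argument; the $I_0^0$ estimates you state are then sufficient, as in the paper.
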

	\begin{proof}
		As $I^0_0 = m_0-q_0$, one can estimate $I_0^0$ via
		\begin{align*}
		I_0^0 &= m_0-q_0\ge  3 p^{\tau +1}+ 8 p^\tau - p^{\tau+1} -\left( r+1\right)  p^\tau+1= 2 p^{\tau+1} +\left( 7-r \right)  p^{\tau}+1,
		\end{align*}
		for all primes $p \ge 7$, and via
		\begin{align*}
			I_0^0 &= m_0-q_0\ge3 p^{\tau +1}+ 3 p^\tau - p^{\tau+1} -\left( r+1\right)  p^\tau+1= 2 p^{\tau+1} +\left( 2-r \right)  p^{\tau}+1,
		\end{align*}
		for $p=5$.
		Both are at least as big as $p^{\tau+1}+ p^\tau -1$, because $r \le p-1$, from which it follows that the colour $0$ is zero-representing, and hence, it suffices to contract a $C^{\tau+1}$ variable. 
		Furthermore, the lower bound for $I_0^0$ implies that
		\begin{align*}
			I_0^0 \ge 2 p^{\tau+1} +\left(4-2r\right) p^{\tau}+ \left(2r-1\right) p^{\tau-1}  + 3 \sum_{i=0}^{\tau-2} p^i -2p-2
		\end{align*}
		for $p\ge 7$ and
		\begin{align*}
            I_0^0 \ge 3 p^{\tau+1} -r p^{\tau} -3 p^{\tau} - \sum_{i=0}^{\tau-1} p^i -2p +2
		\end{align*}
		for $p=5$.
		Thus, one can contract the $E_0^0$ variables to $p-r-1$ variables of type $E_0^\tau$, using Lemma \ref{hinterlassen} for $p \ge7$ and Lemma \ref{hinterlassen5} for $p=5$, while leaving at least $2p-2$ variables of type $E_0^i$ for all $i \in \{0, 1, \dots, \tau-1\}$. Then one can contract $p^{\tau+1} +r p^\tau$ variables of type $C^0$ together with the $2p-2$ variables of type $E_0^i$ for all $i \in \{0, 1, \dots, \tau-1\}$ and the $E_0^\tau$ variables to a $C^{\tau +1}$ variable due to Lemma \ref{ziel}.
	\end{proof}

	\begin{Lemma}
		Let $f, g$ be a proper $p$-normalised pair $f, g$ with $\tau \ge 2$, $r=r \left(f,g\right)\ge 0$, and $m_0 \ge 3p^{\tau+1}+p^\tau-3$ which has $m_0\le 3 p^{\tau +1}+ 8p^\tau-1$ for $p \ge 7$ and $m_0 \le 3p^{\tau+1} +3p^\tau-1$ for $p=5$. Then the equations $f=g=0$ have a non-trivial $p$-adic solution.
	\end{Lemma}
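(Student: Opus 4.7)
The strategy parallels the preceding lemma but must work with a thinner $E_0^0$ reservoir. The hypothesis $m_0 \ge 3p^{\tau+1}+p^\tau-3$ together with the proper $p$-normalisation bound $q_0 \le 2p^{\tau+1}-2$ gives
\begin{align*}
    I_0^0 = m_0 - q_0 \ge p^{\tau+1}+p^\tau-1,
\end{align*}
so colour $0$ is zero-representing by Lemma \ref{zeropresenting}, and it suffices to construct a single $C^{\tau+1}$ variable. Sharpening with $q_0 \le p^{\tau+1}+(r+1)p^\tau-1$ yields the working bound $I_0^0 \ge 2p^{\tau+1}-rp^\tau-2$, which I plan to exploit throughout.

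The first step is to isolate the extreme case $r = p-1$, where $q_0 \ge 2p^{\tau+1}-p^\tau$ and no $E^\tau$ variable is actually required. Here Lemma \ref{2ptotau-1} applied to the $E_0^0$ pool produces $2p-2$ helpers of type $E_0^i$ at each level $i \in \{0,\dots,\tau-1\}$, Lemma \ref{fromitoj} then lifts $p^{\tau+1}+(p-1)p^\tau$ of the $C^0$ variables through these helpers to $2p-1$ variables of type $C^\tau$, and Lemma \ref{p^n=2} contracts these to a $C^{\tau+1}$. In the main case $r \le p-2$ the plan is to apply Lemma \ref{hinterlassen} (for $p \ge 7$) or Lemma \ref{hinterlassen5} (for $p=5$) with parameter $m=r$ to the $E_0^0$ variables, producing simultaneously the $p-r-1$ variables of type $E_0^\tau$ and the $2p-2$ helpers of type $E_0^i$ at each intermediate level; combining these with the $p^{\tau+1}+rp^\tau$ variables of type $C^0$ via Lemma \ref{ziel} (taking $j=0$, $\nu=0$) then yields the desired $C^{\tau+1}$. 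A direct comparison shows that $I_0^0 \ge 2p^{\tau+1}-rp^\tau-2$ exceeds the Lemma \ref{hinterlassen}/\ref{hinterlassen5} threshold as soon as $r$ is sufficiently large (roughly $r \ge 4$), disposing of most subcases outright.

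The hard part is the tight subregime of small $r$, where $I_0^0$ falls short of the direct Lemma \ref{hinterlassen}/\ref{hinterlassen5} requirement by a number of order $p^\tau$. I expect to close the gap by a subordinate split: either lift a controlled portion of $E_0^0$ to $E_0^1$ via Lemma \ref{FS6} and then produce the missing $E^\tau$ variables at level $1$ by Lemma \ref{FS3} or Lemma \ref{contractiontolevell+1} (which require far fewer inputs when applied one level higher), or invoke the $p$-normalisation inequality $m_0+m_1 \ge 4p^{\tau+1}-4p^\tau+1$ combined with the upper bound on $m_0$ to force a non-trivial $m_1$ that supplies $E^1$ variables which can be lifted into the missing $E^\tau$ slots. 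The delicate accounting, in particular retaining enough $E_0^0$ and $E_0^1$ to serve as helpers for Lemma \ref{fromitoj} after siphoning off the pieces needed for the $E^\tau$ construction, will be the principal technical challenge.
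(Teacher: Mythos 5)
Your reduction to producing a $C^{\tau+1}$ variable, your treatment of $r=p-1$ (where Lemma \ref{ziel} needs no $E^\tau$ input at all), and your observation that the direct route via Lemma \ref{hinterlassen}/\ref{hinterlassen5} plus Lemma \ref{ziel} works once $r$ is large enough all match the paper's opening moves. But the genuine content of this lemma is exactly the ``tight subregime of small $r$'' that you leave as a sketch, and neither of your two sketched ideas closes it. Shifting the work one level up does not help: Lemma \ref{FS6} converts roughly $p$ colour-$0$ level-$0$ variables into one level-$1$ variable, and the threshold of Lemma \ref{hinterlassen} at level $1$ is smaller than at level $0$ by exactly that factor of $p$, so the total demand on $I_0^0$ is unchanged to leading order (and Lemmata \ref{FS3}, \ref{contractiontolevell+1} only climb one level per application, so iterating them is just Lemma \ref{hinterlassen} again). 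Your second idea also fails as stated: the upper bound on $m_0$ does force $m_1$ to be of order $p^{\tau+1}$, but to feed Lemma \ref{hinterlassen} from level $1$ you need a \emph{single colour} with about $2p^\tau$ variables, while pigeonhole only guarantees $I_{\textup{max}}^1\ge m_1/(p+1)\approx p^\tau$; nothing in your proposal bridges that factor of $2$, nor do you ever use the level-$\tau$ variables themselves (the case $I_\nu^\tau\ge p-r-1$) or the levels $2,\dots,\tau-1$.

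The paper's proof resolves precisely this point by a chain of dichotomies that your proposal has no analogue of. For each $j\in\{1,\dots,\tau-1\}$ (and for level $\tau$): either some colour class at level $j$ is large enough to run Lemma \ref{hinterlassen} and finish with Lemma \ref{ziel}, or $I_{\textup{max}}^j$ is bounded; and either $m_j$ and $q_j$ are both large, in which case the colourful variables are lifted to level $j$ by Lemma \ref{fromitoj} and then pushed to $2p^{\tau-j}-p^{\tau-j-1}$ variables of type $C^{j+1}$ via Lemma \ref{x-y-Lemmaneu} and Lemma \ref{2p-1} --- crucially this corresponds to $m=p-1$ in Lemma \ref{ziel}, so \emph{no} $E^\tau$ variables are needed on this route --- or else $m_j$ is bounded above. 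When every branch fails, the accumulated bounds $m_1\le 4p^\tau$, $m_2\le p^\tau$ together with the hypothesised upper bound on $m_0$ give $m_0+m_1+m_2<6p^{\tau+1}-6p^\tau+1$, contradicting Lemma \ref{order}; this contradiction, not a direct construction, is how the small-$r$ case is killed, and it is the only place the upper bound on $m_0$ (the defining feature of this lemma) enters. Since your proposal contains neither the $q_j$-based climbing route that sidesteps the $E^\tau$ requirement nor the final counting contradiction, the hard case remains unproved.
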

	\begin{proof}
        By $q_0\le 2p^{\tau+1}-2$, one obtains
        \begin{align*}
            I_0^0=m_0-q_0 \ge 3p^{\tau+1}+p^\tau-3-2p^{\tau+1}+2 = p^{\tau+1} + p^\tau-1,
        \end{align*}
        from which it follows that the colour $0$ is zero-representing. Therefore, it suffices to contract a $C^{\tau+1}$ variable. The variables of type $E_0^0$ can be contracted with Lemma \ref{2ptotau-1} to $2p-2$ variables of type $E_0^i$ for all $i \in \{ 0, 1, \dots, \tau-1\}$ as
		\begin{align*}
            I_0^0 \ge p^{\tau+1} +p^\tau -1 \ge 4 p^{\tau} -\frac{p-1}{2}\sum_{i=1}^{\tau-1} p^i + 3 \sum_{i=0}^{\tau-2} p^i -2 p -2.
		\end{align*}
		If $I_\nu^\tau \ge p-r-1$ for some $\nu$, then one can contract the $p^{\tau+1} + rp^{\tau}$ variables of type $C^0$ together with the $p-r-1$ variables of type $E_\nu^\tau$ and the $2p-2$ variables of type $E_0^i$ for all $i \in \{0, \dots, \tau-1\}$ to one variable of type $C^{\tau+1}$ with Lemma \ref{ziel}. Thus one can assume that
		\begin{align}
		\label{Z}
			m_\tau \le \left( p-r-2\right) \left( p+1\right) = p^2 -\left( r+1\right) p -r-2\le p^2.
		\end{align}
		Likewise, if $I_\nu^j \ge2 p^{\tau-j+1} +\left(4-2r\right) p^{\tau-j} - \frac{p-1}{2} \sum_{i=1}^{\tau-j-1}p^i + \left( 2r-1\right) p^{\tau-j-1} + 3 \sum_{i=0}^{\tau-j-2} p^i -2p-2$ for some $j \in \{1, \dots, \tau-1\}$ and some $\nu$, one can contract the variables of type $E_\nu^j$ to $p-r-1$ variables of type $E_\nu^\tau$ due to Lemma \ref{hinterlassen}. Then again one can contract the $p^{\tau+1} + rp^{\tau}$ variables of type $C^0$ together with the $p-r-1$ variables of type $E_\nu^\tau$ and the $2p-2$ variables of type $E_0^i$ for all $i \in \{0, \dots, \tau-1\}$ to a $C^{\tau+1}$ variable with Lemma \ref{ziel}. Hence, one can assume that this is not the case, giving the upper bound
		\begin{align}
		\label{upperbound}
            I_\mx^j \le2 p^{\tau-j+1} +\left(4-2r\right) p^{\tau-j} - \frac{p-1}{2} \sum_{i=1}^{\tau-j-1}p^i + \left( 2r-1\right) p^{\tau-j-1} + 3 \sum_{i=0}^{\tau-j-2} p^i -2p-3.
		\end{align}
		If $m_j\ge 2p^{\tau-j+1}- \left( 2r+2\right) p^{\tau-j} +p^2 -3p+2r+1$ and $q_j \ge p^{\tau-j+1}- \left( r+1\right) p^{\tau-j} +r$ for some $j \in \{1, \dots, \tau-1\}$, one can contract $p^{\tau+1}+rp^\tau$ of the $C^0$ variables together with the $2p-2$ variables of type $E_0^i$ for $i \in \{0, \dots, j-1\}$ to $p^{\tau-j+1} + rp^{\tau-j}$ variables of type $C^j$, using Lemma~\ref{fromitoj}. It follows from the lower bounds for $m_j$ and $q_j$, that one can contract the variables of type $E^j$ together with the $p^{\tau-j+1}+rp^{\tau-j}$ variables of type $C^j$ to $2p^{\tau-j}-p^{\tau-j-1}-1$ variables of type $C^{j+1}$, using Lemma \ref{x-y-Lemmaneu} with $x= p^{\tau-j}-p^{\tau-j-1} +r\sum_{i= 0}^{\tau-j-1} p^i$, $y= p^{\tau-j} -r \sum_{i= 0}^{\tau-j-1} p^i$ and $z=r$. This leaves at least $p+r$ of the $C^j$ variables unused. Furthermore, the $2p-2$ variables of type $E_0^j$ which were contracted at the beginning of the proof are unused as well. Hence, Lemma~\ref{2p-1} can be used to contract $p-1$ of them and $p$ of the remaining $C^j$ variables to another $C^{j+1}$ variable. All in all, one has $2p^{\tau-j}-p^{\tau-j-1}$ variables of type $C^{j+1}$ and $2p-2$ variables of type $E_0^i$ for all $i \in \{j+1, \dots, \tau-1\}$ left. By Lemma \ref{ziel}, these variables contract to a $C^{\tau+1}$ variable. One can therefore assume, that either $m_j \le 2p^{\tau-j+1}- \left( 2r+2\right) p^{\tau-j} +p^2 -3p+2r$ or $q_j\le p^{\tau-j+1}- \left( r+1\right) p^{\tau-j} +r-1$ for $j \in \{ 1, \dots, \tau-1\}$.
		It follows that either one has $m_j \le 2p^{\tau-j+1}-\left( 2r+2\right) p^{\tau-j} +p^2-3p+2r$ or for $q_j \le p^{\tau-j+1}-\left( r+1\right) p^{\tau-j} +r-1$ one obtains, due to \eqref{upperbound}, the upper bound
		\begin{align*}
            m_j \le3 p^{\tau-j+1} +\left(3-3r\right) p^{\tau-j} - \frac{p-1}{2} \sum_{i=1}^{\tau-j-1}p^i + \left( 2r-1\right) p^{\tau-j-1} + 3 \sum_{i=0}^{\tau-j-2} p^i -2p+r-4.
		\end{align*}
		Both upper bounds are smaller than $4p^{\tau-j+1}$, thus one can assume, that $m_j\le 4p^{\tau-j+1}$ for $j \in \{1, \dots, \tau-1\}$. It follows that one has $m_1 \le 4p^\tau$ for all $\tau\ge2$ and $m_2 \le 4p^{\tau-1}\le p^\tau$ for $\tau \ge 3$. Furthermore, one has $m_2 \le p^\tau$ for $\tau=2$ due to \eqref{Z}. It follows that
        \begin{align*}
            m_0 + m_1 + m_2 \le 3 p^{\tau +1}+ 13p^\tau -1\le 6p^{\tau+1}-6p^{\tau}
        \end{align*}
        for all $p \ge 7$, whereas one obtains
        \begin{align*}
            m_0 + m_1 + m_2 \le 3p^{\tau+1} +8p^\tau -1\le 6p^{\tau+1} -6p^\tau
        \end{align*}
        for $p=5$. This contradicts the $p$-normalisation of $f, g$, from which the claim follows.
        \end{proof}
        
        \begin{Lemma}
            Let $f,g$ be a proper $p$-normalised pair with $\tau \ge 2$, $r =r\left(f,g\right)\ge 0$ and $3p^{\tau+1}-4p^\tau-2p^{\tau-1}+p+4 \le m_0 \le 3 p^{\tau +1}+  p^\tau -4$. Then the equations $f=g=0$ have a non-trivial $p$-adic solution.
        \end{Lemma}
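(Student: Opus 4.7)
The plan is to exploit the lower bound on $m_0$ together with the $p$-normalisation constraints at higher levels to either construct a $C^{\tau+1}$ variable (when colour $0$ is zero-representing) or a $P^{\tau+1}$ variable. The overall shape mimics the previous two lemmas, but with tighter accounting because $m_0$ is no longer large enough to permit the brute-force argument used in the $m_0 \ge 3p^{\tau+1}+8p^\tau$ regime.

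First I would derive from $q_0 \le p^{\tau+1}+(r+1)p^\tau-1$ the inequality $I_0^0 \ge m_0 - p^{\tau+1}-(r+1)p^\tau+1$ and compare this against the threshold $p^{\tau+1}+p^\tau-1$ needed to invoke Lemma \ref{zeropresenting}. This naturally splits the admissible range of $m_0$. In the zero-representing subcase, the strategy mirrors the earlier argument: apply Lemma \ref{hinterlassen} (or Lemma \ref{hinterlassen5} when $p=5$) to the $E_0^0$ variables to obtain $p-r-1$ variables of type $E_0^\tau$ together with $2p-2$ auxiliary variables of type $E_0^i$ at each intermediate level, and then invoke Lemma \ref{ziel} using $p^{\tau+1}+rp^\tau$ of the $C^0$ variables to build a $C^{\tau+1}$ variable. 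The delicate point is verifying the precise size requirement of Lemma \ref{hinterlassen}; when it fails narrowly, one may have to first extract structure from $I_\nu^j$ at an intermediate level $j$ and route through Lemma \ref{fromitoj}.

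In the remaining subcase, where $I_0^0$ falls below the zero-representing threshold, I would instead aim at a $P^{\tau+1}$ variable. Apply Lemma \ref{5.1} to extract roughly $\lfloor q_0/p\rfloor \ge p^\tau+rp^{\tau-1}$ variables of type $P^1$. The normalisation inequalities $m_0+m_1 \ge 4p^{\tau+1}-4p^\tau+1$ and $m_0+q_1 \ge 3p^{\tau+1}-3p^\tau+1$, combined with the upper bound $m_0 \le 3p^{\tau+1}+p^\tau-4$, force $m_1$ and $q_1$ to be of order $p^{\tau+1}$. Using Lemma \ref{x-y-Lemmaneu} to merge the $P^1$ stock with suitably chosen $E^1$ variables should then yield the number of $P^2$ variables required by Lemma \ref{ziel} at level $2$; if not, Lemmata \ref{fromitoj}, \ref{fromitojwithout2p} and \ref{p^n=2} can be chained to propagate all the way to level $\tau+1$.

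The hardest part will be the book-keeping under the upper bound $m_0 \le 3p^{\tau+1}+p^\tau-4$: whenever a straightforward contraction at some level $j$ fails, one must read off an upper bound $m_j \le \cdots$ (or $I_\mx^j \le \cdots$) and, after summing these contributions across $j=0,1,2$, derive a contradiction with the $p$-normalisation inequality $m_0+m_1+m_2 \ge 6p^{\tau+1}-6p^\tau+1$ from Lemma \ref{order}. This is the same closing device used in the previous lemma but applied in a narrower window, so the calculations will be tight. Additional casework is to be expected for $p=5$ and $\tau \in \{2,3\}$, where Lemmata \ref{FS6} and \ref{hinterlassen5} replace their generic counterparts, and for the sub-range of $r$ close to $p-1$, where the bound on $q_0$ almost saturates $2p^{\tau+1}-2$ and the slack in $I_0^0$ is minimal.
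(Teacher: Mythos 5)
Your outline diverges from the paper's proof and, as it stands, has genuine gaps at exactly the points you defer. The paper never invokes the zero-representing dichotomy in this lemma: since $r\ge 0$ gives $q_0\ge p^{\tau+1}$ and $m_0\ge 2p^{\tau+1}-p^\tau$, Lemma \ref{5.1} already yields $p^\tau$ variables of type $P^1$, and the whole argument is run constructively with $P$-variables, ending in a $P^{\tau+1}$ variable in every case. Your first branch, by contrast, assumes that once colour $0$ is zero-representing one can ``mirror the earlier argument'' via Lemma \ref{hinterlassen} plus Lemma \ref{ziel}; but zero-representing only guarantees $I_0^0\ge p^{\tau+1}+p^\tau-1$, while Lemma \ref{hinterlassen} at level $0$ (with $m=r$, to produce $p-r-1$ variables of type $E_0^\tau$) needs roughly $2p^{\tau+1}+(4-2r)p^\tau-\frac{p-1}{2}\sum_{i=1}^{\tau-1}p^i+\dots$, and in this window $I_0^0\le m_0-q_0\le 2p^{\tau+1}+(1-r)p^\tau-4$ and may be far smaller. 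The shortfall is of order $p^{\tau+1}$, not a narrow miss that a detour through Lemma \ref{fromitoj} can repair; closing that gap is essentially the content of the preceding lemma, which moreover relies on $m_0\ge 3p^{\tau+1}+p^\tau-3$, unavailable here.

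In the second branch the decisive steps are also missing. Failing to find a colour with $I_\nu^1$ large enough for Lemma \ref{hinterlassen} bounds only $I_\mx^1$, not $m_1$ (one still has $m_1\le(p+1)I_\mx^1\approx 2p^{\tau+1}$), so you cannot at that point read off an upper bound for $m_1$ and aim at a contradiction with $m_0+m_1+m_2\ge 6p^{\tau+1}-6p^\tau+1$; the paper instead exploits large $q_1=m_1-I_\mx^1$ constructively via Lemma \ref{x-y-Lemmaneu}, then splits according to whether $m_1\le 2p^2-p-3$ (pass to level $2$, where $p$-normalisation forces $m_2,q_2$ large, with separate treatments for $\tau=2$ via Lemma \ref{neu} and $\tau\ge3$ via Lemmata \ref{x-y-Lemmaneu}, \ref{fromitojwithout2p}, \ref{p^n=2}) or $m_1\ge 2p^2-p-2$ (then some $I_\nu^1\ge 2p-2$, push the $P^1$ stock to $p^{\tau-1}$ variables of type $P^2$ with Lemma \ref{fromitoj}, and finish at level $2$ using a large colour class there, again distinguishing $\tau=2$ from $\tau\ge 3$). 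No contradiction step occurs in the paper's proof of this lemma. A further small inaccuracy: Lemma \ref{5.1} gives $\min\left(\left\lfloor m_0/(2p-1)\right\rfloor,\left\lfloor q_0/p\right\rfloor\right)$ primary variables, so your claim of $p^\tau+rp^{\tau-1}$ of them is not justified for $r$ close to $p-1$ near the lower end of the $m_0$ window; the paper extracts only $p^\tau$, which suffices.
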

        
        \begin{proof}
		By Lemma \ref{5.1}, $r\ge0$ and $m_0 \ge 2p^{\tau+1} -p^{\tau}$, one can contract the $E^0$ variables to $p^{\tau}$ variables of type $P^1$. 
		
		If there is a $\nu$ such that $I_\nu^1 \ge 2 p^{\tau}+ 4 p^{\tau-1} - \frac{p-1}{2} \sum_{i=1}^{\tau-2}p^i - p^{\tau-2} + 3 \sum_{i=0}^{\tau-3} p^i -2p-2$, one can contract the variables of type $E_\nu^1$ with Lemma \ref{hinterlassen} and the resulting variables together with the variables of type $P^1$ to a variable of type $P^{\tau+1}$ with Lemma \ref{ziel}. From now on, one can assume that 
		\begin{align*}
            I_\nu^1 \le  2 p^{\tau}+4p^{\tau-1} - \frac{p-1}{2} \sum_{i=1}^{\tau-2}p^i - p^{\tau-2} + 3 \sum_{i=0}^{\tau-3} p^i -2p-3
        \end{align*}
        for all $\nu$.
		
        If $m_1 \ge 3p^{\tau} + 5p^{\tau-1} - \frac{p-1}{2} \sum_{i=1}^{\tau-2} p^i -p^{\tau-2} + 3 \sum_{i=0}^{\tau-3} p^i-3p-4$, it follows therefore, that
        \begin{align*}
            q_1=m_1-I_\mx^1 \ge p^{\tau}+ p^{\tau-1} - p-1=\left(p^{\tau-1} + 2\sum_{i=0}^{\tau-2} p^i-1\right)\left(p-1\right)
        \end{align*}
        and
        \begin{align*}
            m_1 \ge2p^{\tau} + 2p^{\tau-1} + p^2 -5p-1= \left(p^{\tau-1} + 2\sum_{i=0}^{\tau-2} p^i-1\right)\left(2p-2\right)+ p^2 -3p+1.
        \end{align*}
        Hence, one can use Lemma \ref{x-y-Lemmaneu} with $x=p^{\tau-1} -p^{\tau-2} -2 \sum_{i=0}^{\tau-3} p^i -1$, $y=p^{\tau-1} + 2 \sum_{i=0}^{\tau-2} p^i-1$ and $z=p-1$ to contract the $E^1$ variables together with the $P^1$ variables to obtain $2p^{\tau-1} + p^{\tau-2}-2$ variables of type $P^2$. Then one can contract them to $2p-1$ variables of type $P^{\tau}$ with Lemma~\ref{fromitojwithout2p} and these to one $P^{\tau+1}$ variable with Lemma \ref{p^n=2}. Thus, one can assume, that
        \begin{align*}
            m_1 \le 3p^{\tau} + 5p^{\tau-1} - \frac{p-1}{2} \sum_{i=1}^{\tau-2} p^i -p^{\tau-2} + 3 \sum_{i=0}^{\tau-3} p^i-3p-5.
        \end{align*}
      
        If one has the even stronger upper bound $m_1 \le 2p^2-p-3$, the $p$-normalisation of $f, g$ can be used to obtain the lower bounds
        \begin{align*}
            m_2 &\ge 6p^{\tau+1}-6p^{\tau}+1 -3 p^{\tau +1}- p^\tau +4 -2p^2+p+3\\
            &=3p^{\tau+1} -7p^{\tau}-2p^2+p+8\ge 2p^{\tau-1} + 2p^{\tau-2}+p^2 -3p-3\\
            &=\left(p^{\tau-2} + 2 \sum_{i=0}^{\tau-3} p^i\right) \left(2p-2\right) + p^2-3p+1
        \end{align*}
        and
        \begin{align*}
            q_2 &\ge 5p^{\tau+1}-5p^{\tau}+1 -3 p^{\tau +1}- p^\tau +4 -2p^2+p+3\\
            &=2p^{\tau+1} -6p^{\tau}-2p^2+p+8\ge p^{\tau-1} + p^{\tau-2}-2\\
            &=\left(p^{\tau-2} + 2 \sum_{i=0}^{\tau-3} p^i\right) \left(p-1\right).
        \end{align*}
        One can contract the $P^1$ variables to $p^{\tau-1}-2$ variables of type $P^2$ using Lemma \ref{fromitojwithout2p}. For $\tau=2$ one can use Lemma \ref{neu} to contract one of the $P^2$ variables together with the $E^2$ variables to a $P^3=P^{\tau+1}$ variable, because $p^{2-2} + 2 \sum_{i=0}^{2-3} p^i =1$.
        For $\tau\ge 3$ on the other hand, one can use Lemma \ref{x-y-Lemmaneu} with $x=p^{\tau-2} - p^{\tau-3} - 2\sum_{i=0}^{\tau-4} p^i -1$, $y = p^{\tau-2} + 2 \sum_{i=0}^{\tau-3} p^i$ and $z=p-4$ to contract the $P^2$ variables to $2p^{\tau-2}+ p^{\tau-3}-1$ variables of type $P^3$. Then one can use Lemma \ref{fromitojwithout2p} to contract them to $2p-1$ variables of type $P^{\tau}$ and Lemma \ref{p^n=2} to obtain a $P^{\tau+1}$ variable. One can therefore assume, that $m_1 \ge 2p^2-p-2=\left(2p-3\right)\left(p+1\right) +1$, from which it follows that there is a $\nu$ such that
        \begin{align*}
            I_\nu^1\ge 2p-2.
        \end{align*}

        One can contract the $p^{\tau}$ variables of type $P^1$ together with the $2p-2$ variables of type $E_\nu^1$ to $p^{\tau-1}$ variables of type $P^2$ with Lemma \ref{fromitoj}. The $p$-normalisation of $f, g$ can be used to obtain the lower bound
        \begin{align*}
	        m_2 \ge 6p^3 -6p^2+1-3p^3-p^2+4-3p^2-2p+6=3p^3-10p^2-2p+11 \ge p^2-p-1
        \end{align*}
        for $\tau=2$ and
        \begin{align*}
            m_2 &\ge 6p^{\tau+1}-6p^{\tau}+1 -3 p^{\tau +1}- p^\tau +4 -3p^{\tau} - 5p^{\tau-1} + \frac{p-1}{2} \sum_{i=1}^{\tau-2} p^i +p^{\tau-2} - 3 \sum_{i=0}^{\tau-3} p^i+3p+5\\
            &=3p^{\tau+1} -10 p^{\tau} -5p^{\tau-1} +\frac{p-1}{2} \sum_{i=1}^{\tau-2} p^i + p^{\tau-2} - 3 \sum_{i=0}^{\tau-3} p^i+3p +10\\
            &\ge 2p^{\tau}+ 6p^{\tau-1} + 3 p^{\tau-2} + 2p^{\tau-3} + 6\sum_{i=0}^{\tau-4} p^i\ge\left(p+1\right) \left( 2 p^{\tau-1} +4 p^{\tau-2}- p^{\tau-3} + 3 \sum_{i=0}^{\tau-4} p^i \right).
        \end{align*}
        for $\tau \ge 3$. Thus, there is a $\mu$ with $I_\mu^2\ge p-1$ for $\tau =2$ and a $\mu$ with
        \begin{align*}
            I_\mu^2 \ge 2 p^{\tau-1} +4 p^{\tau-2} - \frac{p-1}{2} \sum_{i=1}^{\tau-3}p^i - p^{\tau-3} + 3 \sum_{i=0}^{\tau-4} p^i -2p-2,
        \end{align*}
        for $\tau \ge 3$. For $\tau=2$, one can contract the $p-1$ variables of type $E_\mu^2$ together with the $p$ variables of type $P^2$ to a $P^3=P^{\tau+1}$ variable with Lemma \ref{2p-1}. If $\tau \ge 3$, one can obtain a $P^{\tau+1}$ by contracting the $E_\mu^2$ variables with Lemma \ref{hinterlassen} and the resulting ones together with the $P^2$ variables with Lemma \ref{ziel}.
    \end{proof}
    This completes the case $r\left(f, g\right) \ge 0$. The following three lemmata will complete the case $\tau \ge 2$ by showing that for every proper $p$-normalised pair $f, g$ with $\tau\ge 2$ and $r\left(f,g\right)=-1$ the equations $f=g=0$ have a non-trivial $p$-adic solution. Here, it is useful to choose strategies depending on the value of $I_0^0$. As for $r\left(f,g\right)\ge0$, some of the bounds will differ for $p=5$ in order to balance that some inequalities only hold for $p\ge7$.
	\begin{Lemma}
		Let $f, g$ be a proper $p$-normalised pair with $\tau \ge 2$, $r=r\left(f,g\right) =-1$, and $I_0^0 \ge 2 p^{\tau+1} +\frac{11}{2} p^{\tau}  - p^{\tau-1} + 3 \sum_{i=0}^{\tau-2} p^i+ 2p^2 -\frac{11}{2}p- 2 $. Then the equations $f=g=0$ have a non-trivial $p$-adic solution. For $p=5$ even $I_0^0\ge 3 p^{\tau+1} - \sum_{i=0}^{\tau} p^i +2p^2-6p+2$ is sufficient.
	\end{Lemma}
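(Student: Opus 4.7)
Since $r(f,g) = -1$, the colourful level $0$ variables satisfy $p^{\tau+1} - p^{\tau} \le q_0 \le p^{\tau+1} - 1$, which falls just short of the $p^{\tau+1}$ threshold needed to feed the $C^0$ pool directly into Lemma~\ref{ziel}. Nevertheless, the hypothesis makes $I_0^0$ far larger than $p^{\tau+1} + p^\tau - 1$, so Lemma~\ref{zeropresenting} guarantees that colour $0$ is zero-representing, and it suffices to contract a single variable of type $C^{\tau+1}$.

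The plan is to produce this $C^{\tau+1}$ by a final application of Lemma~\ref{ziel} at level $j = 1$ with parameter $m = 0$, so the three ingredients required are: a pool of $p^\tau$ variables of type $C^1$, a pool of $p - 1$ variables of type $E_0^\tau$, and pools of $2p - 2$ variables of type $E_0^i$ for each $i \in \{1, \dots, \tau-1\}$. The last two pools are supplied in a single stroke by applying Lemma~\ref{hinterlassen} (or Lemma~\ref{hinterlassen5} when $p = 5$) with $m = 0$ to a carefully sized portion of the $E_0^0$ variables; the numerical threshold of that lemma is precisely what fixes the first summand of the hypothesized lower bound on $I_0^0$.

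The $C^1$ variables come from two sources. First, the $q_0$ variables of type $C^0$ are contracted by Lemma~\ref{FS2} to at least $\lceil (q_0 + 3)/p\rceil - 3 \ge p^\tau - p^{\tau-1} - 2$ variables of type $C^1$. The remaining deficit of order $p^{\tau-1}$ is made up from the $E_0^0$ variables by invoking Lemma~\ref{moreinm_1-I_0^0}: using the colour-nuance split of $E_0^0$ into $E_{00}^0$ and $E_{0\bar 0}^0$, that lemma produces variables of type $E_{\bar 0}^1$, which carry a non-zero colour and therefore qualify as $C^1$ variables. The bound $I_{00}^0 \ge I_0^0/p$ from Lemma~\ref{properties} controls the first colour nuance, while \eqref{NichtI_00Grenze} controls $I_0^0 - I_{00}^0$ in terms of $q_0$ and $m_1 - I_0^1$, so both sides of the split are present in enough abundance to apply Lemma~\ref{moreinm_1-I_0^0} with the required parameter. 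Adding the two yields produces the required $p^\tau$ variables of type $C^1$, and Lemma~\ref{ziel} finishes.

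The main obstacle is the bookkeeping: one must verify that the surplus of $E_0^0$ variables left over after the Lemma~\ref{hinterlassen}/\ref{hinterlassen5} step is sufficient for Lemma~\ref{moreinm_1-I_0^0} to produce the remaining $C^1$ deficit, and that both nuance parameters $I_{00}^0$ and $I_0^0 - I_{00}^0$ clear the thresholds of that lemma. The particular coefficients $\tfrac{11}{2}p^\tau$ and $-\tfrac{11}{2}p$ in the stated bound arise precisely from balancing these two competing demands, and the sharper threshold for $p = 5$ reflects the stronger yield of Lemma~\ref{hinterlassen5} over Lemma~\ref{hinterlassen}. A subcase may be needed if $m_1 - I_0^1$ happens to be large, since then \eqref{NichtI_00Grenze} could make $I_{0\bar 0}^0$ small; but in that event the abundance of $E_{\bar 0}^1$ variables already present at level~$1$ supplies the missing $C^1$ variables directly, so the argument closes in either branch.
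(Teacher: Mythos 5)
There is a genuine gap, and it is conceptual rather than bookkeeping. Your plan needs $p^{\tau}$ variables of type $C^1$ to run Lemma~\ref{ziel} at level $j=1$, but since $r=-1$ forces $q_0\le p^{\tau+1}-1$, the colourful level-$0$ pool can never yield more than roughly $p^{\tau}-p^{\tau-1}$ genuine $C^1$ variables, and your proposed way of covering the deficit does not produce colourful variables at all. A variable of type $E_{\bar 0}^1$ manufactured via Lemma~\ref{moreinm_1-I_0^0} (or Lemma~\ref{createavariableinm_1-I_0^1}) out of $E_{00}^0$ and $E_{0\bar 0}^0$ variables can be traced back only to level-$0$ variables of colour $0$; likewise an original level-$1$ variable of non-zero colour is not at level $0$ and traces back to nothing. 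By the paper's definition, \emph{colourful} means traceable to a level-$0$ variable of colour different from $0$, not merely "of non-zero colour", so neither your main supplement nor your fallback branch ("$E_{\bar 0}^1$ variables already present at level $1$ supply the missing $C^1$ variables directly") gives $C^1$ variables. Feeding such impostors into Lemma~\ref{ziel} voids its guarantee: the resulting level-$(\tau+1)$ variable may trace back only to colour-$0$ and level-$\ge 1$ material, and then the zero-representing strategy produces no non-singular solution.

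The paper's proof shows how the deficit must be handled: it contracts only $p^{\tau+1}-p^{\tau}$ of the $C^0$ variables (with $2p-2$ helper $E_0^0$ variables, via Lemma~\ref{fromitoj}) to exactly $p^{\tau}-p^{\tau-1}$ genuine $C^1$ variables, and it uses the manufactured material from the colour-$0$ pool not as colourful variables but as \emph{helpers}: it builds $p^{\tau-1}+p-2$ variables of type $E_0^1$ (Lemma~\ref{FS3}) and, via Lemma~\ref{createavariableinm_1-I_0^1} together with the $m_1-I_0^1$ existing ones, $p^{\tau-1}+p-2$ variables of type $E_{\bar 0}^1$, and then applies Lemma~\ref{x-y-Lemma} (i.e.\ Lemma~\ref{liftingonespecialvariablehigher}), in which each contraction spends one $C^1$ variable together with $p-1$ variables of type $E_0^1$ and $p-1$ of type $E_{\bar 0}^1$ to yield a $C^2$ variable. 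This preserves the colourful count from level $1$ to level $2$ (reaching $p^{\tau-1}$ variables of type $C^2$ after one extra contraction with Lemma~\ref{2p-1}), and the final application of Lemma~\ref{ziel} is then made at level $2$, not level $1$. Your split of $I_{00}^0$ versus $I_0^0-I_{00}^0$ and your use of Lemmata~\ref{hinterlassen}/\ref{hinterlassen5} do match ingredients of the paper's argument, but without the lifting step above the proof as you outline it does not close.
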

	\begin{proof}
		It is sufficient to contract a $C^{\tau+1}$ variable because $I_0^0 \ge p^{\tau+1} +p^\tau-1$ is given, which implies that the colour $0$ is zero-representing.
       By Lemma \ref{properties}, it follows that 
		\begin{align*}
			I_{00}^0 &\ge \frac{I_{0}^0}{p} \ge  2 p^{\tau} +\frac{11}{2} p^{\tau-1}  - p^{\tau-2} + 3 \sum_{i=0}^{\tau-3} p^i+ 2p -\frac{11}{2}
		\end{align*}
		for $p \ge 5$ and 
		\begin{align*}
			I_{00}^0 \ge 3p^\tau - \sum_{i=0}^{\tau-1} p^i +2p -6,
		\end{align*}
		for $p=5$, which is both bigger than $\left(p-1 \right) \left( p^{\tau-1} +p -2 \right) = p^\tau - p^{\tau-1} + p^2 -3p +2$.
		Furthermore, by \eqref{NichtI_00Grenze}, one obtains
		\begin{align*}
			I_0^0-I_{00}^0 \ge  2p^{\tau+1} -2p^\tau-q_0 - \left( m_1 -I_0^1 \right)\ge p^{\tau+1} -2p^{\tau}+1 -\left(m_1-I_0^1\right),
		\end{align*}
		as $q_0 \le p^{\tau+1}-1$ due to $r=-1$.
		This is bigger than $p^{\tau-1} + p-2 -\left( m_1 -I_0^1 \right)$, therefore, one can take $p^{\tau-1} + p-2 -\left( m_1 -I_0^1 \right)$ sets containing one variable of type $E_{0\bar0}^0$ and  $p-1$ variables of type $E_{00}^0$. By Lemma \ref{createavariableinm_1-I_0^1}, each of this set contains a contraction to a $E_{\bar 0}^1$ variable. For $p \ge 5$ there are at least 
		\begin{align*}
			2 p^{\tau+1} +5 p^{\tau} - \frac{p-1}{2} \sum_{i=1}^{\tau-1}p^i - p^{\tau-1} + 3 \sum_{i=0}^{\tau-2} p^i+ p^2 - 4p-2 
		\end{align*}
		and for $p=5$ at least
		\begin{align*}
			3 p^{\tau+1} -2 p^{\tau} - \sum_{i=0}^{\tau-1} p^i+p^2 -4p+2
        \end{align*}
		variables of type $E_0^0$ left, which is both at least as big as
		\begin{align*}
			p^{\tau}+4p^2-6p+1=p \left( p^{\tau-1} +p-2\right)+3p^2-4p + 1.
		\end{align*}
		As long as there are at least $p\left(3p-3\right)+1=3p^2-3p+1$ variables of type $E_0^0$ left, one has at least $3p-2$ variables of type $E_{0 \mu}^0$ for some $\mu$.
		Therefore, one can use Lemma \ref{FS3} to contract $p^\tau + p^2-2p$ of the $E_0^0$ variables to $p^{\tau-1} + p-2 $ variables of type $E_0^1$, using each time $p$ variables of the same colour nuance. Now, one has $p^{\tau-1} +p-2$ variables of type $E_0^1$ and $p^{\tau-1}+p-2$ variables of type $E_{\bar 0}^1$. This leaves at least
		\begin{align*}
		2 p^{\tau+1} +4 p^{\tau} - \frac{p-1}{2} \sum_{i=1}^{\tau-1}p^i - p^{\tau-1} + 3 \sum_{i=0}^{\tau-2} p^i -2p- 2
		\end{align*}
		 variables of type $E_0^0$ for $p\ge 5$ and 
		 \begin{align*}
		 3 p^{\tau+1} -3 p^{\tau} - \sum_{i=0}^{\tau-1} p^i -2p+2
		 \end{align*}
		  for $p=5$ remaining. 
		Use Lemma~\ref{hinterlassen} for $p\ge5$ and Lemma \ref{hinterlassen5} for $p=5$ to contract the $E_0^0$ variables to $p-1$ variables of type $E_0^\tau$ and $2p-2$ variables of type $E_0^i$ for all $i \in \{0,1, \dots, \tau-1\}$. With Lemma~\ref{fromitoj}, one can contract $p^{\tau+1}-p^{\tau}$ of the variables of type $C^0$ and the $2p-2$ variables of type $E_0^0$ to $p^{\tau}-p^{\tau-1}$ variables of type $C^1$.
		Use Lemma \ref{x-y-Lemma} with $x= p^{\tau-1} - \sum_{i = 0}^{\tau-2} p^i -1$, $y = \sum_{i=0}^{\tau-2} p^i +1$ and $z= p-2$ to contract $p^{\tau-1}+p-2$ variables of type $E_0^1$ and $p^{\tau-1}+p-2$ variables of type $E_{\bar 0}^1$ together with the $C^1$ variables to $p^{\tau-1}-1$ variables of type $C^{2}$, without using $2p-2\ge p$ of the $C^1$ variables. The $2p-2 \ge p-1$ variables of type $E_0^1$ which where contracted while the $p-1$ variables of type $E_0^\tau$ were contracted are also unused. One can contract $p-1$ of them together with $p$ of the remaining $C^1$ variables to an additional $C^{2}$ variable using Lemma~\ref{2p-1}. This gives a total of $p^{\tau-1}$ variables of type $C^{2}$. Then one can contract the $C^{2}$ variables with the $E_0^i$ variables for $i \in\{2, \dots, \tau-1\}$ and the $E_0^\tau$ variables to a $C^{\tau+1}$ variable due to Lemma \ref{ziel}.
	\end{proof}
	\begin{Lemma}
		Let $f, g$ be a proper $p$-normalised pair with $\tau \ge 2$, $r=r\left(f,g\right)=-1$ and $p^{\tau +1}+ p^\tau -1 \le I_0^0 \le  2 p^{\tau+1} +\frac{11}{2} p^{\tau}  - p^{\tau-1} + 3 \sum_{i=0}^{\tau-2} p^i+2p^2 -\frac{11}{2} p-3$ for $p\ge 7$ and $p^{\tau+1}+p^\tau-1\le I_0^0 \le 3 p^{\tau+1} - \sum_{i=0}^{\tau} p^i +2p^2-6p+1$ for $p=5$. Then the equations $f=g=0$ have a non-trivial $p$-adic solution.
	\end{Lemma}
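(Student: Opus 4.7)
The bound $I_0^0 \ge p^{\tau+1}+p^\tau-1$ together with Lemma \ref{zeropresenting} ensures that colour $0$ is zero-representing, so it suffices to produce a single variable of type $C^{\tau+1}$. The first move is to apply Lemma \ref{2ptotau-1}, which is admissible because the hypothesis $I_0^0 \ge p^{\tau+1}+p^\tau-1$ is more than the threshold $4p^\tau - \frac{p-1}{2}\sum_{i=1}^{\tau-1}p^i + 3\sum_{i=0}^{\tau-2}p^i - 2p - 2$; this reserves $2p-2$ variables of type $E_0^i$ simultaneously for every $i \in \{0,1,\ldots,\tau-1\}$. Since $r=-1$ gives $q_0 \ge p^{\tau+1}-p^\tau+1$, Lemma \ref{fromitoj} with $m=-1$ and $j=1$ then contracts $p^{\tau+1}-p^\tau$ of the $C^0$ variables together with the reserved $E_0^0$ variables into $p^\tau - p^{\tau-1}$ variables of type $C^1$ (and more generally these $C^0$ variables can be pushed up to $p^{\tau-j+1}-p^{\tau-j}$ variables of type $C^j$ at any level $j\le\tau$ using the $E_0^i$ reserves).

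The upper bound on $I_0^0$, combined with $q_0\le p^{\tau+1}-1$, bounds $m_0$ from above, and then the $p$-normalisation inequality $m_0+m_1\ge 4p^{\tau+1}-4p^\tau+1$ forces $m_1$ to be of order $p^{\tau+1}$ (with, for small $\tau$, further consequences for $m_2$). The plan is to split into two sub-cases. If some colour $\nu$ has $I_\nu^1$ large enough to trigger Lemma \ref{hinterlassen} (or Lemma \ref{hinterlassen5} when $p=5$), then these $E_\nu^1$ variables will be contracted down to $p-1$ variables of type $E_\nu^\tau$, and Lemma \ref{ziel} fed with the $C^1$ chain and the remaining $E_0^i$ reserves for $i\in\{1,\ldots,\tau-1\}$ produces a $C^{\tau+1}$ variable. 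Otherwise $q_1$ is comparatively large, and one produces further $C^1$ variables from a second batch of $C^0$ variables via the $E_0^i$ reserves, then applies Lemma \ref{x-y-Lemmaneu} to pair the level-$1$ colour variation with these $C^1$ variables, pushing the chain up to $C^\tau$ and finishing with Lemma \ref{2p-1} against a remaining reserve.

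The main obstacle will be verifying, right at the boundary value of $I_0^0$, that the forced lower bound on $m_1$ (and on the colour distribution at level $1$) is just strong enough to feed either Lemma \ref{hinterlassen} or Lemma \ref{x-y-Lemmaneu}; the estimates are tight, which is why the upper bound on $I_0^0$ in the statement is so intricate. The correction terms that distinguish the $p=5$ bound from the $p\ge 7$ bound are present specifically to compensate for the weaker efficiency of Lemma \ref{FS6} at $p=5$, so the $p=5$ sub-case will be handled by a parallel but arithmetically distinct calculation using Lemma \ref{hinterlassen5} in place of Lemma \ref{hinterlassen}. Once a level-$\tau$ variable of appropriate colour has been created, the remaining lift to level $\tau+1$ is routine via Lemma \ref{ziel}.
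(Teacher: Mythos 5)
Your opening moves (colour $0$ is zero-representing, reserve $2p-2$ variables of type $E_0^i$ at each level via Lemma \ref{2ptotau-1}, and lift $p^{\tau+1}-p^{\tau}$ colourful variables to $p^{\tau}-p^{\tau-1}$ variables of type $C^1$ via Lemma \ref{fromitoj}) do match the skeleton of the paper's argument. But the central dichotomy you rely on is not valid, and this is a genuine gap. You claim that the upper bound on $m_0$ together with $m_0+m_1\ge 4p^{\tau+1}-4p^{\tau}+1$ forces $m_1$ to be of order $p^{\tau+1}$; with the stated bound $m_0\le 3p^{\tau+1}+\tfrac{11}{2}p^{\tau}+\dots$ this gives $m_1\gtrsim p^{\tau+1}-\tfrac{19}{2}p^{\tau}$, which is vacuous for $p=5$ and $p=7$, and even where it is positive it only bounds $q_1+I_{\mx}^1$. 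Consequently "either some $I_\nu^1$ is large enough for Lemma \ref{hinterlassen}, or $q_1$ is comparatively large" is false: $m_1$ itself may be too small for either alternative, and that middle regime is exactly where the paper has to work hardest. There the paper caps $I_{\mx}^1$ and $m_1$, re-derives a lower bound for $I_0^0$, uses $I_{00}^0$ together with Lemmata \ref{createavariableinm_1-I_0^1} and \ref{moreinm_1-I_0^0} to manufacture $E_{\bar 0}^1$ variables out of the colour-nuance structure of the $E_0^0$ variables (this manufacture step, needed because $q_1$ alone cannot feed Lemma \ref{x-y-Lemma}, is absent from your plan), climbs to $p^{\tau-1}$ variables of type $C^2$, and then runs a further case analysis at levels $2$ and $3$: $\tau=2$ via a lower bound on $m_2$ and Lemma \ref{2p-1}; $\tau\ge3$ via a cap on $I_{\mx}^2$, Lemma \ref{fromitojwithout2p}, and lower bounds on $q_3,m_3$ feeding Lemma \ref{neu} or Lemma \ref{x-y-Lemmaneu}. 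None of this appears in your proposal, and without it the proof does not close.

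A secondary numerical problem sits inside your first branch. With only $p^{\tau}-p^{\tau-1}$ variables of type $C^1$, Lemma \ref{ziel} must be invoked with $m=-1$, which demands $p$ (not $p-1$) variables of type $E_\nu^{\tau}$; producing these by Lemma \ref{hinterlassen} with $m=-1$ raises the required size of $I_\nu^1$ to roughly $2p^{\tau}+6p^{\tau-1}$, far above the threshold $p^{\tau}+4p^{\tau-1}-\dots$ that the paper uses (the paper instead spends only $p^{\tau}+p-2$ of the $E_\nu^1$ variables, pairing them through Lemma \ref{x-y-Lemma} with the same number of manufactured $E_{\bar\nu}^1$ variables). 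Your higher threshold enlarges precisely the uncovered middle case described above, so the gap cannot be repaired by tightening constants elsewhere; you need the colour-nuance manufacturing step and the level-$2$/level-$3$ analysis.
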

	\begin{proof}
        It follows from $r=-1$ and the restrictions on $I_0^0$ that
        \begin{align}
        \label{XX}
			m_0 \le 3 p^{\tau+1} +\frac{11}{2} p^{\tau}  -p^{\tau-1} + 3 \sum_{i=0}^{\tau-2} p^i+2p^2 -\frac{11}{2} p-4,
		\end{align}
		for $p\ge 7$, whereas one can obtain for $p=5$ the even better bound
		\begin{align}
		\label{IX}
			m_0 \le 4 p^{\tau+1} - \sum_{i=0}^{\tau} p^i +2p^2-6p.
		\end{align}
		As $I_0^0 \ge  p^{\tau+1} + p^{\tau} -1$ the colour $0$ is zero-representing, hence, it suffices to show that one can contract a $C^{\tau+1}$ variable. Due to the $p$-normalisation of $f, g$ and $r=-1$, one has the lower bound
        \begin{align*}
			I_0^0 \ge 3 p^{\tau+1} - 3 p^{\tau} +1 - q_0 -q_1 \ge 2 p^{\tau+1} - 3 p^\tau +2 -q_1
		\end{align*}
		as well.
		
		Assume first, that $I_\nu^1=I_\mx^1 \ge p^\tau+ 4 p^{\tau-1} - \frac{p-1}{2} \sum_{i=1}^{\tau-2} p^i+ 3 \sum_{i=0}^{\tau-3} p^i -p -4$. Then one can make sure, that additionally, one has $p^{\tau}+p-2$ variables of type $E_{\bar \nu}^1$ by contracting the $E_0^0$ variables to at least $p^{\tau}+p-2-q_1$ variables of type $E_{\bar \nu}^1$ as described in the following paragraph.
		
		One can assume that $q_1 \le p^\tau+p-3$, because else, there is nothing to be done.
		If $\nu\neq 0$, one can contract the variables of type $E_0^0$ to
		\begin{align*}
		\left \lceil \frac{ 2 p^{\tau+1} - 3 p^\tau +2 -q_1}{p} \right \rceil -2p + \frac{p-3}{2} \ge 2p^\tau - 3 p^{\tau-1} + \frac{2-q_1}{p} -2p+ \frac{p-3}{2} 
		\end{align*}
		variables of type $E_0^1$ with Lemma \ref{FS6} for $p\ge 7$, which is at least as big as $p^{\tau}+p-2-q_1$ for $p \ge 7$ and to contract
		\begin{align*}
			\left \lceil \frac{ 2 p^{\tau+1} - 3 p^\tau +2 -q_1}{p} \right \rceil -2p + 3 \ge 2p^\tau - 3 p^{\tau-1} + \frac{2-q_1}{p} -2p+3 \ge p^\tau + p-2 -q_1
		\end{align*}
		variables of type $E_0^1$ with Lemma \ref{FS6} for $p=5$. This leaves $6p-9\ge 2p-2$ variables of type $E_0^0$ unused in both cases.
		If on the other hand, one has $\nu=0$, it follows that
		\begin{align*}
		I_{00}^0 \ge \frac{I_0^0}{p} \ge 2p^\tau -3p^{\tau-1} + \frac{2-q_1}{p}\ge p^\tau+p-2-q_1
		\end{align*}
		and by $m_1-I_0^1=q_1$ and \eqref{NichtI_00Grenze} that
		\begin{align*}
		I_0^0 - I_{00}^0 \ge  2p^{\tau+1}-2p^\tau -q_0 -q_1 \ge p^{\tau+1} - 2p^\tau +1-q_1\ge p^{\tau} +p-2-q_1.
		\end{align*}
		Furthermore, one has
		\begin{align*}
            I_0^0 \ge 2p^{\tau+1} -3p^{\tau}+2-q_1 \ge p^{\tau+1} + 2p^2 -5p-q_1 p +3=p \left(p^{\tau} +p -2 -q_1\right) + p^2-3p+3.
		\end{align*}
		Thus one can contract $p^{\tau+1}+p^2-2p-q_1p$ of the $E_0^0$ variables to $p^{\tau}+ p -2-q_1$ variables of type $E_{\bar 0}^1$ due to Lemma \ref{moreinm_1-I_0^0}, leaving at least $p^{\tau+1} -3p^{\tau}-p^2+2p+2+\left(p-1\right)q_1\ge 2p-2$ variables of type $E_0^0$ unused.
		
		In both cases, one has contracted enough $E_{\bar \nu}^1$ variables to have at least $p^{\tau}+p-2$ variables of type $E_{\bar \nu}^1$, while there are $2p-2$ variables of type $E_0^0$ remaining. The $E_0^0$ variables can be contracted together with $p^{\tau+1}-p^\tau$ of the $C^0$ variables to $ p^{\tau} - p^{\tau-1}$ variables of type $C^1$, using Lemma \ref{fromitoj}. 
		Then, one can contract $4 p^{\tau-1} - \frac{p-1}{2} \sum_{i=1}^{\tau-2} p^i+ 3 \sum_{i=0}^{\tau-3} p^i -2p -2$ of the variables of type $E_\nu^1$ with Lemma \ref{2ptotau-1} to $2p-2$ variables of type $E_\nu^j$ for all $j \in \{1, \dots, \tau-1\}$. The remaining $p^\tau+p-2$ variables of type $E_\nu^1$ together with the $p^{\tau}+p-2$ variables of type $E_{\bar \nu}^1$ and the $C^1$ variables can be contract, using Lemma \ref{x-y-Lemma} with $x=p^{\tau-1} -2p^{\tau-2} - \sum_{i=0}^{\tau-3} p^i -1$, $y= \sum_{i=0}^{\tau-1} p^i+1$ and $z=p-2$, to $2p^{\tau-1}-p^{\tau-2}$ variables of type $C^{2}$.
		With Lemma \ref{ziel} those and the $2p-2$ variables in $E_\nu^j$ for $j \in \{2, \dots, \tau-1\}$ can be contracted to a $C^{\tau+1}$ variable.
        Thus, from now on, one can assume, that
        \begin{align}
        \label{X}
            I_\mx^1 \le p^\tau+ 4 p^{\tau-1} - \frac{p-1}{2} \sum_{i=1}^{\tau-2} p^i+ 3 \sum_{i=0}^{\tau-3} p^i-p -5.
        \end{align}
		If $q_1 \ge p^\tau + p -2= \left(\sum_{i=0}^{\tau-1} p^i +1\right) \left( p-1\right)$ and $m_1 \ge 2p^\tau +p^2-p -3= \left(\sum_{i=0}^{\tau-1} p^i +1\right) \left( 2p-2\right)+ p^2 -3p +1$, one can use Lemma \ref{2ptotau-1} to contract the $E_0^0$ variables to $2p-2$ variables of type $E_0^i$ for all $i \in \{ 0, \dots, \tau-1\}$ because $I_0^0 \ge p^{\tau+1} + p^\tau-1 \ge 4 p^{\tau} - \frac{p-1}{2} \sum_{i=1}^{\tau-1} p^i+ 3 \sum_{i=0}^{\tau-2} p^i -2p -2 $. By Lemma~\ref{fromitoj}, the $p^{\tau+1}-p^\tau$ variables of type $C^0$ can be contracted together with the $2p-2$ variables of type $E_0^0$ to $p^{\tau}-p^{\tau-1}$ variables of type $C^1$. Using Lemma \ref{x-y-Lemmaneu} with $x=p^{\tau-1} -2p^{\tau-2} -\sum_{i=0}^{\tau-3} p^i-1$, $y=  \sum_{i=0}^{\tau-1} p^i +1$ and $z=p-2$, one can contract the $E^1$ variables together with the $C^1$ variables to $2p^{\tau-1} - p^{\tau-2}$ variables of type $C^2$, which contract together with the $2p-2$ variables of type $E_0^i$ for $i \in \{2, \dots, \tau-1\}$ to a $C^{\tau+1}$ variables due to Lemma \ref{ziel}.
        Therefore, one can assume that either $m_1 \le 2p^\tau +p^2-p -4$ or $q_1 \le p^\tau + p -3$. The latter case leads to $m_1 = q_1+I_\mx^1 \le 2p^\tau+ 4 p^{\tau-1} - \frac{p-1}{2} \sum_{i=1}^{\tau-2} p^i+ 3 \sum_{i=0}^{\tau-3} p^i -8$ due to \eqref{X}. Hence, from now on, one can assume that
        \begin{align}
        \label{XXX}
            m_1 \le 2p^\tau+ 4 p^{\tau-1} - \frac{p-1}{2} \sum_{i=1}^{\tau-2} p^i+ 3 \sum_{i=0}^{\tau-3} p^i+p^2  -8,
        \end{align}
        because this is an upper bound for the upper bound for $m_1$ in both cases.
        
        By the $p$-normalisation of $f, g$, it follows that
        \begin{align}
        \label{I_0}
            I_0^0 \ge 4p^{\tau+1}-4p^{\tau} +1 -q_0 -m_1 \ge 3p^{\tau+1}-6p^{\tau}- 4 p^{\tau-1} - 3 \sum_{i=0}^{\tau-3} p^i-p^2+10.
        \end{align}
        Therefore, one has $I_{00}^0 \ge 3p^{\tau}-6p^{\tau-1}-4p^{\tau-2}-3\sum_{i=0}^{\tau-4} p^i -p  \ge p^{\tau-1}+2p-3$ and, due to \eqref{NichtI_00Grenze}, it follows that
        \begin{align*}
            I_0^0-I_{00}^0 &\ge 2p^{\tau+1} -2p^{\tau} -q_0 -\left(m_1-I_0^1\right) \ge p^{\tau+1} -2p^{\tau} +1 -\left(m_1-I_0^1\right) \\ 
            &\ge p^{\tau-1} + 2p-3 -\left(m_1-I_0^1\right).
        \end{align*}
        It follows from \eqref{I_0} that $I_0^0 \ge p^{\tau} + 2p^2-3p -p\left(m_1-I_0^1\right)+ p^2-3p+3$, thus, if $m_1-I_0^1 \le p^{\tau-1}+2p-3$, one can contract $p^{\tau} + 2p^2-3p -p\left(m_1-I_0^1\right)$ of the $E_0^0$ variables to $p^{\tau-1} + 2p-3-\left(m_1-I_0^1\right)$ variables of type $E_{\bar 0}^1$ with Lemma \ref{moreinm_1-I_0^0}. There are at least $3p^{\tau+1}-7p^{\tau}- 4 p^{\tau-1} - 3 \sum_{i=0}^{\tau-3} p^i-3p^2+3p+10$ variables of type $E_0^0$ remaining, which contract to
        \begin{align*}
            \left \lceil  \frac{3p^{\tau+1}-7p^{\tau}- 4 p^{\tau-1} - 3 \sum_{i=0}^{\tau-3} p^i-3p^2+3p+10}{p} \right \rceil -2p+ \frac{p-3}{2} \\
            \ge 3p^{\tau}-7p^{\tau-1}- 4 p^{\tau-2} - 3 \sum_{i=0}^{\tau-4} p^i-5p+\frac{p-3}{2}+4
        \end{align*}
        variables of type $E_0^1$ with Lemma \ref{FS6}, while leaving at least $6p-9\ge2p-2$ variables of type $E_0^0$ unused. This is at least as big as $p^{\tau-1}+2p-3$. Thus, one has at least $p^{\tau-1} +2p-3$ variables of type $E_0^1$, as well as a total of $p^{\tau-1} +2p-3$ variables of type $E_{\bar 0}^1$.
        
       By Lemma \ref{fromitoj}, one can contract $p^{\tau+1} -p^{\tau}$ of the $C^0$ variables with the remaining $2p-2$ variables of type $E_0^0$ to $p^{\tau}-p^{\tau-1}$ variables of type $C^1$ and then use Lemma \ref{x-y-Lemma} with $x=p^{\tau-1} -\sum_{i=0}^{\tau-2} p^i -1$, $y=\sum_{i=0}^{\tau-2} p^i+2$ and $z=p-3$ to contract them together with the $E^1$ variables to $p^{\tau-1}$ variables of type $C^2$. 
        
        For $\tau=2$ it follows for $p \ge 7$ due to \eqref{XX} and \eqref{XXX}, that
        \begin{align*}
            m_2 \ge 3 p^{3} -\frac{33}{2} p^{2}  +\frac{5}{2} p+10\ge p^2-p-1=\left(p-2\right)\left(p+1\right) +1,
        \end{align*}
        and for $p=5$ due to \eqref{IX} and \eqref{XXX}, that
        \begin{align*}
            m_2 \ge 2p^{3} -10 p^{2}  +3 p+10\ge p^2-p-1=\left(p-2\right)\left(p+1\right) +1.
        \end{align*}
        Therefore, one has a $\mu$ with $I_\mu^2 \ge p-1$, from which it follows that one can contract the $p$ variables of type $C^2$ and the $p-1$ variables of type $E_\mu^2$ to a $C^3=C^{\tau+1}$ variable due to Lemma~\ref{2p-1}. Thus, from now on one can assume, that $\tau \ge 3$.
        
        If $I_\mu^2=I_\mx^2 \ge2 p^{\tau-1} +4 p^{\tau-2} - \frac{p-1}{2} \sum_{i=1}^{\tau-3}p^i -p^{\tau-3} + 3 \sum_{i=0}^{\tau-4} p^i -2p-2$, one can use Lemma \ref{hinterlassen} to contract the $E_\mu^2$ variables to $p-1$ variables of type $E_\mu^{\tau}$ and $2p-2$ variables of type $E_{\mu}^i$ for all $i \in\{2, \dots, \tau-1\}$. It follows that one can contract them together with the $C^2$ variables to a $C^{\tau+1}$ variable due to Lemma \ref{ziel}. From now on, one can assume, that
        \begin{align*}
            I_\mx^2 \le2 p^{\tau-1} +4 p^{\tau-2} - \frac{p-1}{2} \sum_{i=1}^{\tau-3}p^i -p^{\tau-3} + 3 \sum_{i=0}^{\tau-4} p^i -2p-3,
        \end{align*}
        and therefore
        \begin{align}
        \label{III}
            m_2 \le 2p^{\tau} +6 p^{\tau-1} +3 p^{\tau-2} +2p^{\tau-3} + 6 \sum_{i=0}^{\tau-4} p^i -2p^2-5p-3.
        \end{align}
        Then, one can contract the $p^{\tau-1}$ variables of type $C^2$ to $p^{\tau-2}-2$ variables of type $C^3$ using Lemma \ref{fromitojwithout2p}.
        Due to \eqref{XX}, \eqref{XXX} and \eqref{III}, it follows that
        \begin{align*}
            m_0 + m_1+ m_2 \le 3 p^{\tau+1} +\frac{19}{2} p^{\tau}  +\frac{17}{2}p^{\tau-1} + 6p^{\tau-2} +8 p^{\tau-3} +12 \sum_{i=0}^{\tau-4} p^i+p^2 -10p-15, 
        \end{align*}
        which does not only hold for $p \ge7$ but also for $p=5$ because the upper bound \eqref{XX} is in the case $p=5$ bigger than the upper bound \eqref{IX}. This leads to
        \begin{align*}
            q_3 \ge 4p^{\tau+1} -\frac{33}{2} p^{\tau} -\frac{17}{2}p^{\tau-1} - 6p^{\tau-2}-8p^{\tau-3} -12 \sum_{i=0}^{\tau-4} p^i-p^2 +10 p+16\ge p^{\tau-2}+p^{\tau-3}-2
        \end{align*}
        and 
        \begin{align*}
            m_3 &\ge 5p^{\tau+1} -\frac{35}{2} p^{\tau} -\frac{17}{2}p^{\tau-1} - 6p^{\tau-2}-8p^{\tau-3} -12 \sum_{i=0}^{\tau-4} p^i-p^2 +10 p+16\\
            &\ge 2p^{\tau-2}+2p^{\tau-3}+p^2 -3p-3. 
        \end{align*}
        For $\tau=3$ one can contract one of the $C^3$ variables together with the $E^3$ variables to a $C^4$ variable using Lemma \ref{neu} with $x=1$. For $\tau\ge4$ the $C^3$ variables can be contracted with the $E^3$ variables, using Lemma \ref{x-y-Lemmaneu} with $x=p^{\tau-3}-p^{\tau-4}-2\sum_{i=0}^{\tau-5} p^i-1$, $y=p^{\tau-3} + 2 \sum_{i=0}^{\tau-4} p^i$ and $z=p-4$, to $2p^{\tau-3}+p^{\tau-4}-1$ variables of type $C^4$. Then one can use Lemma \ref{fromitojwithout2p} to contract them to $2p-1$ variables of type $C^{\tau}$ and then Lemma \ref{p^n=2} to contract them to a $C^{\tau+1}$ variable.
	\end{proof}
	\begin{Lemma}
		Let $f, g$ be a proper $p$-normalised pair with $\tau \ge 2$, $r=r\left(f,g\right)=-1$ and $I_0^0 \le p^{\tau+1} + p^\tau -2$. Then the equations $f=g=0$ have a non-trivial $p$-adic solution.
	\end{Lemma}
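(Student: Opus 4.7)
The plan is to construct a variable of type $P^{\tau+1}$, since under the present hypothesis neither colour $0$ nor any other colour can be guaranteed zero-representing: colour $0$ fails because $I_0^0 \le p^{\tau+1}+p^\tau-2$, and any colour $\nu \neq 0$ has $I_\nu^0 \le q_0 \le p^{\tau+1}-1$ by $r=-1$, so Lemma \ref{zeropresenting} does not apply. Combining $q_0 \le p^{\tau+1}-1$ with $I_0^0 = m_0-q_0 \le p^{\tau+1}+p^\tau-2$ gives $m_0 \le 2p^{\tau+1}+p^\tau-3$, and the $p$-normalisation bound $m_0+m_1 \ge 4p^{\tau+1}-4p^\tau+1$ then forces $m_1 \ge 2p^{\tau+1}-5p^\tau+4$. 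So variables at level $1$ are abundant, of size roughly $2p^{\tau+1}$, and the strategy is to exploit this surplus.

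First I would apply Lemma \ref{5.1} to the $E^0$ variables to produce at least $\lfloor q_0/p\rfloor \ge p^\tau - p^{\tau-1}$ variables of type $P^1$. This is a factor of order $p^{\tau-1}$ short of the $p^\tau$ variables of type $P^1$ needed to apply Lemma \ref{ziel} with $j=1$, $m=0$ and reach $P^{\tau+1}$ along a single chain. The idea is to close the gap using the large pool of $E^1$ variables: apply Lemma \ref{x-y-Lemmaneu} at level $1$ with $S=P$ to combine a block of $E^1$ variables with the $P^1$ variables and obtain roughly $2p^{\tau-1}$ variables of type $P^2$; then Lemma \ref{fromitojwithout2p} propagates them upward to $2p-1$ variables of type $P^\tau$, and Lemma \ref{p^n=2} finishes with a variable of type $P^{\tau+1}$. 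The auxiliary $2p-2$ variables of type $E^l$ needed at intermediate levels $l \in \{2,\dots,\tau-1\}$ are supplied by the cumulative $p$-normalisation inequalities, each of which forces $m_l$ to be sizeable whenever the earlier counts fall short.

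Several subcases are unavoidable. If a single colour has $I_\nu^1$ above the threshold required by Lemma \ref{hinterlassen}, I would instead contract through that lemma to produce $p-1$ variables of type $E_\nu^\tau$ together with $2p-2$ variables of type $E_\nu^i$ at each intermediate level, and apply Lemma \ref{ziel} with $m$ adjusted to match the $p^\tau-p^{\tau-1}$ count of $P^1$ variables, possibly after producing one or two more $P^1$ variables from the leftover $E^0$ variables. If instead $q_1$ dominates $m_1$, the route through Lemma \ref{x-y-Lemmaneu} gives the cleanest result. If both $I_\mx^1$ and $q_1$ are small enough that $m_1$ is forced below these thresholds, then $m_0+m_1+m_2 \ge 6p^{\tau+1}-6p^\tau+1$ forces $m_2$ to be large, and I would repeat the argument one level higher; for small $\tau$ (notably $\tau=2,3$) the recursion must terminate earlier, typically by invoking Lemma \ref{neu} with $x=1$ at the top.

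The main obstacle I expect is the boundary bookkeeping: when $q_0$ sits near its upper bound $p^{\tau+1}-1$, Lemma \ref{5.1} produces only $p^\tau-1$ variables of type $P^1$, exactly one short of the count compatible with a uniform $m=0$ application of Lemma \ref{ziel}. Making up this single-variable deficit from the $E^1$ pool requires Lemma \ref{x-y-Lemmaneu} to apply with very specific colour conditions, namely enough variables of some type $E_{\bar\nu}^1$ matching a chosen $\nu$. Verifying the existence of such a colour $\nu$ against the upper bounds on $I_\mx^1$ that were not yet strong enough to trigger the Lemma \ref{hinterlassen} route is the delicate step; the contradiction it ultimately produces against the $p$-normalisation inequalities should mirror the argument used for $r \ge 0$ in the preceding lemmata.
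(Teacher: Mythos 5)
Your skeleton matches the paper's (Lemma \ref{5.1} to get $p^\tau-p^{\tau-1}$ variables of type $P^1$, a boost through the level-$1$ pool via Lemma \ref{x-y-Lemmaneu}, a separate branch when one colour dominates level $1$, and a fallback on $m_2$ being large), but the two routes you describe for finishing do not close numerically, and this is exactly where the paper's proof has its real content. First, your main line ``Lemma \ref{x-y-Lemmaneu} gives roughly $2p^{\tau-1}$ variables of type $P^2$, then Lemma \ref{fromitojwithout2p} gives $2p-1$ variables of type $P^\tau$, then Lemma \ref{p^n=2}'' needs $2p^{\tau-1}+p^{\tau-2}-2$ variables of type $P^2$ (take $m=p+1$ in Lemma \ref{fromitojwithout2p}), whereas the guaranteed output of Lemma \ref{x-y-Lemmaneu} is capped by $q_1$: all you can prove here is $q_1\ge p^{\tau+1}-4p^\tau+4$, which for $p=5$ is only $p^\tau+4$, so the admissible $y$ is at most about $\sum_{i=0}^{\tau-1}p^i+1$ and the output is at most about $x+y=2p^{\tau-1}-p^{\tau-2}$ (the paper's choice $x=p^{\tau-1}-2p^{\tau-2}-\sum_{i=0}^{\tau-3}p^i-1$, $y=\sum_{i=0}^{\tau-1}p^i+1$, $z=p-2$ is tight against both $q_1\ge p^\tau+p-2$ and the $P^1$ supply). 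For $\tau\ge3$ you are therefore short by $2p^{\tau-2}-2$ variables of type $P^2$, and the chain through Lemma \ref{p^n=2} cannot be completed. The paper instead finishes with Lemma \ref{ziel} at $j=2$, $m=p-1$, which needs no level-$\tau$ variables but does need $2p-2$ single-coloured variables of type $E^i$ at every level $2\le i\le\tau-1$; producing these is the missing step in your plan. In the small-$I_{\textup{max}}^1$ case they come from a colour $\mu$ with $I_\mu^2$ above the threshold of Lemma \ref{2ptotau-1}, which is guaranteed because the caps $m_0\le 2p^{\tau+1}+p^\tau-3$ and $m_1\le(p+1)I_\mx^1$ force $m_2$ to be large --- this is a constructive use of the $p$-normalisation bound, not a contradiction as you anticipate.

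Second, your branch for a dominant colour at level $1$ is not workable as stated: Lemma \ref{ziel} only allows $0\le m\le p-1$, so at $j=1$ it needs at least $p^\tau$ variables of type $P^1$, and you have $p^\tau-p^{\tau-1}$; the deficit is $p^{\tau-1}$, not ``one or two'', and Lemma \ref{5.1} cannot supply more since its output is already capped by $\lfloor q_0/p\rfloor\le p^\tau-p^{\tau-1}$ when $r=-1$. The paper's treatment of this case (threshold $I_\mx^1\ge p^\tau+4p^{\tau-1}-\frac{p-1}{2}\sum_{i=1}^{\tau-2}p^i+3\sum_{i=0}^{\tau-3}p^i-p-4$, well below the threshold of Lemma \ref{hinterlassen} you invoke) first runs Lemma \ref{2ptotau-1} on part of the $E_\nu^1$ variables to create the $2p-2$ single-coloured blocks at levels $1,\dots,\tau-1$, keeps $p^\tau+p-2$ variables of type $E_\nu^1$, pairs them with $p^\tau+p-2$ variables of type $E_{\bar\nu}^1$ (available since $q_1\ge p^\tau+p-2$) and the $P^1$ variables via Lemma \ref{x-y-Lemma} to reach $2p^{\tau-1}-p^{\tau-2}$ variables of type $P^2$, and again closes with Lemma \ref{ziel} at $j=2$, $m=p-1$. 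So the overall strategy you chose is right, but both of your finishing moves fail on the exact counts, and the device that repairs them (ending at $j=2$ with $m=p-1$ and manufacturing the intermediate-level single-coloured blocks) is absent from your proposal.
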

	\begin{proof}
		Due to the upper bound for $I_0^0$ and $r=-1$ it follows that
		\begin{align}
			\label{o}
			m_0 \le p^{\tau+1} + p^\tau -2 + p^{\tau+1}-1 = 2 p^{\tau+1} + p^\tau -3
		\end{align}
		and hence,
		\begin{align}
		\label{i}
			q_1 \ge 3 p^{\tau+1}-3 p^{\tau} +1 -2 p^{\tau+1} - p^\tau +3 = p^{\tau+1} -4 p^{\tau} +4 \ge p^{\tau}+p-2
		\end{align}
		and	
		\begin{align}
		\label{ii}
			m_1\ge 4 p^{\tau+1}-4p^{\tau}+1 -2 p^{\tau+1} - p^\tau +3 = 2 p^{\tau+1} -5 p^\tau +4 \ge 2p^{\tau}+p^2-p-3.
		\end{align}
		Use Lemma \ref{5.1} to contract the $E^0$ variables to $p^{\tau}-p^{\tau-1}$ variables of type $P^1$. 
		
		If $I_\nu^1=I_\mx^1 \ge p^\tau+ 4 p^{\tau-1} - \frac{p-1}{2} \sum_{j=1}^{\tau-2} p^j+ 3 \sum_{j=0}^{\tau-3} p^j -p -4$, one can contract $4 p^{\tau-1} - \frac{p-1}{2} \sum_{i=1}^{\tau-2} p^i+ 3 \sum_{i=0}^{\tau-3} p^i -2p -2$ of the variables of type $E_\nu^1$ to $2p-2$ variables of type $E_\nu^j$ for all $j \in \{1, \dots, \tau-1\}$ using Lemma \ref{2ptotau-1}, which leaves $p^\tau+p-2$ variables of type $E_\nu^1$ unused.
        Then, Lemma \ref{x-y-Lemma} can be used with $x=p^{\tau-1} -2p^{\tau-2} - \sum_{i=0}^{\tau-3} p^i -1$, $y= \sum_{i=0}^{\tau-1} p^i+1$ and $z=p-2$ to contract the remaining $E_\nu^1$ variables together with the $p^{\tau}+p-2$ variables of type $E_{\bar \nu}^1$ and the $P^1$ variables to $2p^{\tau-1}-p^{\tau-2}$ variables of type $P^{2}$.
		Those and the $2p-2$ variables in $E_\nu^j$ for $j \in \{2, \dots, \tau-1\}$ can be contracted to a $P^{\tau+1}$ variable, using Lemma \ref{ziel}.
		
		Thus, one can furthermore assume that one has $ I_\nu^1 =I_\mx^1\le p^\tau+ 4 p^{\tau-1} - \frac{p-1}{2} \sum_{i=1}^{\tau-2} p^i+ 3 \sum_{i=0}^{\tau-3} p^i -p -5 $. It follows, that
		\begin{align}
		\label{oo}
			m_1 \le p^{\tau+1} + 5 p^{\tau} + 4 p^{\tau-1} +3 p^{\tau-2} + 6 \sum_{i=0}^{\tau-3} p^i.
		\end{align}
        Due to \eqref{i} and \eqref{ii}, one can use Lemma \ref{x-y-Lemmaneu} with $x= p^{\tau-1} -2p^{\tau-2} - \sum_{i=0}^{\tau-3}p^i-1$, $y=\sum_{i=0}^{\tau-1} p^i+1$ and $z=p-2$ to contract the $p^{\tau}-p^{\tau-1}$ variables of type $P^1$ and the $E^1$ variables to $2p^{\tau-1}-p^{\tau-2}$ variables of type $P^2$. For $\tau=2$, one can use Lemma \ref{p^n=2} to contract the $2p-1$ variables of type $P^2$ to a $P^3=P^{\tau+1}$ variable. Hence, one can assume that $\tau \ge 3$.
        As a consequence of \eqref{o} and \eqref{oo}, it follows that
        \begin{align*}
            m_2 &\ge 6p^{\tau+1} -6p^{\tau}+1 -2 p^{\tau+1} -p^\tau +3-p^{\tau+1} - 5 p^{\tau} - 4 p^{\tau-1} -3 p^{\tau-2} - 6 \sum_{i=0}^{\tau-3} p^i\\
            &=3p^{\tau+1} -12 p^{\tau} -4 p^{\tau-1} -3 p^{\tau-2} -6\sum_{i=0}^{\tau-3} p^i +4,
        \end{align*}
        which is bigger than $\left(p+1\right) \left(4 p^{\tau-2} - \frac{p-1}{2} \sum_{i=1}^{\tau-3} p^i+ 3 \sum_{i=0}^{\tau-4} p^i -2p -2\right)$. Hence, there is a $\mu$ such that $I_\mu^2 \ge 4 p^{\tau-2} - \frac{p-1}{2} \sum_{i=1}^{\tau-3} p^i+ 3 \sum_{i=0}^{\tau-4} p^i -2p -2$, thus, one can contract the $E_\mu^2$ variables using Lemma \ref{2ptotau-1} and then the resulting variables together with the $P^2$ variables to a $P^{\tau+1}$ variable, using Lemma \ref{ziel}.
    \end{proof}
	   It follows that for a proper $p$-normalised pair $f, g$ with $\tau \ge 2$ the equations $f=g=0$ have a non-trivial $p$-adic solution, which in combination with Section \ref{Pairs of forms with tau=1} proves the claim of the theorem.
    
    \bibliographystyle{plain}
	\bibliography{literatur}

\end{document}